\numberwithin{equation}{section}
\newtheorem{theorem}{Theorem}[section]
\newtheorem{lemma}[theorem]{Lemma}
\newtheorem{proposition}[theorem]{Proposition}
\newtheorem{corollary}[theorem]{Corollary}
\newtheorem{question}[theorem]{Question}
\theoremstyle{definition}
\newtheorem{example}[theorem]{Example}
\newtheorem{remark}[theorem]{Remark}
\newcommand{\be}{\begin{equation}}
\newcommand{\ee}{\end{equation}}
\newcommand{\bes}{\begin{equation*}}
\newcommand{\ees}{\end{equation*}}
\newcommand{\Rad}{\mathcal{R}}
\newcommand{\bA}{\mathbb{A}}
\newcommand{\bN}{\mathbb{N}}
\newcommand{\bQ}{\mathbb{Q}}
\newcommand{\bZ}{\mathbb{Z}}
\newcommand{\Gm}{\mathbb{G}_m}
\newcommand{\disc}{\operatorname{disc}}
\newcommand{\Ad}{\operatorname{Ad}}
\newcommand{\im}{\operatorname{im}}
\newcommand{\Gal}{\operatorname{Gal}}
\newcommand{\PGL}{\operatorname{PGL}}
\newcommand{\Br}{\operatorname{Br}}
\newcommand{\GL}[1]{\operatorname{GL}_{#1}}
\newcommand{\SL}{\operatorname{SL}}
\newcommand{\SO}{\operatorname{SO}}
\newcommand{\Spin}{\operatorname{Spin}}
\newcommand{\Sp}{\operatorname{Sp}}
\newcommand{\ind}{\operatorname{ind}}
\newcommand{\Inv}{\operatorname{Inv}}
\newcommand{\Char}{\operatorname{char}}
\begin{document}

\title{Reduction of Structure to Parabolic Subgroups}
\begin{abstract}
Let $G$ be an affine group over a field of characteristic not two. A $G$-torsor is called \emph{isotropic} if it admits reduction of structure to a proper parabolic subgroup of $G$. This definition generalizes isotropy of affine groups and involutions of central simple algebras. When does $G$ admit anisotropic torsors? Building on work of J. Tits, we answer this question for simple groups. We also give an answer for connected and semisimple $G$ under certain restrictions on its root system.

\end{abstract}

\author{Danny Ofek}
\address{
University of British Columbia\\
Vancouver\;
Canada}
\email{Dannyofe@student.ubc.ca}

%%%%%%%%%%%%%%%%%%%%%%%%

\maketitle

\tableofcontents

%%%%%%%%%%%%%%%%%%%%%%%%%%%%%
\section{Introduction}

\subsection{Motivation and context}
A linear algebraic group over a field $k$ is called \emph{isotropic} if it contains a non-trivial split torus, or equivalently a proper parabolic subgroup \cite[Corollary 4.17]{Borel1965}. This definition arises in the problem of classification of absolutely semisimple linear algebraic groups. Let $k_s$ be the separable closure of $k$. To every semisimple group $G$ over $k$ one associates a root system $\Delta$, an action of $\Gamma = \Gal(k_s/k)$ on $\Delta$ and a maximal split torus $S\subset G$. One defines the anisotropic kernel of $G$, denoted $G_0$, as the derived subgroup of the centralizer of $S$ \cite{Tits1966}. These determine $G$ up to isogeny \cite[Theorem 2]{Tits1966}. $G$ is called \emph{quasi-split} if $G_0 = \{1\}$, or equivalently if it contains a Borel subgroup defined over $k$ \cite[Proposition 16.2.2]{Springer1998}. Every semisimple group is an inner form of a unique quasi-split group \cite[Proposition 1.10]{Merkurjev1996}. Using this J. Tits proved an existence theorem which essentially reduces the classification of semisimple $k$-groups to classifying all anisotropic forms ${}_\gamma G$ where $G$ is a semisimple quasisplit group and $[\gamma]\in H^1(k,G)$ \cite[Proposition 4]{Tits1966}. This classification is intractable in full generality, for example if we take $G = \PGL_n$ it is equivalent to classifying all division algebras of degree $n$. The simplest question one could ask about the class of all anisotropic forms in $\{{}_\gamma G\mid [\gamma]\in H^1(k,G)\}$ is whether any exist at all.

\begin{question}\label{Q1}
Which quasi-split simple groups $G$ over $k$ admit an anisotropic form ${}_\gamma G$ with $[\gamma] \in H^1(K,G)$ for some field extension $k \subset K$?
\end{question}

In \cite{Tits1990} Tits answered this question for a split and simply connected $G$. The present paper was motivated by his work and our main results are generalizations of \cite[4.4.1, Theorem 2]{Tits1990}. The following definition will be fundamental. We call a torsor in $H^1(k,G)$ \emph{isotropic} if it admits reduction of structure to some proper parabolic subgroup $P\subset G$.
When $G$ is the automorphism group of some "algebraic structure", torsors in $H^1(k,G)$ correspond to forms of that structure (see \cite[Chapter III, Section 1]{Serre1997} for details). Through this correspondence isotropy of torsors generally gains a concrete and intuitive meaning. Two instructive examples are when the algebraic structure is a central simple algebra or a quadratic form. These are worked out in \cref{criterion for isotropy An} and \cref{criterion for isotropy D5} respectively.
We say a linear algebraic group $G$ is \emph{strongly isotropic} if all its torsors over any field extension are isotropic. If $G$ is quasisplit a torsor $[\gamma]\in H^1(K,G)$ is isotropic if and only if ${}_\gamma G$ is isotropic. Therefore the following is a generalization of \cref{Q1}.
\begin{question}\label{Q2}
What are the strongly isotropic connected reductive linear algebraic groups over $k$?
\end{question}
This is the main question we will explore in this paper. It can be partially reduced to the case of semisimple groups, see \cref{reduction to semisimple} and \cref{Conditions on Radical}. 

\subsection{Main results}
In \cref{Classification of simple strongly isotropic algebraic groups} we prove the following classification theorem for simple strongly isotropic groups. 

\begin{theorem}\label{simple strongly isotropic}
Let $k$ be a field with $\Char(k)\neq 2$. The simple strongly isotropic groups over $k$ are $\Sp_{2n}, \Spin(q)$  where $q$ is a ten dimensional regular quadratic form with trivial discriminant and split Clifford algebra and $\SL_n(D)/\mu_d$ where $D$ is a central division algebra over $k$ and $n,d$ are natural numbers such that $n>1$, $d\mid n\deg(D)$ and some prime divisor of $n$ does not divide $d$.
\end{theorem}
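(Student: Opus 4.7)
The plan is to perform a case analysis on the Killing--Cartan type of the simple group $G$, its Galois action on the Dynkin diagram, and its isogeny class. For each such case, I either exhibit an anisotropic twist ${}_\gamma G$ over some field extension $K/k$, which rules out strong isotropy, or prove that no twist over any extension is anisotropic, which establishes strong isotropy. The three families in the statement will arise from three distinct mechanisms: vanishing of $H^1$ for $\Sp_{2n}$; $p$-adic subadditivity of the index of Brauer classes for $\SL_n(D)/\mu_d$; and Pfister's theorem on $10$-dimensional forms in $I^3$ (the third power of the fundamental ideal of the Witt ring) for the spin case.

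For type $C$, the identity $H^1(K,\Sp_{2n})=1$ for every field $K$ (non-degenerate alternating forms are unique) immediately gives strong isotropy of $\Sp_{2n}$; the adjoint $\Sp_{2n}/\mu_2$ fails, since a central division algebra with symplectic involution produces an anisotropic twist. For inner type $A$, I would identify twists of $\SL_n(D)/\mu_d$ over $K$ with central simple algebras $A'$ of degree $n\deg(D)$ satisfying $[A']-[D_K]\in\Br(K)[d]$, so that isotropy of the twist is equivalent to $A'$ not being a division algebra. The subadditivity inequality $v_p(\ind(\alpha+\beta))\le v_p(\ind\alpha)+v_p(\ind\beta)$ then yields: if some prime divisor $p$ of $n$ does not divide $d$, then $v_p(\ind A')\le v_p(\deg D)<v_p(\deg A')$, forcing $A'$ to be non-division and the twist to be isotropic. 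Conversely, when every prime divisor of $n$ divides $d$, I would construct via iterated Severi--Brauer function fields a division algebra of degree $n\deg(D)$ realizing the prescribed Brauer-class difference, producing an anisotropic twist. Outer type $A$ (unitary groups) is ruled out by exhibiting anisotropic Hermitian forms over quadratic extensions.

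For types $B$ and $D$ outside the special $D_5$ case, generic anisotropic quadratic forms and division algebras with orthogonal involutions provide anisotropic twists. For $\Spin(q)$ in type $D_5$ with $q$ $10$-dimensional of trivial discriminant and split Clifford algebra, I would analyze the exact sequence $1\to\mu_2\to\Spin(q)\to\SO(q)\to 1$: the image of $H^1(K,\Spin(q))$ inside $H^1(K,\SO(q))$ consists of $10$-dimensional forms $q'$ whose discriminant and Clifford invariant match those of $q$; since both are trivial, this forces $q'\in I^3$. Pfister's theorem that every $10$-dimensional form in $I^3$ is isotropic then gives strong isotropy of $\Spin(q)$. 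For exceptional types ($G_2$, $F_4$, $E_6$, $E_7$, $E_8$), I would follow Tits' 1990 constructions and build anisotropic twists from generic octonion algebras, first Tits constructions for Albert algebras, and analogous generic structures for the $E$-series, ruling these out.

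The main obstacle is the $D_5$ case: one must precisely identify the boundary $H^1(K,\SO(q))\to H^2(K,\mu_2)$ with (a variant of) the Clifford invariant so that forms of $\Spin(q)$ correspond exactly to $10$-dimensional forms in $I^3$, and then invoke the non-trivial Pfister theorem that no anisotropic form in $I^3$ has dimension $10$. A secondary difficulty is the converse direction in inner type $A$: constructing a division algebra of degree $n\deg(D)$ realizing the required Brauer class over an appropriately generic field demands a Merkurjev-style generic-splitting argument that simultaneously maximizes the index and controls the Brauer-class difference from $[D_K]$.
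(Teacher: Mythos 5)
Your overall strategy is the same as the paper's: reduce via Tits' theorem to types $A$, $C$ and $D_5$, translate torsors into algebras with involution and quadratic forms, and decide isotropy by generic constructions; the $D_5$ positive direction does indeed come down to Pfister's theorem on $10$-dimensional forms in $I^3$ (the paper packages this as ``$\Spin(q)$ is a strong inner form of $\Spin_{10}$'' plus Tits' result). However, there is a concrete error in your inner type $A$ case. For $G=\SL_n(D)/\mu_d$ with $D$ non-split, isotropy of the twist corresponding to $A'$ is \emph{not} equivalent to $A'$ failing to be a division algebra. The parabolic subgroups of $\PGL_n(D)$ only see right ideals of $M_n(D)$ of reduced dimension divisible by $\ind(D)$, so the correct criterion (\cref{criterion for isotropy An}) is that the twist is anisotropic iff $\ind(A')=\gcd(\ind(A'),\ind(D))\,n$, equivalently $\operatorname{lcm}(\ind(A'),\ind(D))=\deg(A')$. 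For example with $n=2$ and $\deg(D)=3$, an $A'$ of index $2$ is far from division yet gives an anisotropic torsor. Your $p$-adic estimate $v_p(\ind A')\le v_p(\ind D_K)$ is actually strong enough to verify the \emph{correct} criterion at the prime $p$, so the forward direction is repairable, but as written the step ``non-division, hence isotropic'' is invalid.

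The second gap is that the hardest constructions are not supplied, and one of them is not even identified. For the type $A$ converse you at least name the needed generic-splitting construction (the paper's \cref{existence of division}: a division algebra $\epsilon$ of index $n$ and exponent $\mathrm{rad}(n)$ with $D_K\otimes\epsilon$ still division). But for type $D_5$ you only flag the positive direction as the obstacle, whereas the paper's real technical work there is the \emph{negative} direction: showing that every $D_5$ form other than $\Spin(q)$ with trivial discriminant and split Clifford algebra admits an anisotropic torsor. This requires producing anisotropic $10$-dimensional forms with a prescribed \emph{non-trivial} Witt invariant (via Albert forms of biquaternion algebras, \cref{existence split isotropic clifford}) and with a prescribed non-trivial discriminant (via discriminant algebras of anisotropic unitary involutions, \cref{existence of anisotropic disc}), together with an index-reduction argument to pass from exponent-$2$ algebras to quaternions. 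Lumping this under ``generic anisotropic quadratic forms provide anisotropic twists'' for types $B$ and $D$ misses exactly the delicate case; a random generic anisotropic form will not have the Witt invariant and discriminant required to lie in the image of $H^1(K,\Spin)\to H^1(K,\SO)$ for the given form.
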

The proof relies heavily on known classification results. We first explain how to use \cite{Tits1990} to reduce the problem to groups of type $A_n,C_n$ or $D_5$. In each of these cases there  are algebraic structures classified by $H^1(k,\overline{G})$. Using this correspondence we get a notion of isotropy for these algebraic structures coming from isotropy of torsors in $H^1(k,\overline{G})$. We then construct appropriate "anisotropic" algebraic structures over field extensions to prove the existence of anisotropic torsors.  The index reduction formulas of A. S. Merkurjev, I. A. Panin, and A. R. Wadsworth are a central component in our constructions. \par 

The problem of classifying strongly isotropic semisimple groups is more involved. In \cref{semisimple section} we will prove the following partial classification.
\begin{theorem}\label{strongly isotropic quotient}
Let $G$ be a semisimple group over a field $k$ with $\Char(k)\neq 2$. Assume for any simple factor of $G$ of type $A_{n-1}$ the integer $n$ is squarefree. Then $G$ is strongly isotropic if and only if it admits a simple strongly isotropic quotient.
\end{theorem}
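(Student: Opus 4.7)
The plan is to prove both directions of the biconditional; the reverse is essentially formal, while the forward direction carries the analytic content.

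For $(\Leftarrow)$: Suppose $\pi\colon G\twoheadrightarrow H$ is a surjection with $H$ simple and strongly isotropic. For any field extension $K/k$ and any $[\gamma]\in H^1(K,G)$, push forward to $[\delta]:=\pi_{*}[\gamma]\in H^1(K,H)$. By strong isotropy, $[\delta]$ reduces to some proper parabolic $Q\subset H$. Set $P:=\pi^{-1}(Q)\subset G$; then $P$ is a proper parabolic of $G$, and the canonical $G$-equivariant identification $G/P\cong H/Q$ (with $G$-action on $H/Q$ factoring through $\pi$) yields ${}_\gamma(G/P)\cong {}_\delta(H/Q)$. A $K$-point of the latter, supplied by the reduction of $[\delta]$, gives the required reduction of $[\gamma]$ to $P$.

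For $(\Rightarrow)$: I argue the contrapositive. Let $\tilde G\to G$ be the simply connected cover and write $\tilde G=G_1\times\cdots\times G_r$ with each $G_i$ absolutely almost simple, simply connected. Then $G=\tilde G/N$ for some central $N\subseteq Z(\tilde G)=\prod_i Z(G_i)$, and setting $N_i\subseteq Z(G_i)$ to be the projection of $N$, every simple quotient of $G$ is of the form $G_i/M_i$ for some central $M_i\supseteq N_i$. In particular each $G_i/N_i$ is a simple quotient, so by assumption none of them is strongly isotropic.

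By \cref{simple strongly isotropic} and the squarefree hypothesis on type $A_{n-1}$ factors, each $G_i/N_i$ lies in a definite non-strongly-isotropic isogeny class. The constructions of \cref{Classification of simple strongly isotropic algebraic groups} (generic division algebras, generic quadratic forms, function fields of Severi--Brauer and quadric varieties, and the Merkurjev--Panin--Wadsworth index reduction formulas) provide for each $i$ a field extension $K_i/k$ together with an anisotropic class $\alpha_i\in H^1(K_i,G_i/N_i)$. Taking a suitable iterated function field tower, I obtain a common extension $K/k$ over which every $\alpha_i$ remains anisotropic. Writing $G':=\prod_{i=1}^r G_i/N_i$, the tuple $(\alpha_i)$ assembles into an anisotropic class $\alpha'\in H^1(K,G')$.

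It remains to lift $\alpha'$ along the central isogeny $G\twoheadrightarrow G'$ with kernel $\mu:=(\prod_i N_i)/N$: the obstruction lies in $H^2(K,\mu)$, and I plan to kill it after a further field extension $K'/K$ chosen so that anisotropy of each $\alpha_i$ is preserved. The resulting lift $\alpha\in H^1(K',G)$ is anisotropic, because any proper parabolic $P\subset G$ pulls back to a tuple $(P_i)$ of parabolics of the $G_i$ with at least one $P_i\subsetneq G_i$, and a reduction of $\alpha$ to $P$ would force the $i$-th component of $\alpha'$ to reduce to a proper parabolic of $G_i/N_i$, contradicting anisotropy. The main obstacle is precisely this lifting step: killing the $H^2(K,\mu)$-obstruction without collapsing the Schur index of an underlying division algebra and thereby destroying anisotropy. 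The squarefree assumption on $n$ for type $A_{n-1}$ is critical here, as it controls the $2$-cocycle in a way that permits enlargement of $K$ without trivializing the relevant Brauer classes; I expect this to form the technical heart of the proof.
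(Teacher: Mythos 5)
Your reverse direction is correct and is exactly the paper's argument (\cref{covering thm1}). The forward direction, however, has a genuine gap at the step you yourself flag as the ``technical heart'': lifting the anisotropic class $\alpha'\in H^1(K,G')$, $G'=\prod_i G_i/N_i$, along $G\twoheadrightarrow G'$ while preserving anisotropy. This is not a routine technicality to be deferred --- it is where the entire content of the theorem sits, and your proposed mechanism (kill the obstruction in $H^2(K,\mu)$ by a further field extension) is exactly the step that fails in general. \cref{example no quotient} is the cautionary tale: for $G=\SL_p\times\SL_{p^2}/\Delta(\mu_p)$ both simple quotients $\PGL_p$ and $\SL_{p^2}/\mu_p$ admit anisotropic torsors, yet $G$ is strongly isotropic, because the compatibility condition for membership in the image of $H^1(K,G)\to H^1(K,G')$ forces a relation between the underlying Brauer classes which, by index reduction, destroys the division property. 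So any completion of your argument must show that under the squarefree hypothesis one can impose the lifting constraint without collapsing indices; this requires the index reduction formulas of Schofield--Van den Bergh / Merkurjev--Panin--Wadsworth applied to function fields of products of Severi--Brauer varieties of the relevant tensor products (this is essentially the computation carried out in the proof of \cref{Strongly isotropic GLn's}), and no such argument appears in your sketch. A secondary issue: generically splitting the $H^2(K,\mu)$ obstruction is not even the right framing, since the liftable torsors form a constrained subset and one must construct a liftable anisotropic torsor directly rather than lift an arbitrary one.

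The paper's proof runs in the opposite direction and avoids this construction entirely. Starting from the assumption that $G$ \emph{is} strongly isotropic, it takes a versal $G$-torsor $U$, uses \cref{versal torsors} and \cref{products and strong isotropy} to reduce structure to a maximal parabolic supported in a single simple factor $\tilde G_1$, and deduces (\cref{P strongly isotropic}) that the twist ${}_{\gamma_1}\tilde G_1$ is strongly isotropic. It then shows that the Brauer invariants of $\overline{G_1}$-torsors which detect anisotropy (the class $[A]$ in type $C_n$, the components of the Clifford algebra in type $D_5$, the classes $([A]-[D])^{n/p}$ in type $A_{n-1}$) vanish on the image of $H^1(\ast,G_1)$, by combining injectivity of $\pi_1^\#:\Inv(G_1,\Br)\to\Inv(G,\Br)$ (\cref{Br invariants 1}) with injectivity of evaluation at the versal torsor (\cref{Br invariants 2}). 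The squarefree hypothesis enters only to guarantee that, in type $A_{n-1}$, isotropy is detected by the vanishing of a single such invariant. If you want to salvage your contrapositive approach, you would essentially have to reprove the type-$A$ computation of \cref{Strongly isotropic GLn's}; the invariant-theoretic route is what lets the paper bypass it.
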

Note that \cref{strongly isotropic quotient} fails if we allow simple factors of type $A_{n-1}$ when $n$ is not square-free; see \cref{example no quotient}. We classify strongly isotropic split semisimple groups of type $A$ in \cref{Strongly isotropic GLn's}.

\subsection{Notational conventions} All connected linear algebraic groups are assumed to be smooth. A morphism of algebraic groups $f:G\to H$ is surjective if the induced morphism of abstract groups $G(\overline{k})\to H(\overline{k})$ is surjective. We call $f$ a quotient morphism if it is  separable as a morphism of varieties and surjective. We denote by $\Ad:G\to \overline{G}$ the natural projection onto the adjoint group when $G$ is reductive.

For any field $k\subset K$ we denote by $H^1(K,G)= H^1(\Gal(K_s/K),G(K_s))$ the first non-abelian cohomology of $G$ over $K$. We usually denote cohomology classes by $[\gamma]$ where $\gamma$ is a cocycle in $G$. We say a torsor $[\gamma]\in H^1(K,G)$ \emph{admits reduction of structure} to a $K$-subgroup $H\subset G_K$ if $[\gamma]$ lies in the image of the natural morphism $H^1(K,H)\to H^1(K,G)$. If $H$ is defined over $k$ and $H^1(K,H)\to H^1(K,G)$ is surjective for all field extensions $k\subset K$ we say $G$ \emph{admits reduction of structure} to $H$. We denote the Brauer equivalence class of a central simple algebra over $k$ by $[A]\in \Br(k)$. Additive notation is used for Brauer groups.

\subsection{Acknowledgements}
I would like to thank my supervisor Zinovy Reichstein for introducing me to \cref{Q2} and for guiding me through this project.
%%%%%%%%%%%%%%%%%%%%%%%%%%%%%%%%%%%%%%%%%%%%%%%%%%%%%%%%%%%%%%%%%%%%%%%%%%%%%%%%%%%%%

\section{A theorem of Jacques Tits}
In this section we explain how to deduce the following theorem from \cite{Tits1990}. 

\begin{theorem}\label{simply connected split groups}
The simply connected split strongly isotropic simple groups over a field $k$ with $\Char(k)\neq 2$ are exactly $\SL_{n},\Sp_{2n}$ and $\Spin_{10}$.
\end{theorem}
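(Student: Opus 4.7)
The plan is to derive this classification directly from Tits's theorem in \cite[4.4.1, Theorem 2]{Tits1990} via the correspondence between strong isotropy and the non-existence of anisotropic twisted forms. I would begin by recording the translation. Since $G$ is split, it is in particular quasi-split, so by the observation preceding \cref{Q2} a torsor $[\gamma]\in H^1(K,G)$ is isotropic if and only if the twisted group ${}_\gamma G$ is isotropic over $K$. Hence $G$ is strongly isotropic precisely when every form ${}_\gamma G$, over every extension $K/k$, is isotropic, which is exactly the question treated in Tits's paper.

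The groups $\SL_n$ and $\Sp_{2n}$ are then handled by a trivial observation: both are special groups in the sense of Serre. Hilbert~90 yields $H^1(K,\SL_n)=1$, and $\Sp_{2n}$-torsors classify non-degenerate alternating forms of a fixed dimension, all of which are hyperbolic, so $H^1(K,\Sp_{2n})=1$ as well. The only torsor in either case is the trivial one, whose twist is the original split (hence isotropic) group, so these two families are strongly isotropic for free.

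For the remaining simply connected split simple types I would invoke Tits's classification. He shows that $\Spin_{10}$ is the unique further group all of whose twisted forms over all extensions are isotropic, and for each other type, namely the spin groups $\Spin_n$ with $n\neq 10$ and the exceptional types $E_6,E_7,E_8,F_4,G_2$, he exhibits a field extension $K/k$ and a cocycle in $H^1(K,G)$ producing an anisotropic twist. This is typically done by constructing an anisotropic instance of the algebraic structure classified by $H^1(-,G)$: a suitable anisotropic quadratic form for the spin groups, an anisotropic octonion algebra for $G_2$, an anisotropic Albert algebra for $F_4$, and so on. Matching this list against the previous paragraph yields exactly $\SL_n$, $\Sp_{2n}$ and $\Spin_{10}$.

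The main obstacle is the case of $\Spin_{10}$: one must verify that \emph{every} ten-dimensional quadratic form with trivial discriminant and split Clifford invariant (the data classified, up to central isogeny, by $H^1(-,\Spin_{10})$) produces an isotropic spin group. This is the heart of Tits's argument and rests on the $D_5$-triality phenomenon together with delicate information about quadratic forms in dimension ten; it is precisely what distinguishes $\Spin_{10}$ from the other spin groups. Once this input is imported from \cite{Tits1990}, the remaining steps — the easy cases above and the construction of anisotropic forms for the other types — amount to routine verification.
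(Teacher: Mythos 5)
Your overall strategy coincides with the paper's: translate strong isotropy of a quasi-split $G$ into the non-existence of anisotropic strongly inner forms over extensions, and then appeal to \cite[4.4.1, Theorem 2]{Tits1990}. The easy half is fine ($H^1(K,\SL_n)=H^1(K,\Sp_{2n})=1$, and for $\Spin_{10}$ the fact that a ten-dimensional form with trivial discriminant and trivial Witt invariant lies in $I^3$ and hence is isotropic).

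The gap is in the other direction. Tits proves his theorem over the base field $\bQ$, while the statement here concerns an arbitrary field $k$ with $\Char(k)\neq 2$. The existence of an anisotropic strongly inner form of each remaining type over some extension of $\bQ$ does not formally yield the same over an extension of an arbitrary $k$: the anisotropic objects Tits needs must be constructed generically, over extensions of $k$. This is precisely the content the paper adds to Tits's argument. For types $E_6$ and $E_7$ one must produce, over a field extension of $k$, central division algebras of index $9$ and of index $16$ and period $4$; the paper does this via \cref{existence of division}. For types $B_n$ and $D_n$ ($n\neq 5$) one needs, over an extension of $k$, anisotropic quadratic forms of every dimension $d\geq 7$, $d\neq 10$, with trivial discriminant and Witt invariant, and one must check that Tits's construction of these is field-independent. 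Your proposal folds all of this into ``routine verification imported from Tits,'' which skips the only part of the proof that is not a citation. A further small point: there is no triality for $D_5$ (triality is a $D_4$ phenomenon); the isotropy of ten-dimensional forms with trivial discriminant and split Clifford invariant is pure quadratic form theory (anisotropic forms in $I^3$ cannot have dimension $10$), not a triality argument.
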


We start by stating the connection between isotropic groups and isotropic torsors. Let $G$ be a linear algebraic group over a field $k$ with $\Char(k)\neq 2$.

\begin{lemma}\label{isotropy and flags}
Let $[\gamma]\in H^1(k,G)$ be a torsor and $P\subset G$ a parabolic subgroup. The following are equivalent:
\begin{enumerate}
    \item $[\gamma]$ admits reduction of structure to $P$.
    \item $_\gamma G$ has a parabolic $P'$ of the same type as $P$ (i.e. conjugate to it over $\overline{k}$) which is defined over $k$.
    \item The variety $ {}_\gamma (G / P)$ has a $k$-rational point.
\end{enumerate}
In particular if $[\gamma]$ is isotropic then ${}_\gamma G$ is isotropic. When $G$ is quasisplit the converse implication holds as well.
\end{lemma}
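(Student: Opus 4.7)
The plan is to derive the three equivalences from standard facts about twisting homogeneous spaces in Galois cohomology, and then deduce the ``in particular'' statement from the equivalence $(1)\iff(2)$.

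For $(1)\iff (3)$, I would invoke the general principle (Serre, \emph{Galois Cohomology}, Chapter I, \S 5, Proposition 37): for any $k$-subgroup $H \subset G$ with $G/H$ representable as a quasi-projective $k$-variety, a class $[\gamma] \in H^1(k,G)$ lies in the image of $H^1(k,H) \to H^1(k,G)$ if and only if the twisted homogeneous space ${}_\gamma (G/H)$ has a $k$-rational point. Applied with $H = P$ parabolic, this gives $(1)\iff (3)$ verbatim.

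For $(2)\iff (3)$, I would use the well-known identification of $G/P$ with the projective variety $\operatorname{Par}_P(G)$ parametrizing parabolic subgroups of $G$ of the same type as $P$, via the map $gP \mapsto g P g^{-1}$. Since the cocycle $\gamma$ acts via inner automorphisms of $G$, twisting carries this identification to one of ${}_\gamma(G/P)$ with the variety of parabolics of type $P$ in the twisted group ${}_\gamma G$. A $k$-point on the latter is by definition a parabolic subgroup of ${}_\gamma G$ of type $P$ defined over $k$, giving the desired equivalence.

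For the ``in particular'' part, the forward direction is immediate: if $[\gamma]$ admits reduction of structure to a proper parabolic $P \subsetneq G$, then by $(2)$ the group ${}_\gamma G$ has a proper parabolic of type $P$ defined over $k$, so ${}_\gamma G$ is isotropic. For the converse under the hypothesis that $G$ is quasisplit, suppose ${}_\gamma G$ has a proper parabolic $P'$ defined over $k$. Since $G$ is quasisplit, every conjugacy class of parabolic subgroups of $G_{\bar k}$ contains a representative defined over $k$; in particular there is a proper $k$-parabolic $P \subset G$ of the same type as $P'$. Then $(2)$ furnishes reduction of structure of $[\gamma]$ to $P$, so $[\gamma]$ is isotropic.

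I expect no serious obstacle in this argument, since it is essentially a bookkeeping exercise translating Serre's general twisting principle to the specific case of parabolic subgroups. The only subtle point is being careful about ``type'': conjugacy of parabolics in $G_{\bar k}$ is what matches naturally under twisting, and the quasisplit hypothesis is precisely what guarantees that every such conjugacy class is realized by a $k$-defined parabolic, which is the missing ingredient in the converse direction.
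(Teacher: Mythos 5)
Your argument follows essentially the same route as the paper: $(1)\iff(3)$ is exactly Serre's Proposition 37 on twisted homogeneous spaces, and your identification of $G/P$ with the variety of parabolics of type $P$ (valid because $P$ is self-normalizing) is the content of the paper's citation of Serre, III.2.2, Lemma~1 for $(1)\iff(2)$. One claim in your converse direction is, however, too strong as stated: for a quasisplit $G$ it is \emph{not} true that every conjugacy class of parabolic subgroups of $G_{\overline{k}}$ contains a representative defined over $k$; only the classes whose type is stable under the $*$-action of $\Gal(k_s/k)$ on the Dynkin diagram are so realized. For instance, for a quasi-split outer form of type $A_2$ (a quasi-split special unitary group) the two classes of maximal parabolics are swapped by the Galois action and neither contains a $k$-defined member. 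Your argument survives because the parabolic $P'\subset {}_\gamma G$ you start from is defined over $k$ and ${}_\gamma G$ is an \emph{inner} twist of $G$, hence carries the same $*$-action; the type of $P'$ is therefore automatically $*$-stable, and quasisplitness then does supply a proper $k$-parabolic of $G$ of that type (the standard parabolic of the $k$-Borel corresponding to that stable subset of simple roots). With that correction the proof is complete.
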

\begin{proof}
The implications $(1)\iff (2)$ follows from \cite[III.2.2, Lemma 1]{Serre1997} and the fact that $P$ is self normalizing. The implication $(1)\iff (3)$ follows from \cite[Chapter I, Proposition 37]{Serre1997}.
\end{proof}

Assume $G$ is simply connected, split and simple. The groups ${}_\gamma G$, where $[\gamma]\in H^1(k,G)$ are what Tits defined as \emph{strongly inner forms} of $G$. \cref{isotropy and flags} shows $G$ is strongly isotropic if and only if every strongly inner form of $G$ is isotropic over any field extension. Therefore \cref{simply connected split groups} is equivalent to the following proposition.

\begin{proposition}\label{anisotropic inner forms}
A split simple simply connected group $G$ admits an anisotropic strongly inner form over a field extension if and only if it is not of type $A_n$, $C_n$ or $D_5$.
\end{proposition}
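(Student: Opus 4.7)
The plan is to read the proposition as a classification and match it up, type-by-type, with \cite[4.4.1, Theorem 2]{Tits1990}. The nontrivial \emph{if} direction is exactly that theorem: for every split simply connected simple group $G$ not of type $A_n$, $C_n$ or $D_5$, Tits produces a field extension $k \subset K$ and a class $[\gamma] \in H^1(K,G)$ with ${}_\gamma G$ anisotropic. The only work in this direction is to translate between Tits' language of indices and admissible diagrams and the torsor-theoretic language of the present paper; by \cref{isotropy and flags}, anisotropy of ${}_\gamma G$ is the same as anisotropy of the torsor $[\gamma]$.

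For the \emph{only if} direction, my plan is to verify in each of the three excluded types that every strongly inner form is isotropic over every field extension. In type $A_n$ the group is $\SL_{n+1}$, and the short exact sequence
\[
1 \longrightarrow \SL_{n+1} \longrightarrow \GL_{n+1} \overset{\det}{\longrightarrow} \Gm \longrightarrow 1
\]
combined with Hilbert 90 gives $H^1(K,\SL_{n+1}) = 1$ for every field extension $k \subset K$. In type $C_n$ the group is $\Sp_{2n}$, and $H^1(K,\Sp_{2n})$ classifies non-degenerate symplectic forms of rank $2n$ over $K$, of which there is a unique isomorphism class. In both cases the only strongly inner form is the split group itself, which is manifestly isotropic. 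For $D_5$ the torsor set is nontrivial: $H^1(K,\Spin_{10})$ parametrizes $10$-dimensional regular quadratic forms with trivial discriminant and split Clifford algebra. I would appeal to \cite[4.4.1]{Tits1990} (equivalently, to the fact that dimension $10$ is absent from the set of dimensions of anisotropic forms in $I^3(K)$) to conclude that every such form, and hence its $\Spin$, is isotropic.

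The main obstacle is the $D_5$ case. For $A_n$ and $C_n$ the torsor sets simply vanish, and for the types outside the list one has concrete constructions -- division algebras, generic Pfister forms, Albert algebras, and so on -- that exhibit anisotropic strongly inner forms. By contrast, $D_5$ is a genuine coincidence requiring a real theorem: either \cite[4.4.1]{Tits1990} directly, or the classification of admissible Tits indices of type $^1 D_5$ in \cite{Tits1966}, which rules out an index with no circled vertex for a strongly inner form. Once this point is in hand, the proposition follows by assembling the three case-by-case verifications above with \cite[4.4.1, Theorem 2]{Tits1990}.
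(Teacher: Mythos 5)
Your handling of the three excluded types is correct and is actually more self-contained than the paper's treatment of that direction: the paper simply quotes \cite[4.4.1, Theorem 2]{Tits1990} for the isotropy of all strongly inner forms of types $A_n$, $C_n$ and $D_5$, whereas you verify it directly --- $H^1(K,\SL_{n+1})$ and $H^1(K,\Sp_{2n})$ vanish, and for $D_5$ the image of $H^1(K,\Spin_{10})\to H^1(K,\SO_{10})$ consists of $10$-dimensional forms in $I^3(K)$, which are isotropic by Pfister's gap theorem. That last point is, as you note, the one genuine coincidence, and your argument for it is sound (via \cref{isotropy and flags}, isotropy of $\Spin(q')$ reduces to isotropy of $q'$).

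The gap is in the direction you treat as a pure citation. As the paper points out, Tits carries out his constructions of anisotropic strongly inner forms over extensions of $\bQ$; the proposition here is asserted over an arbitrary field $k$ with $\Char(k)\neq 2$, and an anisotropic object over an extension of $\bQ$ gives nothing over an extension of $k$ (in positive characteristic there is no common overfield at all, and even in characteristic $0$ anisotropy is not preserved under passage to a compositum). So the ``only work'' in this direction is not translating between Tits' language and torsors --- that part is indeed routine via \cref{isotropy and flags} --- but rather redoing Tits' input data over extensions of the given $k$. Concretely: for $E_6$ and $E_7$ one needs, by \cite[Propositions 2,3]{Tits1990}, central division algebras of index $9$, and of index $16$ and period $4$, over some extension of $k$, which the paper obtains from the generic construction in \cref{existence of division}; for $B_n$ and $D_n$ with $n\neq 5$ one needs anisotropic quadratic forms of every dimension $d\geq 7$, $d\neq 10$, with trivial discriminant and Witt invariant over extensions of $k$ (here \cite[Proposition 6]{Tits1990} already works in the required generality); for $E_8$, $F_4$, $G_2$ the constructions at the start of \cite[Section 5]{Tits1990} go through over any base. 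Your proof should either supply these field-independent constructions or explicitly reduce to a version of Tits' theorem stated over arbitrary fields; as written, it proves the proposition only for $k=\bQ$.
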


\begin{proof}
The case, where the base field $k = \bQ$ is the field of rational numbers is proved in \cite{Tits1990}; see \cite[4.4.1, Theorem 2]{Tits1990}.  The same arguments work for arbitrary $k$ with only minor adjustments. For the sake of completeness, we spell out these minor adjustments below. \par If $G$ is of type $A_n$, $C_n$ or $D_5$ any strongly inner form of $G$ is isotropic by \cite[4.4.1, Theorem 2]{Tits1990}. We assume this is not the case and prove $G$ admits an anisotropic strongly inner form over a field extension of $k$. We split into cases according to the type of $G$.
\begin{itemize}
    \item[Type $E_6, E_7$:] By \cite[Propositions 2,3]{Tits1990} it suffices to construct a field extension $k\subset K$ such that $K$ admits central division algebras of index $9$ and central division algebras of index $16$ and period $4$. This follows for example from \cref{existence of division}.
    \item[Type $E_8,G_2, F_4$:] This is explained in the beginning of \cite[Section 5]{Tits1990}.
    \item[Type $D_n, B_n$:] By \cite[4.4.2]{Tits1990} it suffices to prove the existence of a $d$-dimensional anisotropic quadratic form $q$  over a field extension $K$ with trivial Witt invariant and discriminant for all $7\leq d\neq 10$ (see the paragraph preceding \cref{properties of Cliff invar} for the definition of the Witt invariant). This was done by Tits in \cite[Proposition 6]{Tits1990}.
\end{itemize}
\end{proof}

%%%%%%%%%%%%%%%%%%%%%%%%%%%%%%%%%%%%%%%%%%%%%%%%
\section{Preliminaries}\label{preliminaries}
${}$ \\ In this section we make a few elementary observations and prove some technical lemmas. We will use the next elementary fact repeatedly.

\begin{lemma}\label{useful lemma}
Let $N\subset H \subset G$ be algebraic groups over $k$ with $N$ normal in $G$. A torsor $[\gamma]\in H^1(k,G)$ admits reduction of structure to $H^1(k,H)$ if and only if $[\gamma N]\in H^1(k,G/N)$ admits reduction of structure to $H^1(k,H/N)$. 
\end{lemma}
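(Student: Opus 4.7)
The plan is to exploit the fact that $H \subset G$ is precisely the preimage of $H/N \subset G/N$ under the quotient map $G\to G/N$ (which holds because $N\subset H$), so that the square
\[
\begin{CD}
H @>>> G \\
@VVV @VVV \\
H/N @>>> G/N
\end{CD}
\]
is Cartesian. The forward direction will then follow from pure functoriality: naturality of $H^1$ in its group argument provides the commutative square of pointed sets one obtains by applying $H^1(k,-)$ to the diagram above, so any $[\eta]\in H^1(k,H)$ mapping to $[\gamma]$ descends to $[\eta N]\in H^1(k,H/N)$ mapping to $[\gamma N]$.

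For the converse I plan to argue at the level of cocycles. Fix a representing cocycle $\gamma_\sigma\in G(k_s)$ for $[\gamma]$ and a representing cocycle $\bar\eta_\sigma\in (H/N)(k_s)$ for the assumed lift. The hypothesis that $[\bar\eta]$ and $[\gamma N]$ coincide in $H^1(k,G/N)$ yields an element $\bar g\in (G/N)(k_s)$ with $\bar\eta_\sigma = \bar g\cdot(\gamma_\sigma N)\cdot({}^\sigma\bar g)^{-1}$ for all $\sigma\in\Gal(k_s/k)$. I will choose a lift $g\in G(k_s)$ of $\bar g$ and replace $\gamma_\sigma$ by the cohomologous cocycle $\gamma'_\sigma = g\,\gamma_\sigma\,({}^\sigma g)^{-1}$. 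By construction the image of $\gamma'_\sigma$ in $(G/N)(k_s)$ is $\bar\eta_\sigma\in (H/N)(k_s)$, so the Cartesian property forces $\gamma'_\sigma\in H(k_s)$. This produces the required class in $H^1(k,H)$ lifting $[\gamma]$.

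The only step that is not purely formal is the existence of a lift $g\in G(k_s)$ of $\bar g\in (G/N)(k_s)$, i.e.\ the surjectivity of $G(k_s)\to (G/N)(k_s)$. This I will deduce from the long exact sequence of pointed sets associated to $1\to N\to G\to G/N\to 1$ over $k_s$: the continuation is $H^1(\Gal(k_s/k_s),N(k_s))$, and this vanishes because the Galois group is trivial. Under the smoothness assumptions in force in the paper the sequence is indeed exact on $k_s$-points, so no deeper obstacle arises, and the plan goes through.
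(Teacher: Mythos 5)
Your argument is correct and essentially identical to the paper's: the forward direction is the same functoriality square, and the converse is the same cocycle computation, using that $H$ is the full preimage of $H/N$ (since $N\subset H$) to conclude that the conjugated cocycle lands in $H(k_s)$. The only difference is that you make explicit the lifting of $\bar g$ to $g\in G(k_s)$, which the paper leaves implicit; just note that justifying this surjectivity via the exact cohomology sequence over $k_s$ is mildly circular (that sequence is built from the identification $(G/N)(k_s)=G(k_s)/N(k_s)$), and the clean justification is that the fiber over a $k_s$-point is a torsor under the smooth group $N_{k_s}$, hence has a $k_s$-point over the separably closed field $k_s$.
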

\begin{proof}
If $[\gamma]$ admits reduction of structure to $H^1(k,H)$ then  $[\gamma N]\in H^1(k,G/N)$ admits reduction of structure to $H^1(k,H/N)$ by commutativity of the square:
$$% https://tikzcd.yichuanshen.de/#N4Igdg9gJgpgziAXAbVABwnAlgFyxMJZABgBpiBdUkANwEMAbAVxiRAAkA9ARgAoBpUgCEAlCAC+pdJlz5CKbuSq1GLNlz6ChAegCCYydOx4CRAExLq9Zq0QdOZgaX0SpIDMblEy3ZdbV2Gk5CAOQGbh6ypgqkvlaqtvaawiF64UZR8sgWcSo26g5OLuLKMFAA5vBEoABmAE4QALZIZCA4EEjchiD1TZ3U7Uhm3b3NiAAsAx2IAKwjDWMAzFNI4-N9iMtt08TrY5PbnXtIMyuIwxTiQA
\begin{tikzcd}
{H^1(k,G)} \arrow[r]            & {H^1(k,G/N)}     \\
{H^1(k,H)} \arrow[r] \arrow[u] & {H^1(k,H/N)} \arrow[u] 
\end{tikzcd}$$
Conversely, if $\gamma N$ is homologous to a cocycle with image in $H/N$ there exists $g\in G$ such that for all $\sigma\in\Gal(k)$: $$g\gamma(\sigma)\sigma(g^{-1})N\subset H N= H.$$
This implies that $g\gamma(\sigma)\sigma(g^{-1})\in H$ for all $\sigma$ and so $[\gamma]\in H^1(k,H)$. 
\end{proof} 

We draw two direct corollaries relevant to our interests. 

\begin{corollary}\label{covering thm1}
Let $\pi: G\to H$ be a surjective morphism of algebraic groups. Let $P\subset G$ be a parabolic subgroup with $\ker\pi\subset P$ and $[\gamma]\in H^1(k,G)$ some torsor.
\begin{enumerate}
    \item $[\gamma]$ admits reduction of structure to $P$ if and only if $\pi_*[\gamma]$ admits reduction of structure to $\pi(P)$.
    \item Assume that either $\pi$ is a quotient map or $\ker\pi$ is central and $G$ is reductive. If $H$ is strongly isotropic so is $G$.
\end{enumerate}
\end{corollary}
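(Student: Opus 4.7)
For part~(1), the natural strategy is to apply \cref{useful lemma} to the chain $\ker\pi\subset P\subset G$, in which $\ker\pi$ is normal in $G$. Surjectivity of $\pi$ identifies $G/\ker\pi$ with $H$ on $\overline{k}$-points (which is all that Galois cohomology records); under this identification $P/\ker\pi$ corresponds to $\pi(P)$ and the class $[\gamma\ker\pi]$ corresponds to $\pi_*[\gamma]$. The claimed equivalence is then exactly the statement of \cref{useful lemma}.

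For part~(2), I would take any torsor $[\alpha]\in H^1(K,G)$ over a field extension $k\subset K$ and push it forward to $\pi_*[\alpha]\in H^1(K,H)$. Strong isotropy of $H$ gives a proper parabolic $Q\subset H_K$ such that $\pi_*[\alpha]$ admits reduction of structure to $Q$. The plan is then to lift $Q$ to a proper parabolic $P\subset G_K$ satisfying $\ker\pi\subset P$ and $\pi(P)=Q$. Part~(1) then forces $[\alpha]$ itself to admit reduction of structure to $P$, proving that $G$ is strongly isotropic.

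The construction of $P$ is where the case distinction in the hypothesis of (2) enters. If $\pi$ is a quotient map, take $P:=\pi^{-1}(Q)$; this is closed, contains $\ker\pi$, and the induced isomorphism $G_K/P\cong H_K/Q$ together with completeness of $H_K/Q$ shows that $P$ is parabolic. Properness of $P$ follows from surjectivity of $\pi$ together with $Q\neq H_K$. If instead $\ker\pi$ is central and $G$ is reductive, then $\ker\pi$ lies in every maximal torus and hence in every parabolic subgroup of $G$; the standard bijection between parabolic subgroups of $G$ and of $G/\ker\pi=H$ then produces the desired $P$ from $Q$.

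The principal obstacle is verifying that the lift of a proper parabolic remains a proper parabolic. Once \cref{useful lemma} is in hand this is really the only substantive step: it is immediate in the quotient-map case, and in the central-kernel reductive case it relies only on the fact that parabolics of a reductive group always contain the center, which sidesteps any need for $\pi$ to be smooth.
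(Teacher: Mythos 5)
Your proposal matches the paper's proof: part (1) is obtained by specializing \cref{useful lemma} to the chain $\ker\pi\subset P\subset G$, and part (2) by pulling a proper parabolic $Q\subset H$ back to the proper parabolic $\pi^{-1}(Q)\supset\ker\pi$ of $G$ and invoking part (1). The paper treats both hypotheses of (2) uniformly by citing Borel for the fact that $\pi^{-1}(Q)$ is a proper parabolic, whereas you split into the quotient-map and central-kernel cases, but the underlying argument is the same.
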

\begin{proof}
$(1)$ is a particular case of \cref{useful lemma}. $(2)$ follows from $(1)$ since $\pi^{-1}(P)$ is a proper parabolic of $G$ for any proper parabolic $P$ of $H$ by \cite[Theorem V.22.6]{Borel1991}, \cite[Proposition II.6.13]{Borel1991} and \cite[AG 17.3]{Borel1991}.
\end{proof}
The following example shows the restrictions on $\pi$ in the corollary are necessary.
\begin{example}
Let $G=\GL{1}(D)$ for some central division algebra $D$ of period $p$ over a field of characteristic $p>0$. Denote by $\pi:G\to G^{(p)}$ the  relative Frobenius homomorphism (see \cite[2.28]{Milne2017}). It is always surjective according to our definition \cite[Proposition 2.29]{Milne2017}.  It is easy to see $G^{(p)}$ is isomorphic to $\GL{1}(D^{(p)})$ where $D^{(p)}$ is the Frobenius twist of $D$ (see \cite[4.1.1]{Jacobson1996} for a definition). By \cite[Theorem 4.1.2]{Jacobson1996} $D^{(p)}$ is split and so $G^{(p)}$ is strongly isotropic by Hilbert 90. Meanwhile $G$ is anisotropic and in particular it is not strongly isotropic.
\end{example}

Since  the  center  of $G$ is  contained  in  any  parabolic subgroup $P\subset G$ the  previous  corollary  implies  the  following  one  which  will  be  used throughout this paper (sometimes implicitly).
 \begin{corollary}\label{adjoint thm1}
Assume $G$ is reductive. Let $\Ad:G\to \overline{G}$ be the canonical surjection. A torsor $[\gamma]\in H^1(k,G)$ is isotropic if and only if $\Ad_*[\gamma]$ is isotropic.
\end{corollary}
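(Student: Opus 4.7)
The plan is to view this corollary as an immediate application of \cref{covering thm1}(1) to the canonical surjection $\Ad:G\to\overline{G}$. The key structural input is that the kernel of $\Ad$ is the center $Z(G)$, and $Z(G)$ lies inside every maximal torus of $G$ (since maximal tori are self-centralizing), hence inside every parabolic subgroup of $G$. Thus the hypothesis $\ker(\pi)\subset P$ required in \cref{covering thm1}(1) holds automatically for every parabolic $P\subset G$, so the corollary is available whichever parabolic we work with.

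For the forward implication, suppose $[\gamma]$ admits reduction of structure to a proper parabolic $P\subset G$. Then \cref{covering thm1}(1) gives a reduction of $\Ad_*[\gamma]$ to the subgroup $\Ad(P)\subset\overline{G}$, which is parabolic as the image of a parabolic under a surjective morphism of algebraic groups. To conclude that $\Ad_*[\gamma]$ is isotropic we need $\Ad(P)$ to be proper. This follows from the standard bijection between parabolic subgroups of $G$ and of $\overline{G}$ induced by $\Ad$, with inverse $Q\mapsto\Ad^{-1}(Q)$ sending parabolics to parabolics by the references cited in the proof of \cref{covering thm1}(2); since $Z(G)\subset P$ one has $\Ad^{-1}(\Ad(P))=P\cdot Z(G)=P$, and hence $\Ad(P)=\overline{G}$ would force $P=G$, contradicting properness.

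For the converse, suppose $\Ad_*[\gamma]$ admits reduction of structure to a proper parabolic $Q\subset\overline{G}$, and set $P:=\Ad^{-1}(Q)$. By the same correspondence $P$ is a proper parabolic of $G$, it automatically contains $\ker(\Ad)$, and $\Ad(P)=Q$. A second application of \cref{covering thm1}(1) then yields the desired reduction of $[\gamma]$ to $P$, proving isotropy of $[\gamma]$. Since the whole argument is formal once \cref{covering thm1}(1) is in hand, there is no real obstacle; the only mildly delicate point is checking that the parabolic correspondence $P\leftrightarrow\Ad(P)$ preserves properness in both directions, which is handled by the identity $\Ad^{-1}(\Ad(P))=P$ above.
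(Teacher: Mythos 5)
Your proof is correct and follows essentially the same route as the paper, which likewise derives the corollary from \cref{covering thm1}(1) via the observation that $Z(G)=\ker(\Ad)$ is contained in every parabolic subgroup of $G$. The extra care you take with the correspondence $P\leftrightarrow\Ad(P)$ and its preservation of properness simply makes explicit what the paper leaves implicit.
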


By definition any torsor of a strongly isotropic group $G$ admits reduction of structure to some proper parabolic. In fact one can find a "universal" parabolic $P\subsetneq G$ such that $H^1(K,P)\to H^1(K,G)$ is surjective for any field extension $K$. In such a situation we say $G$ admits reduction of structure to $P$.
\begin{lemma}\label{versal torsors}
If $\Char(k)>0$ assume $G$ is reductive. $G$ is strongly isotropic if and only if it admits reduction of structure to some proper parabolic subgroup $P\subset G$.
\end{lemma}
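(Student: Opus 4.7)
The $(\Leftarrow)$ direction is immediate: if $H^1(K,P_K)\to H^1(K,G)$ is surjective for all $K \supset k$, then every torsor admits reduction to the proper parabolic $P_K$, hence is isotropic. For the $(\Rightarrow)$ direction the plan is to isolate a single universal parabolic from a \emph{versal} $G$-torsor and then transfer it to arbitrary torsors by a specialisation argument.

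I would start by choosing a generically free $k$-representation $V$ of $G$ (available for any linear $G$, and in particular for reductive $G$ in positive characteristic), letting $U \subset V$ be the open subscheme where $G$ acts freely, and by Rosenlicht's theorem taking a geometric quotient $\pi : U \to Y$ over a non-empty open subset $Y$ of the rational quotient (the hypothesis that $G$ be reductive in positive characteristic is used precisely to guarantee the quotient behaves well). Setting $K := k(Y)$, the generic fibre of $\pi$ represents a class $\eta \in H^1(K,G)$ with the usual versal property: for every infinite $L \supset k$ and every $[\gamma] \in H^1(L,G)$, the set of $y \in Y(L)$ with $\pi^{-1}(y)\cong \gamma$ is non-empty and Zariski dense, being the image of $({}_{\gamma^{-1}}U)(L) \to Y(L)$ with ${}_{\gamma^{-1}}U$ an open subscheme of an $L$-vector space.

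By strong isotropy, $\eta$ admits reduction of structure to a proper parabolic $P_K \subset G_K$; write $\bar t$ for its type, a Galois orbit of subsets of the simple roots of $G$. Since $Y$ is geometrically integral, $k$ is separably closed in $K$, so the restriction $\Gamma_K \twoheadrightarrow \Gamma_k$ is surjective and $\bar t$ is already $\Gamma_k$-stable. The scheme $\cP := \mathrm{Par}_{\bar t}(G)$ of type-$\bar t$ parabolics is then a projective $G$-homogeneous $k$-variety. I would form the proper morphism $\cE := U \times^G \cP \to Y$, whose fibres are copies of $\cP$. The chosen reduction of $\eta$ is a $K$-point of the fibre of $\cE$ over $\Spec K$, equivalently a rational section of $\cE \to Y$; by normality of $Y$ and properness of $\cE \to Y$, the valuative criterion extends this section to a regular section over an open $V \subset Y$ whose complement has codimension at least two. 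Evaluating the extended section at any $k$-point $y_0 \in V$ lying in the image of $U(k)\to Y(k)$ (non-empty and Zariski dense when $k$ is infinite) yields a $k$-point of $\cP$, i.e.\ a proper $k$-parabolic $P \subset G$ of type $\bar t$, the candidate universal parabolic.

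It then remains to check that every torsor reduces to $P$. For arbitrary $[\gamma] \in H^1(L,G)$ with $L$ infinite, versal density provides $y \in V(L)$ with $\pi^{-1}(y)\cong \gamma$; the section's value at $y$ is an $L$-point of ${}_\gamma(G/P)_L$, so $\gamma$ admits reduction to $P_L$ by \cref{isotropy and flags}. The case of finite $L$ is taken care of by Lang's theorem, which gives $H^1(L,G) = 0$ whenever $L$ is finite and $G_L$ is connected. The hardest part will be the careful treatment of the versal torsor: establishing versality with Zariski-dense specialisations, and extending the rational section of $\cE \to Y$ to an open subset large enough to meet the $L$-points realising every torsor class. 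Both facts are standard consequences of Rosenlicht's theorem and the valuative criterion of properness, but their precise formulations require care.
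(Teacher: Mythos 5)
Your argument is correct in substance and follows the same overall strategy as the paper: reduce the versal torsor to a proper parabolic, then propagate that reduction to every torsor by specialization. The difference is in how the propagation is executed. The paper converts the reduction of the versal class into a $G$-equivariant dominant rational map from the total space of the versal torsor to $G/P$ and then cites Duncan--Reichstein's ``very versal'' machinery to get surjectivity of $H^1(L,P)\to H^1(L,G)$ for infinite $L$, finishing with Lang's theorem for finite $L$; you instead unpack that black box by hand, forming the associated bundle $U\times^G \mathrm{Par}_{\bar t}(G)\to Y$, extending the rational section given by the reduction of $\eta$, and evaluating it on the Zariski-dense set of $L$-points of $Y$ that specialize to a given class. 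Your version is more self-contained and, to its credit, makes explicit a point the paper glosses over: why the parabolic can be taken to be defined over $k$ at all. You get the $\Gal(k_s/k)$-stability of the type $\bar t$ from the separable closedness of $k$ in $K=k(Y)$, and then produce an actual $k$-rational parabolic by evaluating the section at a $k$-point $y_0$ in the image of $U(k)$. Two small points to tidy up. First, that last step needs $k$ infinite; for finite $k$ you should note separately that $G$ is quasi-split by Lang's theorem, so a $k$-parabolic of every stable type exists. Second, the extension of the section across codimension one is not actually needed: the specialization points of any fixed class are Zariski dense in $Y_L$, hence already meet the dense open set where the rational section is regular (a dense subset of $Y(L)$ cannot lie in a proper closed subset). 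Neither issue affects the validity of the argument.
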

\begin{proof}
Let $U$ be a versal torsor of $G$ which is the fiber at the generic point of a $G$-torsor over a smooth irreducible base $Q\to X$, see \cite[Definition I.5.1]{GaribaldiS.MerkurjevA.Serre2003} for a definition and \cite[I.5.3]{GaribaldiS.MerkurjevA.Serre2003} for existence. We can associate to $U$ in a natural way a cohomology class $u\in H^1(K,G)$ where $K= k(X)$ \cite[I.5.2]{Serre1997}. If $G$ is strongly isotropic then $u$ admits reduction of structure to some proper parabolic $P$. Therefore the twist of $(G/P)_K$ by $U$, denoted ${}_U(G/P)_K$, has a $K$-rational point (see \cite[I.5.3]{Serre1997}). Therefore there exists a $G$-equivariant map $U \to G/P_K$  \cite[Proposition 3.2]{Duncan2015}. Since $U$ is the generic fiber of $Q\to X$ this defines a $G$-equivariant dominant rational map $Q\dasharrow G/P$. Therefore in the notation of \cite{Duncan2015} $G/P$ is a very versal $G$-torsor and $H^1(L,P)\to H^1(L,G)$ is surjective for any infinite field $L$ by \cite[Proposition 7.1]{Duncan2015}. For any finite field $L$, $H^1(L,G)$ vanishes since $G$ is connected by \cite[Theorem 1, Chapter 3, Section 2]{Serre1997}. Therefore $G$ admits reduction of structure to $P$.
\end{proof}
\begin{remark}
For any reductive and connected group $G$, any parabolic subgroup $P\subset G$ and any field extension $k\subset K$ the induced morphism $H^1(K,P)\to H^1(K,G)$ is injective \cite[Section III.2.1, Exericse 1]{Serre1997}. Therefore if $G$ admits reduction of structure to $P$, then $H^1(K,P)\to H^1(K,G)$ is a bijection for all $K$.
\end{remark}

\begin{proposition}\label{reduction to semisimple}
Let $G$ be a connected algebraic group over $k$. Denote by $\Rad_u(G), \Rad(G), G'$ the $k$-unipotent radical, $k$-radical and derived subgroup of $G$ respectively.
\begin{enumerate}
    \item Assume $k$ is perfect. Then $G$ is strongly isotropic if and only if $G/\Rad_u(G)$ is strongly isotropic.
    \item Assume $G$ is reductive. If $G$ is strongly isotropic then so is $G'$. If $\Rad(G)$ is split the converse implication holds as well.
\end{enumerate}
\end{proposition}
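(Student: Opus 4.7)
My plan is to apply \cref{covering thm1} to the quotient morphism $\pi\colon G \to G/\Rad_u(G)$. Because $\Rad_u(G)$ is normal, connected and solvable, it lies in every Borel subgroup of $G$, hence in every parabolic, and the assignment $P \mapsto \pi(P)$ gives a type-preserving bijection between proper parabolics of $G$ and of $G/\Rad_u(G)$. Since $k$ is perfect, $\Rad_u(G)$ is smooth and so $\pi$ is a separable quotient; thus \cref{covering thm1}(2) directly yields the implication $G/\Rad_u(G)$ strongly isotropic $\Rightarrow G$ strongly isotropic. For the converse I will lift any torsor $[\bar\gamma] \in H^1(K, G/\Rad_u(G))$ to some $[\gamma] \in H^1(K, G)$; strong isotropy of $G$ then puts $[\gamma]$ in a proper parabolic $P$ of $G_K$, and \cref{covering thm1}(1) pushes the reduction down to $[\bar\gamma]$ through $\pi$. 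To produce the lift I will invoke the Levi decomposition $G = L \ltimes \Rad_u(G)$, available over perfect $k$; the composition $L \hookrightarrow G \twoheadrightarrow G/\Rad_u(G)$ is an isomorphism of algebraic groups, producing a section of $\pi_*\colon H^1(K, G) \to H^1(K, G/\Rad_u(G))$ for every field extension $K/k$.

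\textbf{Part (2).} I plan to base both directions on the adjoint projection $\Ad\colon G \to \overline{G}$ together with the identification $\overline{G} = \overline{G'}$, which holds for reductive $G$ because $G = G' \cdot Z(G)$ and $G' \cap Z(G) = Z(G')$. For the forward direction, given $[\gamma'] \in H^1(K, G')$, I push it forward to $[\gamma] \in H^1(K, G)$; this torsor is isotropic by hypothesis, so $\Ad_*[\gamma]$ is isotropic in $\overline{G}$ by \cref{adjoint thm1}. Since $\Ad_*[\gamma]$ coincides with the image of $[\gamma']$ under $G' \to \overline{G'} = \overline{G}$, a second application of \cref{adjoint thm1} to $G'$ forces $[\gamma']$ to be isotropic. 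For the reverse direction, I must lift any $[\gamma] \in H^1(K, G)$ to some $[\gamma'] \in H^1(K, G')$; this is where the split hypothesis on $\Rad(G)$ enters. The abelianization $G/G' \cong \Rad(G)/(\Rad(G) \cap G')$ is a quotient of a split torus by a finite central subgroup, and its character lattice embeds with finite index into the trivially acted lattice of $\Rad(G)$, hence also carries trivial Galois action; thus $G/G'$ is itself a split torus and $H^1(K, G/G')=0$ by Hilbert 90. The standard exact sequence of nonabelian cohomology then gives the needed surjectivity of $H^1(K, G') \to H^1(K, G)$, and the argument concludes as in the forward direction.

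The main obstacle in each part is securing the relevant cohomological surjection from a smaller group onto a larger one. In (1) this is what forces the perfect hypothesis, which makes Levi decomposition available; in (2) it is precisely the role of the split hypothesis on $\Rad(G)$, without which nontrivial classes in $H^1(K, G/G')$ could obstruct the lift and genuinely prevent strong isotropy from propagating from $G'$ back to $G$.
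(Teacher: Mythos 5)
Your Part (2) is correct and essentially sound; it takes a slightly different route from the paper, which argues directly with the twisted flag varieties via Rosenlicht's isomorphism ${}_\gamma(G/P)\cong{}_\gamma(G'/P\cap G')$ coming from $PG'=G$, whereas you factor everything through the common adjoint quotient $\overline{G}=\overline{G'}$ and apply \cref{adjoint thm1} twice. Both arguments work, and both use the same mechanism (split radical $\Rightarrow$ $G/G'$ is a split torus $\Rightarrow$ $H^1(K,G')\to H^1(K,G)$ is surjective by Hilbert 90) for the lifting step.

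There is, however, a genuine gap in Part (1): you invoke a Levi decomposition $G=L\ltimes\Rad_u(G)$ "available over perfect $k$," but Levi subgroups need not exist in positive characteristic, even over algebraically closed fields (there are standard counterexamples built from Witt vectors; see McNinch's work on Levi decompositions). Since the paper's standing hypothesis is only $\Char(k)\neq 2$, perfect fields of characteristic $p>2$ are in scope and your construction of a section of $\pi_*\colon H^1(K,G)\to H^1(K,G/\Rad_u(G))$ breaks down there. The surjectivity (indeed bijectivity) you need does hold, but for a different reason: over a perfect field $\Rad_u(G)$ is split unipotent, and split unipotent groups together with all their inner twists have vanishing $H^1$, which by a twisting argument gives that $H^1(K,G)\to H^1(K,G/\Rad_u(G))$ is bijective for every $K$; this is exactly Sansuc's Lemme 1.13, which is what the paper cites. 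Replacing your Levi-section step by that reference repairs the argument; the rest of your Part (1) (that $\Rad_u(G)$ lies in every parabolic, that $\pi$ is a separable quotient, and the two applications of \cref{covering thm1}) is fine.
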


\begin{proof}
\begin{enumerate}
    \item Assume $k$ is perfect. Then $\Rad_u(G)$ is split. Let $k\subset K$ be a field extension. Since $\Rad_u(G)_K$ is split the induced morphism $H^1(K,G)\to H^1(K,G/\Rad_u(G))$ is bijective (see \cite[Lemme 1.13]{Sansuc1981}). Therefore \cref{covering thm1} implies $G$ is strongly isotropic if and only if $G/\Rad_u(G)$ is strongly isotropic because $\Rad_u(G)$ is contained in all of $G$'s parabolics. 
    
    \item Assume $G$ is reductive. For any parabolic subgroup $P\subset G$ consider the commutative diagram:
$$% https://tikzcd.yichuanshen.de/#N4Igdg9gJgpgziAXAbVABwnAlgFyxMJZABgBpiBdUkANwEMAbAVxiRAAkA9ARgAoBpUgHEA5AEoQAX1LpMufIRTdyVWoxZsufQUInTZ2PASJluq+s1aIOPAaQAKAHUcBjOmgAEovTJAZDCkTKZtQWGtZadvY+BvLGKABMKqHqVjbawgD03lK+-nGKyEkhapaatoL22SLObp7RUqowUADm8ESgAGYAThAAtkhkIDgQSMql4SDO+Dh0uV29A4hJw6OIAMz6ID39SCsjgyll1nNbO0vr1AeI42FpAEbz24tIl6tIAKxHk85oWAD69ie5zGVzWABZvmlfgChMCXogvu9EJCQAwsGA0nAIOioCBqAALGB0PGIMBMBgMK50LAMNiQTH4kBwAlYTo4JAAWluqTYLkakiAA
\begin{tikzcd}
{H^1(K,G')} \arrow[r, "\iota"]             & {H^1(K,G)} \arrow[r, "\pi"]                & {H^1(K,G/G')}                                  \\
{H^1(K,P\cap G')} \arrow[r] \arrow[u, "a"] & {H^1(K,P)} \arrow[u, "b"] 
\end{tikzcd}$$
If $\Rad(G)$ is split then $G/G' \cong \Rad(G)/(\Rad(G)\cap Z(G'))$ is a split torus since it is a homomorphic image of $\Rad(G)$  \cite[Theorem V.15.4]{Borel1991}. Therefore $\iota$ is surjective by Hilbert 90. Thus it suffices to show for any $[\gamma]\in H^1(K,G')$: $$[\gamma]\in \im(a) \iff \iota[\gamma]\in \im(b).$$  If  $[\gamma]\in \im(a)$ then $\iota[\gamma]\in \im(b)$ by commutativity of the diagram. Conversely, if  $\iota[\gamma]\in \im(b)$, then by \cref{isotropy and flags} ${}_\gamma (G/P)$ has a $K$-rational point. Since $PG'= G$, ${}_\gamma (G/P) $ is isomorphic over $K$ to ${}_\gamma (G'/P\cap G')$ (see \cite[Theorem 8]{Rosenlicht1956}). By \cref{isotropy and flags} it follows that $[\gamma]$ admits reduction of structure to $P\cap G'$.  This finishes the proof.

\end{enumerate}
\end{proof}
\begin{remark}\label{Conditions on Radical}
The assumption that $\Rad(G)$ is split is far from necessary for the converse implication in $(2)$ to hold. Let $G$, $G'$ be as in $(2)$ and assume $G'$ is strongly isotropic. Denote by $\partial_{G'} : H^1(K,\overline{G}) \to H^2(K,Z(G'))$ and $\partial_{Z}:H^1(K,Z(G)/Z(G')) \to H^2(K,Z(G'))$ the connecting maps induced from the exact sequences $1\to Z(G')\to G'\to \overline{G} \to 1$ and $1\to Z(G')\to Z(G) \to Z(G)/Z(G') \to 1$ respectively. It is easy to check that we have:
$$(\Ad_G)_*H^1(K,G) = \partial_{G'}^{-1}\im(\partial_Z).$$
Therefore \cref{adjoint thm1} implies $G$ is strongly isotropic if and only if for all $z\in H^1(K,Z(G)/Z(G'))$ the fiber $\partial^{-1}_{G'}(\partial_Z(z))$ consists of isotropic torsors. This holds for example whenever $\partial_Z = 0$ because $G'$ is strongly isotropic. When $G'$ is simple, one can deduce from \cref{simple strongly isotropic} and \cite[Chapter I, Proposition 44]{Serre1997} exactly what are the cohomology classes $x\in H^2(k,Z(G'))$ for which the fiber $\partial_{G'}^{-1}(x_K)$ consists of isotropic torsors for all extensions $k\subset K$. In many cases this reduces the problem of deciding whether $G$ is strongly isotropic to calculating the image of $\partial_Z$.
\end{remark}

We finish this section with an elementary lemma that will be used in \cref{semisimple section}.
\begin{lemma}\label{products and strong isotropy}
Let $G_1,G_2$ be connected algebraic groups over $k$ and put $G = G_1 \times G_2$.
\begin{enumerate}
    \item The maximal proper parabolics of $G$ are of the form $P_1 \times G_2$ or $G_1 \times P_2$ where $P_i$ is a maximal proper parabolic subgroup of $G_i$.
    \item $G$ is strongly isotropic if and only if $G_1$ or $G_2$ is strongly isotropic.
\end{enumerate}
\end{lemma}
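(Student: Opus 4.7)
The plan is to establish (1) via the standard description of parabolic subgroups in a product of algebraic groups, and then to derive (2) by combining (1) with the versal-torsor criterion \cref{versal torsors}.

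For (1), I would first show that every parabolic subgroup of $G_1\times G_2$ is a product $P_1\times P_2$ of parabolic subgroups of the factors. Over $\overline{k}$, Borel subgroups of $(G_1\times G_2)_{\overline{k}}$ are products $B_1\times B_2$ of Borels, and since a parabolic is characterized by containing a Borel, any parabolic of the product is a product; equivalently, the root system of $G_1\times G_2$ is the disjoint union $\Delta_1\sqcup\Delta_2$ with the simple roots splitting correspondingly, so the subsets indexing standard parabolics split as disjoint unions. A parabolic $P_1\times P_2$ is defined over $k$ if and only if each $P_i$ is. Among proper parabolics, if both $P_1\subsetneq G_1$ and $P_2\subsetneq G_2$ were proper, then $P_1\times G_2$ would be a strictly larger proper parabolic, so maximality forces exactly one factor to equal the whole group, and the other to be a maximal proper parabolic of the corresponding $G_i$.

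The $(\Leftarrow)$ direction of (2) is then immediate. Assuming without loss of generality that $G_1$ is strongly isotropic, any torsor $[\gamma]\in H^1(K,G) = H^1(K,G_1)\times H^1(K,G_2)$ decomposes as $([\gamma_1],[\gamma_2])$; strong isotropy of $G_1$ provides a proper parabolic $P_1\subsetneq G_1$ to which $[\gamma_1]$ reduces, so $[\gamma]$ admits reduction of structure to $P_1\times G_2$, which is a proper parabolic of $G$ by (1).

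For $(\Rightarrow)$, I would invoke \cref{versal torsors} to obtain a proper parabolic $P\subsetneq G$ such that $H^1(K,P)\to H^1(K,G)$ is surjective for every field extension $K/k$. By (1), after possibly swapping the factors, $P = P_1\times G_2$ with $P_1\subsetneq G_1$. Via the compatible decompositions $H^1(K,P) = H^1(K,P_1)\times H^1(K,G_2)$ and $H^1(K,G) = H^1(K,G_1)\times H^1(K,G_2)$, surjectivity of $H^1(K,P)\to H^1(K,G)$ projects to surjectivity of $H^1(K,P_1)\to H^1(K,G_1)$, so $G_1$ admits reduction of structure to $P_1$ and is in particular strongly isotropic. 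The main subtlety to watch is the hypothesis of \cref{versal torsors}, which in positive characteristic requires reductivity of the ambient group; in the reductive or characteristic-zero setting this causes no issue, and alternatively one can avoid \cref{versal torsors} altogether by a direct specialization argument exploiting the fact that the projective varieties ${}_{\gamma}(G/P)$ have the same $K$-points after base change along purely transcendental extensions.
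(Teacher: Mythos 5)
Your proof is correct and follows essentially the same route as the paper's: part (1) via the structure of parabolics in a product (the paper projects $P$ to each factor and cites Borel; you argue via Borels/root systems that every parabolic is a product, which amounts to the same thing), and part (2) via the product decomposition of Galois cohomology together with \cref{versal torsors}. Your closing caveat about the reductivity hypothesis of \cref{versal torsors} in positive characteristic is a point the paper's one-line proof glosses over, but it is harmless for the paper's applications, which are to semisimple groups.
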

\begin{proof}
Part $(2)$ follows from $(1)$, the fact Galois cohomology preserves products and \cref{versal torsors}. For part $(1)$ notice that $P_1\times G_2$ and $G_1\times P_2$ are clearly parabolic subgroups of $G$. Let $P\subset G$ be a proper parabolic subgroup and denote by $H_i$ the projection of $P$ onto the $i$-th coordinate. By \cite[Proposition IV.11.14]{Borel1991} and \cite[Corollary I.1.4]{Borel1991},  $H_1,H_2$ are parabolic subgroups. Therefore $P$ is contained in a parabolic of the required form.
\end{proof}

%%%%%%%%%%%%%%%%%%%%%%%%%%%%%%%%%%%%%%%%%%%%%%%%
\section{Generic anisotropic constructions} Our proof \cref{simple strongly isotropic} will rely on lemmas asserting the existence of various anisotropic algebraic objects. In this section we give generic constructions of anisotropic bilinear forms and involutions on central simple algebras. For the definition of isotropy of involutions of central simple algebras see \cite[Page 72]{Knus1998}. The following elementary lemma will be used in our constructions repeatedly. 
\begin{lemma}\label{Existence of anisotropic}
Let $k$ be a field with $\Char(k) \neq 2$ and let $n$ be a natural number. There exists a quadratic form $g$ of dimension $n$ and unsigned discriminant $1$ over a finitely generated transcendental field extension $K$ such that $f_K\otimes_K g$ is anisotropic for any anisotropic quadratic form $f$ over $k$.
\end{lemma}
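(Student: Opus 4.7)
The plan is to take $K = k(t_1, \ldots, t_{n-1})$, purely transcendental of degree $n-1$, and the explicit diagonal form
$$
g = \langle t_1, t_2, \ldots, t_{n-1}, t_1 t_2 \cdots t_{n-1}\rangle.
$$
Then $\dim g = n$, and the product of the diagonal entries equals $(t_1 \cdots t_{n-1})^2$, a square in $K^*$, so $g$ has trivial unsigned discriminant. The substantive content is to prove that $f_K \otimes g$ is anisotropic over $K$ for every anisotropic $f$ over $k$, and I will do this by iterated application of Springer's theorem along the tower of $t_i$-adic valuations on $K$.

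The main tool is Springer's theorem for quadratic forms over a complete discretely valued field of residue characteristic $\neq 2$: a diagonal form $q_1 \perp \pi q_2$ with unit coefficients is anisotropic if and only if both residue forms $\overline{q}_1$ and $\overline{q}_2$ are anisotropic over the residue field. Since isotropy passes to field extensions, to verify anisotropy of a form over $k(t_1, \ldots, t_m)$ it suffices to do so over the completion $k(t_1, \ldots, t_{m-1})((t_m))$. I will also freely use two standard facts: purely transcendental extensions and scaling by a nonzero scalar both preserve anisotropy of quadratic forms. As a preliminary step I would prove by induction on $r$ that for any anisotropic $f$ over $k$, the form $f \otimes \langle u_1, \ldots, u_r\rangle$ is anisotropic over $k(u_1, \ldots, u_r)$; the inductive step applies Springer at $u_r$, producing residues $f \otimes \langle u_1, \ldots, u_{r-1}\rangle$ (anisotropic by induction) and $f$ (anisotropic by purely transcendental invariance).

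For $g$ itself, I write $g = \langle t_1, \ldots, t_{n-2}\rangle \perp t_{n-1}\langle 1, t_1 \cdots t_{n-2}\rangle$ and apply Springer at $t_{n-1}$: the two residues over $K_{n-2} := k(t_1, \ldots, t_{n-2})$ are $f \otimes \langle t_1, \ldots, t_{n-2}\rangle$ (anisotropic by the preliminary step) and $f \perp (t_1 \cdots t_{n-2}) f$. A further Springer step at $t_{n-2}$ on the second residue, writing $t_1 \cdots t_{n-2} = t_{n-2}(t_1 \cdots t_{n-3})$, yields residues $f$ and $(t_1 \cdots t_{n-3}) f$ over $K_{n-3}$, both anisotropic by the two basic facts above. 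The main technical obstacle is the bookkeeping of this iterated decomposition, ensuring that every branch of the recursion terminates at $f$ anisotropic over $k$; small values of $n$ are treated as direct base cases (for instance $g = \langle 1\rangle$ over $K = k(t)$ when $n = 1$).
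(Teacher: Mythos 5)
Your construction and argument are essentially the paper's. The paper takes $g=\langle -x_1,\dots,-x_{n-1},(-1)^{n-1}x_1\cdots x_{n-1}\rangle$ over $k(x_1,\dots,x_{n-1})$ and, instead of running your two-branch recursion by hand, observes that $f_K\otimes g$ is a subform of $f_K\otimes\langle\langle x_1,\dots,x_{n-1}\rangle\rangle$, whose anisotropy follows by a single linear induction from Lam, Ch.~IX, Exercise~1 --- which is exactly your Springer step at the $t_i$-adic valuations. So the proposal is correct for $n\geq 3$ and follows the same route, with slightly heavier bookkeeping in place of the Pfister-subform shortcut. One caveat, which affects your write-up and the paper's proof equally: for $n=2$ the discriminant condition forces $g\cong a\langle 1,1\rangle$, so $f_K\otimes g$ is similar to $f_K\perp f_K$, which can be isotropic even for anisotropic $f$ (e.g.\ $k=\mathbb{Q}_p$ and $f$ anisotropic of dimension $3$, since isotropy of $f\perp f$ persists under any field extension); your remark that small $n$ are ``direct base cases'' silently passes over this, and indeed no choice of $g$ can work there. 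This degeneracy is harmless for the lemma's actual uses in the paper ($n=4$, $n=10$, or situations where the discriminant condition is not needed), but it should be flagged rather than absorbed into the base cases.
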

\begin{proof}
We can take $k\subset K$ to be a field extension generated by algebraically independent elements $x_1 ,\dots ,x_{n-1}$ and $g$ to be the quadratic form $\langle -x_1 ,\dots ,-x_{n-1}, (-1)^{n-1} \prod_{i=1,\dots, n-1}x_i\rangle$. To prove $f_K\otimes g$ is anisotropic it suffices to prove $f_K\otimes \langle\langle x_1 ,\dots ,x_{n-1}\rangle\rangle$ is anisotropic since the former is a subform of the latter. This follows by induction from \cite[Exercise 1, Chapter IX]{Lam2004}.
\end{proof}

The following corollary will be used to construct anisotropic symplectic involutions (see \cite[Definition 2.5]{Knus1998} for the definition of a symplectic involution).

\begin{corollary}\label{Quaternion hermitian}
Let $Q$ be a quaternion division algebra over $k$ and let  $n\in \bN$. There exists an anisotropic hermitian form on $Q^n_K = Q_K \times \dots \times Q_K$ for some finitely generated transcendental field extension $k\subset K$.
\end{corollary}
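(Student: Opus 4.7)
The plan is to reduce the existence of an anisotropic hermitian form on $Q_K^n$ to the existence of a suitable anisotropic quadratic form over $K$, and then apply \cref{Existence of anisotropic}. The bridge between these two problems is the standard transfer between hermitian forms over a quaternion algebra (with respect to the canonical involution) and quadratic forms over the base field, mediated by the reduced norm.

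More concretely, I would first diagonalize a candidate hermitian form. Every hermitian form $h$ on $Q^n$ with respect to the canonical involution admits an orthogonal basis, so it can be written as $h = \langle a_1,\dots,a_n\rangle$ with $a_i \in k^\times$. Evaluating on $v = (v_1,\dots,v_n) \in Q^n$ yields $h(v,v) = \sum_{i=1}^n a_i\,N_Q(v_i)$, where $N_Q$ denotes the reduced norm form of $Q$, a $2$-fold Pfister form of dimension $4$. Hence $h$ is anisotropic as a hermitian form on $Q^n$ if and only if the quadratic form $\langle a_1,\dots,a_n\rangle \otimes N_Q$ is anisotropic as a quadratic form of dimension $4n$ over the base field. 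This equivalence is the content of Jacobson's classical theorem on hermitian forms over quaternion algebras; see for example \cite[Theorem 10.1.1, Chapter I]{Knus1998}.

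With that identification in hand the construction is immediate. Since $Q$ is a division algebra, the norm form $N_Q$ is anisotropic over $k$. Applying \cref{Existence of anisotropic} with $f = N_Q$ produces a finitely generated transcendental extension $k \subset K$ and an $n$-dimensional quadratic form $g = \langle -x_1,\dots,-x_{n-1},(-1)^{n-1}\prod_{i}x_i\rangle$ over $K$ such that $(N_Q)_K \otimes g$ is anisotropic over $K$. Interpreting the scalars $-x_1,\dots,-x_{n-1},(-1)^{n-1}\prod_i x_i$ as the diagonal entries of a hermitian form on $Q_K^n$, the equivalence above shows that this hermitian form is anisotropic.

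The only potential obstacle is the reference to Jacobson's theorem identifying isotropy of diagonal hermitian forms with isotropy of their trace quadratic forms; this is well-documented and requires no new input. Everything else is a direct application of \cref{Existence of anisotropic}.
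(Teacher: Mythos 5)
Your proposal is correct and follows essentially the same route as the paper: apply \cref{Existence of anisotropic} to the (anisotropic) norm form of $Q$ and use the resulting diagonal entries as the coefficients of a diagonal hermitian form on $Q_K^n$. The only difference is cosmetic — the paper verifies directly, in one line, the single implication it needs (that $b(q,q)=0$ forces $q=0$ because $g\otimes n(Q_K)$ is anisotropic), whereas you invoke the full Jacobson equivalence between hermitian forms and their trace forms; do double-check your citation for that equivalence, which is standard but is not \cite[Theorem 10.1.1, Chapter I]{Knus1998}.
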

\begin{proof}
Denote by $n(Q)$ the norm form of $Q$. By \cref{Existence of anisotropic} there exists a transcendental field extension $k\subset K$ and a quadratic form $g = \langle a_1,\dots ,a_n\rangle$ such that $g\otimes n(Q_K)$ is anisotropic. Define a hermitian form $b$ on $Q_K^n$ by setting for any $q = (q_1,\dots q_n), q' = (q'_1,\dots ,q'_n)\in Q^n_K$:
$$b(q,q') = \sum_r a_r q_r \overline{q'_r}.$$
Clearly $b$ is hermitian. Notice that $b(q,q) = 0$ implies 
$$g\otimes n(Q_K) (q)=\sum_r a_r n(Q_K)(q_r) =0. $$
Which implies $q_r=0 $ for all $r$ since $g\otimes n(Q_K)$ is anisotropic.
\end{proof}

The next lemma will be used in \cref{Classification of simple strongly isotropic algebraic groups} to prove anisotropic $G$-torsors exist for groups $G$ of type $C_n$ which are not isomorphic to $\Sp_{2n}$.

\begin{lemma}\label{anisotropic symplectic involution}
Let $A$ be a central simple algebra of exponent two over $k$. If $A$ is not split then there exists a field extension $k\subset K$ such that $A_K$ admits an anisotropic symplectic involution.
\end{lemma}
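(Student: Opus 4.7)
The plan is to reduce the problem to the case of a matrix algebra over a quaternion division algebra and then invoke \cref{Quaternion hermitian}.

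Let $D$ be the underlying division algebra of $A$. Since $A$ is non-split of exponent $2$, $D$ has exponent $2$ and degree $2^m$ for some $m\ge 1$. My first step is to find a field extension $k\subset K_0$ with $\ind(D_{K_0}) = 2$; when $m=1$ one takes $K_0 = k$, and when $m\ge 2$ I would iterate the index reduction formulas of Merkurjev--Panin--Wadsworth (referenced in the introduction) by passing at each stage to the function field of a suitable generalized Severi--Brauer variety of $D$, halving the index of $D$ until it reaches $2$. Then $D_{K_0}$ is Brauer equivalent to a quaternion division algebra $Q/K_0$ and $A_{K_0}\cong M_{r'}(Q)$ with $r' = \deg(A)/2$.

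Applying \cref{Quaternion hermitian} to $Q$ and $r'$ supplies a further extension $K_0\subset K$ together with an anisotropic hermitian form $h$ on $Q_K^{r'}$ with respect to the canonical symplectic involution $\gamma$ of $Q_K$. Let $\sigma_h$ denote the adjoint involution of $h$ on $\operatorname{End}_{Q_K}(Q_K^{r'})\cong A_K$; writing elements of the endomorphism algebra as matrices acting on columns and $h$ in its standard form, one computes $\sigma_h(M) = \bigl(\gamma(M_{ji})\bigr)$. A direct dimension count then shows that the $K$-space of $\sigma_h$-fixed matrices has dimension $r' + 4\binom{r'}{2} = \binom{2r'}{2}$, which is the correct dimension of the space of symmetric elements for a symplectic involution on a central simple algebra of degree $2r' = \deg A$; hence $\sigma_h$ is symplectic. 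Anisotropy of $\sigma_h$ is immediate from the anisotropy of $h$: if $\sigma_h(f)f = 0$ then $h(f(x),f(y)) = h(x,\sigma_h(f)f(y)) = 0$ for all $x,y$, so $h$ vanishes on $\im(f)$, and anisotropy of $h$ forces $f = 0$.

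The main technical obstacle is Step~1, namely the reduction to index~$2$; once $K_0$ is in hand, the remaining construction of an anisotropic symplectic involution is a mechanical combination of \cref{Quaternion hermitian} with the standard type dictionary for adjoint involutions of hermitian forms over $(Q,\gamma)$.
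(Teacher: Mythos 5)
Your proposal is correct and follows essentially the same route as the paper: reduce the index of $A$ to $2$ by passing to the function field of a generalized Severi--Brauer variety, then combine \cref{Quaternion hermitian} with the standard correspondence between hermitian forms over $(Q,\gamma)$ and symplectic involutions on $M_{r'}(Q)$. The only cosmetic difference is that the paper achieves the index reduction in a single step via Blanchet's theorem applied to the variety of right ideals of reduced dimension $2$ (rather than iterating), and it delegates the type and anisotropy verifications to the references in \cite{Knus1998} instead of the explicit dimension count you give.
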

\begin{proof}
Let $X$ be the generalized Severi-Brauer variety of right ideals of $A$ of dimension $2\deg(A)$. By \cite[Theorem 3]{Blanchet1991} the index of $A_{k(X)}$ is two. Therefore by Wedderburn's theorem $A_{k(X)}$ is isomorphic to $M_n(Q)$ for some quaternion division algebra $Q$ with $Z(Q)= k(X)$. By \cref{Quaternion hermitian} after possibly extending scalars to a transcendental extension we may assume $Q^n$ admits an anisotropic hermitian form. Then we are done by the correspondence between hermitian forms on $Q^n$ and symplectic involutions on $M_n(Q)$ (see \cite[Proposition 2.21, Theorem 4.2]{Knus1998}).
\end{proof}

Next we construct anisotropic unitary involutions (see \cite[I.2.B]{Knus1998} for the definition of a unitary involution). The idea for the reduction to split case in the proof is reproduced from a Mathoverflow post by M. Borovoi \cite{Borovoi2010}.

\begin{lemma}\label{Anisotropic hermitian involution}
Let $A$ be a simple $k$-algebra of degree $n$ such that $Z(A) = K$ where $K = k(\sqrt{\alpha})$ is a quadratic field extension of $k$. There exists a field extension $k\subset L$ and a $d$-dimensional quadratic form $q$ over $L$ of unsigned discriminant $1$ such that the following holds:
\begin{enumerate}
    \item $A\otimes_k L$ and  $M_d(K_L)$ are isomorphic as $L$-algebras.
    \item $\alpha$ is not a square in $L$.
    \item The unitary involution $\gamma \otimes_L \sigma_q$ on $M_n(K_L)$ is anisotropic, where $\sigma_q$ is any involution adjoint to $q$ and $\gamma$ is the unique non-trivial element of $\Gal(K_L/L)$. 
\end{enumerate}
\end{lemma}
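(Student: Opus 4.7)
The plan is to proceed in three steps: first reduce to the case where $A_L$ is split by a Weil-restriction argument, then invoke \cref{Existence of anisotropic} to produce a generic quadratic form $q$ over a further extension, and finally translate anisotropy of the unitary involution into anisotropy of a quadratic form via the associated hermitian form. I expect the reduction step to be the main obstacle, since any naive splitting of $A$ over a $K$-extension immediately forces $\sqrt{\alpha}$ into the ground field, killing condition $(2)$.

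For the reduction, let $Y = \mathrm{SB}(A)$ denote the Severi-Brauer variety of $A$ over $K$, and set $X = R_{K/k}(Y)$, its Weil restriction down to $k$. Base-changing to $\overline{k}$ gives $X_{\overline{k}}\cong Y_{\overline{k}}\times Y^\gamma_{\overline{k}}\cong \mathbb{P}^{n-1}_{\overline{k}}\times\mathbb{P}^{n-1}_{\overline{k}}$, so $X$ is geometrically integral over $k$. Setting $L_0 = k(X)$, the extension $L_0/k$ is regular, hence $k$ is algebraically closed in $L_0$; since $\alpha\notin k^2$ by hypothesis, this forces $\alpha\notin L_0^2$, so that $K_{L_0} := K\otimes_k L_0$ is a field. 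The generic point of $X$ is an $L_0$-point, which by the universal property of the Weil restriction corresponds to a $K_{L_0}$-point of $Y$. Thus $Y$ splits over $K_{L_0}$, equivalently $A\otimes_k L_0\cong M_n(K_{L_0})$, establishing condition $(1)$ at the intermediate stage $L_0$ (with $d = n$).

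With $L_0$ in hand, I would apply \cref{Existence of anisotropic} over $L_0$ to obtain a finitely generated transcendental extension $L/L_0$ and a quadratic form $q$ of dimension $n$ and unsigned discriminant $1$ such that $f_L\otimes q$ is anisotropic for every anisotropic form $f$ over $L_0$. Purely transcendental extensions preserve non-squareness, so $\alpha\notin L^2$ and the binary form $\langle 1,-\alpha\rangle$ is anisotropic over $L_0$; hence $q\otimes\langle 1,-\alpha\rangle$ is anisotropic over $L$. Conditions $(1)$ and $(2)$ base-change to $L$ without issue.

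To verify $(3)$ I would use the standard correspondence between unitary involutions on $M_n(K_L)$ and non-degenerate hermitian forms on $K_L^n$: the involution $\gamma\otimes_L\sigma_q$ is adjoint to the diagonal hermitian form $h_q=\langle b_1,\dots,b_n\rangle$, where $q=\langle b_1,\dots,b_n\rangle$. Writing a vector $v\in K_L^n$ in coordinates $v_i=x_i+y_i\sqrt{\alpha}$ with $x_i,y_i\in L$, one computes $h_q(v,v)=\sum_i b_i(x_i^2-\alpha y_i^2)$, so $h_q$ is anisotropic if and only if $q\otimes\langle 1,-\alpha\rangle$ is anisotropic over $L$, which was arranged in the previous paragraph. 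The entire difficulty is concentrated in the Weil-restriction reduction; the remaining steps are routine combinations of \cref{Existence of anisotropic} and the hermitian-form dictionary.
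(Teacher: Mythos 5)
Your argument is correct, and the overall skeleton (reduce to the split case while keeping $\alpha$ a non-square, invoke \cref{Existence of anisotropic}, translate via the hermitian-form dictionary) matches the paper; but your reduction step is genuinely different. The paper follows Borovoi's argument: after passing to an infinite base field it finds a hermitian element $a\in A$ with separable reduced characteristic polynomial (a rational point of a dense open subset of the affine space of hermitian elements), takes the $\sigma$-stable maximal \'etale subalgebra $L_1=C_A(a)$, and uses the finite extension $L_2=L_1^{\sigma}$, which splits $A$ and does not contain $\sqrt{\alpha}$. Your route instead takes the function field of the Weil restriction $R_{K/k}\bigl(\mathrm{SB}(A)\bigr)$; geometric integrality gives condition $(2)$ and the universal property gives condition $(1)$. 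Your version has two small advantages: it never uses an involution $\sigma$ on $A$ (which the paper's proof quietly assumes even though the lemma's statement does not provide one), and it needs no reduction to the infinite-field case since the generic point exists over any $k$. The trade-off is that your splitting field $L_0=k(X)$ is of infinite transcendence degree over $k$, whereas the paper's $L_2$ is a finite extension --- immaterial here, since both proofs pass to a further purely transcendental extension anyway, and the lemma places no finiteness requirement on $L$. The remaining steps (preservation of non-squareness under purely transcendental extensions, the identity $h_q(v,v)=\bigl(q\otimes\langle 1,-\alpha\rangle\bigr)(x,y)$ for $v=x+y\sqrt{\alpha}$) agree with the paper's computation with the norm form $f$ of $K$.
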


\begin{proof}
By replacing $k$ with a transcendental extension we may assume $|k|=\infty$ without loss of generality. We start by constructing $L$. Let $S$ be the $k$-variety of hermitian elements in $A$ with separable reduced characteristic polynomial (defined over $K$).  There exists a canonical isomorphism: $$\varphi: (A,\sigma)\otimes_k \overline{K} \to (M_n(\overline{K})\times M_n(\overline{K})^{op}, \varepsilon),\ \  \varphi(a\otimes \beta) =(a\beta, (\sigma(a)\beta)^{op}) $$
where $\varepsilon$ is the exchange involution and $n=\deg_K(A)$ (see \cite[Proposition 2.14]{Knus1998}). Using $\varphi$ one sees $S_{\overline{k}}$ is isomorphic to a Zariski dense open subvariety in the space of hermitian elements of $A_{\overline{k}}$, which is in turn isomorphic to $\bA_{\overline{k}}^{n^2}$. Since $|k|=\infty$ , $S_{\overline{k}}$ must have a $k$-rational point, i.e. there exists $a\in A$ hermitian and with separable reduced polynomial. Let $L_1$ be the centralizer of $a$ in $A$. By the proof of \cite[Proposition 4.5.4]{Gille2006} $L_1$ is a splitting field of $A$. Since $a$ is hermitian $L_1$ is stable under $\sigma$. Therefore $L_2 = L_1^\sigma$ is a field extension of $k$ in which ${\alpha}$ is not a square and such that $A\otimes_k L_2$ is split.  Let $f: K\to k$ be the norm form of $K$ and let $\gamma$  be the unique non-trivial element of $ \Gal(K_L/L)$. By \cref{Existence of anisotropic} there exists an $n$-dimensional quadratic form $q = \langle a_1 ,\dots ,a_n\rangle$ of unsigned discriminant $1$ over a transcendental extension $L$ of $L_2$ such that $f_L\otimes q$ is anisotropic. Denote by $\sigma_q$ an involution of $M_n(L)$ adjoint to $q$. To show $\gamma\otimes_L \sigma_q$ is anisotropic we use the correspondence between unitary involutions and hermitian forms, see \cite[Proposition 2.20]{Knus1998} for details. One easily checks that the following hermitian form on $K_L^n$ is adjoint to $\gamma \otimes_L \sigma_q$:
$$h((x_i)^n_{i=1},(y_j)^n_{j=1}) = \sum a_i x_i \gamma(y_i). $$
For all $x\in K_L^n$ we have $h(x,x) = (f_L\otimes_L q)(x)$. Since $f_L\otimes_L q$ is anisotropic this implies $h$ and $\gamma\otimes_L \sigma_q$ are anisotropic. Since $L_2\subset L$ is transcendental $\alpha$ is not a square in $L$ and $A_L$ is split. 
\end{proof}

We recall some notation defined in \cite[Chapter V]{Lam2004}. Let $q$ be a quadratic form defined over a field $k$. We denote by $C(q)$ the corresponding Clifford algebra. Define $c(q) = [C(q)]$ if $\dim(q)$ is even and $c(q) = [C_0(q)]$ if it is odd. Here $[\cdot]$ denotes Brauer equivalence classes and $C_0$ is the even part of $C(q)$. $c(q)$ is called the Witt invariant of $q$. To deal with simply connected groups of type $D_5$ we will need to characterize isotropy of $\Spin(q)$-torsors where $q$ is a quadratic form. We will do this using the natural map $H^1(k,\Spin(q))\to H^1(k,\SO(q))$  whose image consists of quadratic forms with the same Witt invariant, discriminant and dimension as $q$ by \cite[III.3.2.b, Page 140]{Serre1997}. The following lemma collects some well-known properties of the Witt invariant. Proofs can be found in \cite[Chapter 5, 3.15,3.16]{Lam2004} and \cite[Chapter 5, Corollary 3.3]{Lam2004}. 

\begin{lemma}\label{properties of Cliff invar}
The following properties of the Witt invariant hold.
\begin{enumerate}
    \item If $q_1$ and $q_2$ be even dimensional quadratic forms over $k$. We have: $$c(q_1 \perp q_2) = c(q_1) + c(q_2) + [(d_{\pm}(q_1),d_{\pm}(q_2))_k],$$ where $d_{\pm}(\cdot)$ denotes the signed determinant.
    \item For any $a,b \in k:$ $$c(\langle 1,-a,-b,ab\rangle)= [(a,b)_k].$$
    \item For any odd dimensional form $q$ and $a\in k^*$ we have:
    $$c(\langle a\rangle q) = c(q). $$
    \item For any even dimensional form $q$ and $a\in k^*$ we have:
    $$c(\langle a\rangle q) = c(q)+ [(a,d_{\pm}(q))_k]. $$
\end{enumerate}
\end{lemma}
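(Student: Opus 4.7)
The plan is to extract all four identities from two structural facts about Clifford algebras: the graded tensor decomposition $C(q_1 \perp q_2) \cong C(q_1) \,\widehat{\otimes}\, C(q_2)$ and the classical isomorphism $C(\langle 1, -a\rangle \otimes \langle 1, -b\rangle) \cong M_2((a,b)_k)$ of the Clifford algebra of a $2$-fold Pfister form with the matrix algebra over a quaternion algebra.

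First I would dispose of $(2)$: the form $\langle 1, -a, -b, ab \rangle$ is precisely the $2$-fold Pfister form $\langle\langle a, b\rangle\rangle = \langle 1, -a\rangle \otimes \langle 1, -b\rangle$, so its Clifford algebra is $M_2((a,b)_k)$ and $c(\langle 1, -a, -b, ab\rangle) = [(a,b)_k]$. Next I would prove $(1)$, which is the engine of the lemma. As vector spaces the graded and ungraded tensor products of $C(q_1)$ with $C(q_2)$ coincide, but they differ in how the odd generators commute. To convert the graded formula into an identity in the Brauer group, I would pick orthogonal bases $(e_i)$ and $(f_j)$ for $q_1$ and $q_2$; the top products $e_1 \cdots e_{2m_1}$ and $f_1 \cdots f_{2m_2}$ lie in the even centers of $C(q_1)$ and $C(q_2)$ and square to scalar multiples of $d_{\pm}(q_1)$ and $d_{\pm}(q_2)$ respectively. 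A direct bookkeeping argument then shows that passing from the graded to the ungraded tensor product multiplies the Brauer class by exactly $[(d_{\pm}(q_1), d_{\pm}(q_2))_k]$.

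Parts $(3)$ and $(4)$ follow by specialization of $(1)$. For $(4)$ I would first establish the two-dimensional base case by a short symbol computation in the Brauer group, namely $c(\langle a a_1, a a_2\rangle) - c(\langle a_1, a_2\rangle) = [(a a_1, a a_2)_k] - [(a_1, a_2)_k] = [(a, -a_1 a_2)_k] = [(a, d_{\pm}\langle a_1, a_2\rangle)_k]$, and then induct on $\dim q$ via $(1)$, using the fact that $d_{\pm}(\langle a \rangle q') = d_{\pm}(q')$ modulo squares when $\dim q'$ is even and that $d_{\pm}$ is multiplicative on orthogonal sums of even-dimensional forms. For $(3)$, with $\dim q$ odd, I would write $\langle a\rangle q = \langle a\rangle q' \perp \langle a a_n\rangle$ and combine the standard identification of $C_0$ of an orthogonal sum as a Clifford algebra of a rescaled form with the scaling formula from $(4)$; the factor $a$ is absorbed into squares because it appears an even number of times after passing to the even part, giving $c(\langle a\rangle q) = c(q)$.

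The main obstacle is the sign and discriminant bookkeeping in $(1)$: one must track how the graded-versus-ungraded tensor conventions interact with the products $e_1 \cdots e_{2m_1}$ and $f_1 \cdots f_{2m_2}$, and this is precisely the step where the quaternion symbol $(d_{\pm}(q_1), d_{\pm}(q_2))_k$ materializes as the correction term. Once $(1)$ is in place, all remaining identities reduce to routine symbol manipulations in $\Br(k)$.
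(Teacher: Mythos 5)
Your proposal is correct in substance, but note that the paper does not actually prove this lemma: it simply cites \cite[Chapter 5, 3.15, 3.16]{Lam2004} and \cite[Chapter 5, Corollary 3.3]{Lam2004}, where the standard proofs proceed largely by diagonalizing and manipulating the Hasse invariant $s(q)=\sum_{i<j}[(a_i,a_j)]$ before translating to $c(q)$. Your route is the more structural one: part $(1)$ is exactly Wall's computation in the graded Brauer group, where for even-dimensional $q$ the element $z=e_1\cdots e_{2m}$ generates the center of $C_0(q)$ and satisfies $z^2=d_{\pm}(q)$ modulo squares, and the passage from $C(q_1)\,\widehat{\otimes}\,C(q_2)$ to the ungraded tensor product contributes precisely $[(d_{\pm}(q_1),d_{\pm}(q_2))_k]$. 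Your symbol computations check out: the base case of $(4)$ gives $[(aa_1,aa_2)]-[(a_1,a_2)]=[(a,a)]+[(a,a_1a_2)]=[(a,-a_1a_2)]=[(a,d_{\pm}\langle a_1,a_2\rangle)]$, the induction via $(1)$ closes because $d_{\pm}$ is multiplicative on orthogonal sums of even-dimensional forms and insensitive to scaling such forms, and $(3)$ follows from $C_0(q'\perp\langle d\rangle)\cong C(\langle -d\rangle q')$ together with $\langle -aa_n\rangle\langle a\rangle q'=\langle -a^2a_n\rangle q'\cong\langle -a_n\rangle q'$, so $(4)$ is not even strictly needed there. The one place where your write-up is an assertion rather than a proof is the ``direct bookkeeping argument'' in $(1)$; to make it airtight you would either carry out that computation explicitly (checking that the odd parts of the two factors anticommute in the graded product but can be made to commute after adjoining the quaternion correction) or invoke the Brauer--Wall group formalism, which is exactly what this step amounts to.
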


We now prove the existence of $10$-dimensional anisotropic forms with a given Witt invariant and discriminant, excluding the case of trivial discriminant and split Witt invariant. Part two of the following proof is a quadratic form theoretic version of the case $n=10$ in the proof of \cite[Proposition 6]{Tits1990}.

\begin{proposition}\label{existence split isotropic clifford}
Let $q$ be a quadratic form of dimension ten. Let $C$ be the Clifford algebra of $q$. If $C_0$ is not split then there exists an anisotropic quadratic form $q'$ over a field extension $k\subset K$ such that $q_K$ has the same Witt invariant, discriminant and dimension as $q'$.
\end{proposition}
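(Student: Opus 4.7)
Let $\delta := d_\pm(q)$ and $c := c(q) \in \Br(k)$. The plan is to construct, over a suitable field extension $K/k$, an anisotropic $10$-dimensional form of the shape $q' = \alpha \perp \phi$, where $\alpha$ is a $6$-dimensional Albert form and $\phi$ is a $4$-dimensional form. After choosing scalars $u_i, v_i$ for $1 \leq i \leq 3$ transcendental over a preliminary extension of $k$, I would take
\[
\alpha := \langle u_1, v_1, -u_1 v_1, -u_2, -v_2, u_2 v_2\rangle,\qquad \phi := \langle -\delta, u_3, v_3, -u_3 v_3\rangle,
\]
so that a direct computation with \cref{properties of Cliff invar} yields $d_\pm(q') = \delta$ and $c(q') = [(u_1,v_1)_K] + [(u_2,v_2)_K] + [(u_3,v_3)_K]$; the invariants then match those of $q_K$ provided the quaternion algebras $(u_i,v_i)$ are chosen so that their Brauer sum equals $c_K$.

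To arrange this, I would first apply the Merkurjev--Panin--Wadsworth index reduction formulas (via passage to function fields of generalized Severi--Brauer varieties, already central to the paper) to pass from $k$ to an extension $K_0$ over which $\ind(c_{K_0}) \leq 8$ while preserving the nonsplitness of $C_0$, that is, keeping $(\delta_{K_0}, c_{K_0}) \neq (1,0)$. By Merkurjev's theorem on $2$-torsion of the Brauer group, $c_{K_0}$ then admits a decomposition $[Q_1] + [Q_2] + [Q_3]$ into three quaternion classes. Passing to the purely transcendental extension $K := K_0(u_1,v_1,u_2,v_2,u_3,v_3)$ and replacing each $Q_i$ by $(u_i,v_i)_K$, using that the same Brauer class can be represented by quaternions with generic symbols (as in the proof of \cref{Existence of anisotropic}), I obtain the presentation required for the construction.

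The main obstacle is anisotropy of $q'$. The Albert form $\alpha$ is anisotropic because the biquaternion $(u_1,v_1)_K \otimes (u_2,v_2)_K$ has index $4$ by the algebraic independence of its four generators, and an Albert form is isotropic if and only if its underlying biquaternion has index at most two. The form $\phi$ is anisotropic because its pure Pfister part $\langle u_3,v_3,-u_3 v_3\rangle$ embeds into $\langle\langle u_3,v_3\rangle\rangle$, which does not represent $\delta$ whenever $u_3,v_3$ are algebraically independent over a field containing $\delta$. Finally, $\alpha \perp \phi$ remains anisotropic by the same Pfister-form, transcendental-variable inductive argument that underlies \cref{Existence of anisotropic}, namely that adjoining a fresh pair of transcendentals $u_3,v_3$ to the base cannot create isotropy in an already anisotropic sum. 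The subtle point, where the hypothesis that $C_0$ is non-split is essential, is the index-reduction step: one must verify that the Severi--Brauer function fields used can be chosen so as to reduce $\ind(c)$ to at most $8$ without simultaneously trivializing both $c$ and $\delta$, and the nonsplit hypothesis on $C_0$ is precisely the Brauer-theoretic obstruction that persists through such a reduction.
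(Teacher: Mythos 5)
Your overall strategy (Albert forms, generic transcendentals, index reduction via Severi--Brauer function fields) is in the right spirit, but there is a fatal gap at the step where you ``replace each $Q_i$ by $(u_i,v_i)_K$, using that the same Brauer class can be represented by quaternions with generic symbols.'' This is false: if $u_i,v_i$ are algebraically independent transcendentals over $K_0$, then $(u_i,v_i)_K$ is ramified along the divisor $u_i=0$, whereas $Q_{i,K}$ is unramified (it is defined over $K_0$), so the two are never Brauer equivalent. You are therefore caught in a dilemma: if the slots are generic, then $c(q')=[(u_1,v_1)]+[(u_2,v_2)]+[(u_3,v_3)]$ is a sum of generic quaternion classes and does \emph{not} equal $c(q_K)$, so the invariants do not match; if instead you take actual presentations $Q_i=(a_i,b_i)$ with $a_i,b_i\in K_0$, then every anisotropy argument you give --- ``index $4$ by algebraic independence of the four generators,'' the Pfister form not representing $\delta$, and the inductive Springer-type step --- collapses, since nothing forces $(a_1,b_1)\otimes(a_2,b_2)$ to be division or the total form to be anisotropic. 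A secondary problem is the reduction to three quaternions: exponent $2$ and index $\leq 8$ do not imply that a Brauer class is a sum of exactly three quaternion classes (controlling the number of symbols needed is delicate; cf. the Amitsur--Rowen--Tignol indecomposable algebras of degree $8$ and exponent $2$), so that step also needs justification.

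The paper's proof avoids both problems. It first passes to $k(\sqrt{\delta})$ to make the discriminant trivial (using the hypothesis on $C_0$ to see that $C$ stays nonsplit), then uses the generalized Severi--Brauer variety and Blanchet's index reduction to bring $\ind(C)$ all the way down to $2$, so that $c(q)$ becomes a \emph{single} quaternion class $[Q]$ with $Q$ division. The key trick you are missing is how to combine one fixed class with generic data: one takes the Albert form $q_0$ of $Q_{K'}\otimes(t_1,t_2)_{K'}$ with $t_1,t_2$ generic (anisotropic because this tensor product is division, by index-multiplicativity over rational function fields), which has Witt invariant $[Q]+[(t_1,t_2)]$, and then cancels the unwanted generic class by adding $t_3\cdot\langle 1,-t_1,-t_2,t_1t_2\rangle$, whose Witt invariant is again $[(t_1,t_2)]$ and whose fresh scalar $t_3$ preserves anisotropy. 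If you want to salvage your construction, you need some analogue of this cancellation mechanism; as written, the invariants of your $q'$ cannot be made to agree with those of $q_K$.
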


\begin{proof}
Let $\delta = d_{\pm}(q)$. We will construct $q$ in three stages. 
\begin{enumerate}

    \item By \cite[Chapter V, Theorem 2.5]{Lam2004} $C_0$ is simple and $Z(C_0)= k(\sqrt{\delta})$. Denote $F= k(\sqrt{\delta})$. We have isomorphisms of $F$-algebras: 
    \begin{align*}
        C_{0,F} &= C_0 \otimes_k F \\
                            &= C_0 \otimes_F ( F \otimes_k F) \\
                            &= C_0 \otimes_F (F \oplus F) = C_0 \oplus C_0.
    \end{align*}
    It follows that $C_F$ is not split by part $(3)$ of \cite[Chapter V, Theorem 2.5]{Lam2004} and Wedderburn's theorem. Therefore extending scalars to $F$ we may assume $C$ is not split and $\delta$ is a square.
    
    \item Assume that $C$ is Brauer equivalent to a quaternion division algebra $Q=(a,b)$. Define the following central division algebra over the field extension $K'=k(t_1,t_2)$:
    $$ E := Q_{K'} \otimes_{K'} ( t_1, t_2)_{K'}.$$
    By \cite[Theorem 4.8, Chapter III]{Lam2004} $E$ is a division algebra with an associated anisotropic Albert form:
    $$ q_0 = q_{Q_{K'}} \perp \langle -1\rangle q_{(t_1,t_2)_{K'}}.$$
    Here for any quaternion algebra $D$, $q_D$ denotes the restriction of the norm of $D$ to the subspace of pure quaternions. Let $I$ denote the fundamental ideal in the Witt ring $W(K')$. By \cref{properties of Cliff invar} the Witt invariant determines a homomorphism $c: I^2 \to \Br(K')$. Since in $I^2$, $q_0$ is equal to the difference of the norm forms of $Q_{K'}$ and $(t_1,t_2)_{K'}$ we have:
    $$c(q_0)=[Q_{K'}] + [(t_1, t_2)_{K'}].$$ 
    The form 
    $$q_1 := \langle 1 , -t_1, -t_2, t_1 t_2\rangle $$
    has Witt invariant $[(t_1, t_2)_{K'}]$ and trivial signed discriminant by \cref{properties of Cliff invar}. We can now define $K= k(t_1,t_2,t_3)$ and the sought after form $q'$:
    $$q' := q_0 \perp t_3  q_1.$$
    By \cite[Exercise 1, Chapter IX]{Lam2004} $q'$ is anisotropic. Using \cref{properties of Cliff invar} again we see:
    \begin{align*}
        c(q') &= c(q_0) + c(q_1) + [( d_{\pm}(q_0), d_{\pm}(q_1))_K]\\
              &= [Q_K].
    \end{align*}
    Finally $d(q')= d(q_0)d(q_1)=-1$ so $d_{\pm}(q')=1$ as required.
    \item Assume that $C$ is an arbitrary central simple algebra of exponent two and $C$ is not split.  Let $X$ be the generalized Severi-Brauer variety of right ideals of $C$ of dimension $2\deg(C)$. By \cite[Theorem 3]{Blanchet1991} the index of $C_{k(X)}$ is two. Therefore  $C_{k(X)}$ is Brauer equivalent to some quaternion division algebra with center $k(X)$. We can now proceed as we did in stage two.
    
\end{enumerate}
\end{proof}

\begin{proposition}\label{existence of anisotropic disc}
Let $k$ be a field. Let $\delta\in k^*$ be an element which is not a square. For any $a\in k^*$ there exists a ten dimensional quadratic form over a field extension $k\subset L$ which has Witt invariant $(a,\delta)_L$ and discriminant $\delta$.
\end{proposition}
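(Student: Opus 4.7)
The plan is to exhibit the required form by an explicit orthogonal-sum construction, taking $L = k$ (the statement does not demand anisotropy, so no field extension is needed). Set
$$q' := q_0 \perp \langle a, -a\delta\rangle,$$
where $q_0$ is any eight-dimensional $k$-form with $d_\pm(q_0) = 1$ and $c(q_0) = 0$---for instance $q_0 = \langle 1, -1, 1, -1, 1, -1, 1, -1\rangle$, four copies of the hyperbolic plane, for which both invariants vanish by iterating \cref{properties of Cliff invar}(1).

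The dimension of $q'$ is ten by construction. For the signed discriminant, $d(\langle a, -a\delta\rangle) = -a^2\delta$ is $-\delta$ in square classes, giving $d_\pm(\langle a, -a\delta\rangle) = \delta$, and multiplicativity yields $d_\pm(q') = \delta$. For the Witt invariant, write $\langle a, -a\delta\rangle = \langle a\rangle \cdot \langle 1, -\delta\rangle$; since $c(\langle 1, -\delta\rangle) = 0$ and $d_\pm(\langle 1, -\delta\rangle) = \delta$, part (4) of \cref{properties of Cliff invar} gives $c(\langle a, -a\delta\rangle) = [(a, \delta)_L]$. Applying part (1) of the same lemma to $q' = q_0 \perp \langle a, -a\delta\rangle$ then produces
$$c(q') = c(q_0) + [(a, \delta)_L] + [(1, \delta)_L] = [(a, \delta)_L],$$
as required.

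I do not foresee any real obstacle, since the argument reduces to a direct application of \cref{properties of Cliff invar}. The only minor subtleties are keeping careful track of the distinction between $d$ and $d_\pm$ and invoking the bilinearity of the symbol (together with the identity $(a,-a) = 0$ if one prefers to compute $c(\langle a, -a\delta\rangle)$ straight from the definition rather than via part (4)). Should the intended application also require $q'$ to be anisotropic, by analogy with \cref{existence split isotropic clifford}, one can instead take $L = k(t_1, t_2, t_3)$ and replace $q_0$ by the anisotropic generic three-fold Pfister form $\langle\langle t_1, t_2, t_3\rangle\rangle$, which has the same trivial invariants; the main additional step would then be verifying anisotropy of the resulting orthogonal sum by iterated use of Springer's theorem with respect to the $t_i$-adic valuations on $L$.
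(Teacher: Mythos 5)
The statement you are proving has dropped the word \emph{anisotropic}: the proposition's label, the paper's own proof, and the way it is invoked in step (3) of the $D_5$ classification (where it must supply an \emph{anisotropic} torsor in $H^1(K,\SO(q))$) all show that the ten-dimensional form is required to be anisotropic, and without that requirement the proposition is trivial and useless. Your main construction $q_0\perp\langle a,-a\delta\rangle$ with $q_0$ four hyperbolic planes computes the invariants correctly but has Witt index at least $4$, so it only proves the (vacuous) literal statement. You anticipated this, but your proposed repair does not work. Writing $\langle\langle t_1,t_2,t_3\rangle\rangle\perp\langle a,-a\delta\rangle$ as a sum of residue forms with respect to the $t_i$-adic valuations, the residue attached to the trivial multi-index is $\langle 1\rangle\perp\langle a,-a\delta\rangle=\langle 1,a,-a\delta\rangle$, while all other residues are one-dimensional; iterated Springer therefore shows your form is anisotropic over $k(t_1,t_2,t_3)$ \emph{if and only if} $\langle 1,a,-a\delta\rangle$ is anisotropic over $k$. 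This fails in general: for $a=-1$ the residue form is the isotropic $\langle 1,-1,\delta\rangle$, and for $k=\bQ$, $\delta=2$, $a=1$ it is $\langle 1,1,-2\rangle$, again isotropic. The generic variables cannot decouple the eight-dimensional ``filler'' from the binary form that carries the prescribed invariants.

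This tension between anisotropy and the prescribed pair $\big(\disc,\,c\big)=\big(\delta,\,(a,\delta)\big)$ with $\delta$ a nonsquare is the actual content of the proposition, and the paper resolves it by an entirely different route: it applies \cref{Anisotropic hermitian involution} to get an anisotropic unitary involution $\tau$ on $M_4(k(\sqrt{\delta}))_L$, passes to the discriminant algebra $D(B_L,\tau)$ --- a split degree-six algebra whose canonical orthogonal involution is adjoint to an anisotropic six-dimensional form $q$ with $\disc q=\delta$ and split even Clifford algebra --- and then corrects the Witt invariant by rescaling $q$ and adjoining $z\cdot n(Q)$ over $L(z)$, where anisotropy of the resulting ten-dimensional form follows from Lam, Ch.~IX, Ex.~1. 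To salvage your approach you would need an eight-dimensional anisotropic form with trivial invariants whose orthogonal sum with $\langle a,-a\delta\rangle$ remains anisotropic, and producing such a form is essentially as hard as the proposition itself.
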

\begin{proof}
Define a $k$-algebra $B=M_4(K)$ where $K = k(\sqrt{\delta})$. 
By \cref{Anisotropic hermitian involution} there exists an anisotropic 4-dimensional quadratic form $q_0$ over a transcendental field extension $k\subset L$ with trivial discriminant such that $\delta$ is not a square in $L$ and $\tau = \gamma \otimes_L \sigma_q$ is anisotropic where $\gamma$ is a unitary involution on $K$ and $\sigma_q$ is an orthogonal involution of $M_n(L)$ adjoint to $q$. By \cite[lemma 10.33]{Knus1998} the discriminant algebra $D(B_L,\tau)$ is split. By \cite[Theorem 15.24]{Knus1998} the canonical orthogonal involution on the discriminant algebra $D(B_L,\tau)$ has Clifford algebra isomorphic to $B_L$. Let $q$ be a quadratic form adjoint to this orthogonal involution.  By \cite[Proposition 15.39]{Knus1998} $q$ is anisotropic because $\tau$ is. Let $C(q)$ be its Clifford algebra. The even part of $C(q)$ is split since it is isomorphic to $B_L$. By \cite[Theorem 15.24]{Knus1998} we have $\disc(q) = \delta$ and \cite[Theorem 2.5, Chapter V]{Lam2004} implies $C(q)$ is split by $L(\sqrt{\delta})$. Therefore $C(q)$ is Brauer equivalent to a quaternion algebra $Q$ over $L$. We now separate into two cases:
\begin{enumerate}
    \item Assume $(a,\delta)_L$ is split. We want choose $q$ such that $Q$ is a division algebra. If $Q$ is split, scale $q$ by an element $t\in L$ such that $(t,\delta)_k$ is non-split to assume $Q$ is a division algebra using \cref{properties of Cliff invar} (we might have to enlarge the field to find such an element $t$).  Let $n(Q)$ be the norm form of $Q$.  We define a quadratic form over $L(z)$:
$$q_1 = q \perp z n(Q). $$
By \cite[Chapter IX, Exercise 1]{Lam2004} $q_1$ is anisotropic. By \cref{properties of Cliff invar} we have:
$$c(q_1) =  c(q) + c(n(Q)) = 2[Q] =0.$$
The discriminant of $q_1$ is $\delta$, same as the discriminant of $q$ and so we are done.
    
    \item  Assume $(a,\delta)_L$ is a division algebra. By \cite[Theorem 4.1, Chapter III]{Lam2004}, $Q = (b,\delta)_L$ for some $b\in k^*$. Scaling $q$ by $b$ we may assume $Q$ is split using \cref{properties of Cliff invar}.  Let $n(a,\delta)_L$ be the norm form of $(a,\delta)_L$.  We define a quadratic form over $L(z)$:
$$q_1 = q \perp z n(a,\delta)_L. $$
By \cite[Chapter IX, Exercise 1]{Lam2004} $q_1$ is anisotropic. By \cref{properties of Cliff invar} we have:
$$c(q_1) =  c(q) + c(n(a,\delta)_L) = [(a,\delta)_L].$$
Clearly the discriminant of $q_1$ is $\delta$ and so we are done.
\end{enumerate}
\end{proof}

We finish this section with a lemma showing the existence of division algebras with certain Schur index properties. It is a strengthening of \cite[Lemma 5.1]{Merkurjev1996}. 
\begin{lemma}\label{existence of division}
Let $k$ be a field and let $d\mid n$ be positive integers. Assume the square free part of $n$ divides $d$. Then there exists a central division algebra $\epsilon$ of index $n$ and exponent $d$ over a field extension $k\subset K$ such that $D_K\otimes_K \epsilon$ is a division algebra for any central division algebra $D$ over $k$.
\end{lemma}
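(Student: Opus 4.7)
The plan is to construct $\epsilon$ as a tensor product of generic cyclic algebras over a large iterated Laurent series field, so that tensoring with any $D/k$ just ``multiplies indices''. Following the setup of \cite[Lemma 5.1]{Merkurjev1996}, I first reduce to the prime-power case and then use the tame ramification theory of division algebras over henselian valued fields.

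For the reduction, I write $n = \prod_i p_i^{a_i}$ and $d = \prod_i p_i^{b_i}$; the square-free part hypothesis forces $1 \le b_i \le a_i$ for each $i$. Every central division algebra over any field decomposes uniquely as a tensor product of primary components of coprime $p_i$-power indices, and a tensor product of division algebras with pairwise coprime indices remains division. So it suffices to construct, for each prime $p$ and each $1 \le b \le a$, a field extension $k \subset K_p$ and a division algebra $\epsilon_p/K_p$ of index $p^a$ and exponent $p^b$ such that $D \otimes_k \epsilon_p$ is a division algebra for every $p$-primary $D/k$. Taking a compositum of the $K_p$'s (equivalently, one large iterated Laurent series field with disjoint sets of variables for each prime) and a tensor product of the $\epsilon_p$'s then solves the original problem.

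For the prime-power case I first enlarge $k$ to $k'$ so that $\zeta_{p^b} \in k'$ while every $p$-primary $D/k$ stays division after base change. This is arranged by first passing to a suitable transcendental extension so that $k' = k(\zeta_{p^b})$ preserves the $p$-primary Schur indices in play, or by using $p$-algebras built from Artin--Schreier extensions in the characteristic-$p$ case. I then set
\[
K_p := k'((x_0))((y_0))((x_1))((y_1)) \cdots ((x_{a-b}))((y_{a-b})), \qquad \epsilon_p := (x_0, y_0)_{p^b} \otimes \bigotimes_{i=1}^{a-b} (x_i, y_i)_p.
\]
Equip $K_p$ with its natural rank-$2(a-b+1)$ henselian valuation $v$, of value group $\bZ^{2(a-b+1)}$ and residue field $k'$. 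With respect to $v$, each factor $(x_i, y_i)_{p^{e_i}}$ is tame, totally ramified, and has tame symbol a primitive $p^{e_i}$-th root of unity; the values $v(x_i), v(y_i)$ are $\bZ$-independent in $v(K_p^\times)$. The structure theorem of Tignol--Wadsworth for tame division algebras over henselian valued fields then shows that $\epsilon_p$ is itself tame and totally ramified, of index $p^a$ and of exponent exactly $p^b$.

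Any $p$-primary division algebra $D/k'$ is unramified in $K_p$ with residue $D$, hence of index $\ind(D)$ over $K_p$. The Jacob--Wadsworth theorem on the tensor product of tame unramified and tame totally ramified algebras whose value-group data is in ``general position'' yields
\[
\ind_{K_p}(D_{K_p} \otimes_{K_p} \epsilon_p) \;=\; \ind(D) \cdot \ind(\epsilon_p) \;=\; \ind(D) \cdot p^a \;=\; \deg(D_{K_p} \otimes_{K_p} \epsilon_p),
\]
so $D_{K_p} \otimes_{K_p} \epsilon_p$ is a division algebra. The main obstacle is making this index-multiplicativity precise---it is exactly the content of the tame-ramification structure theory, applicable here because the values of the $x_i, y_i$ are $\bZ$-independent modulo $p\, v(K_p^\times)$. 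Combining the constructions across all primes $p \mid n$ via the reduction of the second paragraph then produces the desired $K$ and $\epsilon$.
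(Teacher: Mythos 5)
Your overall architecture matches the paper's: reduce to the prime-power case by primary decomposition, then build a prime-power algebra over a big transcendental extension that "multiplies indices" against every $D$ from the base. The valuation-theoretic index computation (inertial tensor tame totally ramified is division, by Morandi/Jacob--Wadsworth) is also sound where it applies. But there is a genuine gap at the step where you adjoin $\zeta_{p^b}$. The extension $k(\zeta_{p^b})/k$ has degree dividing $p^{b-1}(p-1)$, which is divisible by $p$ once $b\geq 2$, so it can strictly reduce the index of $p$-primary division algebras over $k$. Concretely, take $k=\bQ$, $n=d=4$, so $p=2$, $b=2$: the quaternion algebra $D=(-1,-1)_{\bQ}$ splits over $\bQ(i)\subset K$, hence $D_K\otimes_K\epsilon$ has index at most $p^a$, strictly less than its degree $2p^a$, and is not a division algebra. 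No transcendental preprocessing can repair this: the lemma quantifies over all $D$ defined over the original $k$, and $\bQ(i)/\bQ$ unavoidably sits inside your $K$. (The same issue infects the prime-to-$p$ components of a general $D$, since $k(\zeta_{p^b})/k$ need not be linearly disjoint from their splitting behavior either.)

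The paper avoids roots of unity altogether by using Merkurjev's generic cyclic algebra: $F/K_0$ is the degree-$p^s$ cyclic extension obtained from the cyclic permutation action on $k(t_1,\dots,t_{p^s})$, and $\epsilon=(F(t)/K_0(t),\sigma,t)$; \cite[Lemma 5.1]{Merkurjev1996} gives both $\ind(\epsilon)=\exp(\epsilon)=p^s$ and the preservation of $D$ being division, and an induction on $r-s$ (tensoring with further generic degree-$p$ cyclic algebras) boosts the index to $p^r$ while keeping the exponent $p^s$. If you want to keep your Laurent-series picture, you must replace the symbol $(x_0,y_0)_{p^b}$ by a cyclic algebra attached to such a root-of-unity-free generic cyclic extension of the residue field. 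A secondary unaddressed point is characteristic $p$: tame ramification theory does not apply to the $p$-primary factor there, and your appeal to Artin--Schreier extensions is only a gesture, whereas the generic cyclic construction works uniformly in all characteristics.
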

\begin{proof}
Throughout the proof let $D$ be an arbitrary central division algebra over $k$. By the primary decomposition theorem the proof immediately reduces to the case, where $n = p^r$  and $d = p^s$ for some prime p and some integers $1 \leq s \leq r$. Assume $n=p^r$, $d=p^s$ for some $1\leq s\leq r$. Let $F=k(t_1,\dots,t_{p^s})$ and denote $K_0 = F^\sigma$ where $\sigma$ is the automorphism permuting $t_1,\dots,t_{p^s}$ cyclically. $F/K_0$ is a cyclic Galois extension of degree $p^s$. We proceed by induction on $r-s$. If $r-s=0$, let $K = K_0(t)$ and denote by $\epsilon$ the cyclic algebra $(F(t)/K, \sigma, t)$ as in \cite[Lemma 5.1]{Merkurjev1996}. By \cite[Lemma 5.1]{Merkurjev1996}, $D_K\otimes \epsilon$ is a division algebra and $\exp(\epsilon)=\ind(\epsilon)=p^s$. For the induction step assume $r-s\geq 1$ and let $\epsilon_0$ be a division algebra with index $p^{r-1}$ and exponent $p^s$ over a field extension $K_1$ such that $D_{K_1}\otimes \epsilon_0$ is a division algebra. By the base case we can find a division algebra $\epsilon_1$ over a field extension $K_1\subset K$ of index and exponent $p$ such that $D_K\otimes \epsilon_{0,K}\otimes \epsilon_1$ is a division algebra. Put $\epsilon = \epsilon_{0,K}\otimes \epsilon_1$. By assumption $\epsilon$ is a division algebra of index $p^r$ since $\epsilon\otimes D_K$ is a division algebra and the exponent of $\epsilon$ is $p^s$.  This finishes the proof of the lemma.
\end{proof}

%%%%%%%%%%%%%%%%%%%%%%%%%%%%%%%%%%%%%%%%%%%%%%%%%

\section{Classification of simple strongly isotropic algebraic groups}\label{Classification of simple strongly isotropic algebraic groups}
In this section we prove \cref{simple strongly isotropic}. By \cref{simply connected split groups} we only have to consider three families of classical groups.

\begin{lemma}\label{strongly isotropic families}
A strongly isotropic simple group $G$ is of type $A_n$, $C_n$ or $D_5$.
\end{lemma}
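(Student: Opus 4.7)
My plan is to reduce the statement to \cref{simply connected split groups} via two base-change and covering arguments, both of which are already developed in \cref{preliminaries}.

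First, I would pass from $G$ to its simply connected cover. Since $G$ is simple, the simply connected cover $\pi \colon \widetilde{G} \to G$ exists, is defined over $k$, and is a central isogeny with $\widetilde{G}$ a simply connected simple group of the same type as $G$. Since $G$ is reductive and $\ker \pi$ is central, part (2) of \cref{covering thm1} applied to $\pi$ gives that strong isotropy of $G$ implies strong isotropy of $\widetilde{G}$.

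Next, I would pass to a splitting field. Observe that strong isotropy is preserved under arbitrary base change: this is immediate from the definition, since for every extension $K \subset L$ of a splitting field $K$ of $\widetilde{G}$, a torsor $[\gamma] \in H^1(L, \widetilde{G}_K) = H^1(L, \widetilde{G})$ is isotropic by the strong isotropy of $\widetilde{G}$ over $k$. Choosing $K$ to split $\widetilde{G}$, the group $\widetilde{G}_K$ is a split simply connected simple group over $K$ of the same type as $G$, and it is strongly isotropic over $K$.

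Finally, \cref{simply connected split groups} forces the type of $\widetilde{G}_K$, and hence of $G$, to lie in $\{A_n, C_n, D_5\}$. There is no substantive obstacle here; the argument is a straightforward concatenation of \cref{covering thm1}(2), the definitional stability of strong isotropy under base change, and Tits' theorem as recorded in \cref{simply connected split groups}.
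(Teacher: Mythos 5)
Your proof is correct and follows essentially the same route as the paper's: both reduce to \cref{simply connected split groups} by combining \cref{covering thm1}(2) with the observation that strong isotropy is inherited under base change, the only difference being that you take the universal cover over $k$ before extending scalars, while the paper first base changes to $\overline{k}$ and then passes to the cover.
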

\begin{proof}
By definition $ G_{\overline{k} }$ is strongly isotropic and split. By \cref{covering thm1} the universal cover of $G_{\overline{k}}$ is strongly isotropic as well and therefore it is of type $A_n$, $C_n$ or $D_5$ by \cref{simply connected split groups}.  Thus $G$ is of one of these types as well.
\end{proof}

We break the proof of \cref{simple strongly isotropic} into cases according to the type of $G$. These are contained in the subsections below. In each case we show that $G$ is strongly isotropic if and only if it is isomorphic to one of the groups listed in \cref{simple strongly isotropic}. Our main tool throughout the proof of \cref{simple strongly isotropic} is the analysis of twisted flag varieties of classical simple groups carried out in \cite{Merkurjev1996} and \cite{Merkurjev1998}. These are connected to the question of isotropy by \cref{isotropy and flags}.

\subsection{Type \texorpdfstring{$A_{l}$}{A}}
Throughout this subsection let $G$ be a simple group of type $A_l$. 

\begin{lemma}
If $G$ is of outer type then it is not strongly isotropic.
\end{lemma}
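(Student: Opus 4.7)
The plan is to reduce to the simply connected cover of $G$ and then exhibit an anisotropic torsor over a suitable field extension by means of \cref{Anisotropic hermitian involution}. Let $\widetilde{G}$ denote the simply connected cover of $G$ and let $\pi : \widetilde{G} \to G$ be the associated central isogeny. By \cref{covering thm1}(2) applied to $\pi$, if $G$ is strongly isotropic then so is $\widetilde{G}$; hence by contraposition it suffices to show that $\widetilde{G}$ is not strongly isotropic. Since $G$ is of outer type $A_l$ we must have $l \geq 2$ (the diagram $A_1$ has trivial automorphism group), and the Tits classification realizes $\widetilde{G} \cong SU(B_0, \tau_0)$ for some central simple algebra $B_0$ of degree $n = l+1 \geq 3$ over a quadratic separable extension $K = k(\sqrt{\delta})$ of $k$, with $\tau_0$ a unitary involution restricting to the non-trivial element of $\Gal(K/k)$ on $K$.

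Apply \cref{Anisotropic hermitian involution} to $B_0$: this yields a field extension $L/k$ such that $K_L := K \otimes_k L$ is still a field, an isomorphism $B_{0,L} \cong M_n(K_L)$, and an anisotropic unitary involution $\tau$ on $M_n(K_L)$. The pair $(M_n(K_L), \tau)$ is an $L$-form of $(B_{0,L}, \tau_{0,L})$ and so determines a class in $H^1(L, \PGU(B_{0,L}, \tau_{0,L}))$. Because the underlying algebra $M_n(K_L)$ is split, this class lifts along the central isogeny $\widetilde{G}_L \twoheadrightarrow \PGU(B_{0,L}, \tau_{0,L})$ to a torsor $[\gamma] \in H^1(L, \widetilde{G})$ whose associated twist is $SU(M_n(K_L), \tau)$. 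This twisted group is anisotropic, since its proper $L$-parabolic subgroups are the stabilizers of $\tau$-isotropic right ideals of $M_n(K_L)$ and $\tau$ is anisotropic. By \cref{isotropy and flags}, anisotropy of ${}_\gamma\widetilde{G}$ implies anisotropy of $[\gamma]$, so $\widetilde{G}$ is not strongly isotropic, and the reduction above completes the proof.

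The main technical step is the lifting of $(M_n(K_L), \tau)$ from $H^1(L, \PGU)$ to $H^1(L, \widetilde{G})$. The obstruction lies in $H^2(L, Z(\widetilde{G}))$ via the connecting map of the exact sequence $1 \to Z(\widetilde{G}) \to \widetilde{G} \to \PGU(B_{0,L}, \tau_{0,L}) \to 1$, and under the standard identification (where $Z(\widetilde{G}) \cong \mathrm{R}^{(1)}_{K/k}\mu_n$) this connecting map is controlled by the Brauer class of the underlying central simple algebra in $\Br(K_L)$. Since $[M_n(K_L)] = 0$, the obstruction vanishes and the lift exists.
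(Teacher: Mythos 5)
Your reduction to the simply connected cover via \cref{covering thm1}(2) is valid, and it is in fact a different reduction from the paper's, which instead passes to the similitude group $\mathrm{GU}(A,\sigma)$ and invokes \cref{reduction to semisimple}. The construction of the anisotropic involution via \cref{Anisotropic hermitian involution} and the anisotropy of the twisted flag varieties are also fine. The problem is your lifting step from $H^1(L,\PGU)$ to $H^1(L,\widetilde{G})$, which is precisely the step the paper's route avoids, and it contains a genuine gap.

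The obstruction to lifting the class of $(M_n(K_L),\tau)$ along $SU(B_{0,L},\tau_{0,L})\to\PGU(B_{0,L},\tau_{0,L})$ lives in $H^2(L,Z)$ with $Z=\ker\bigl(N:R_{K_L/L}\mu_n\to\mu_n\bigr)$, and this group is \emph{not} detected by the Brauer class over $K_L$ when $n$ is even: from $1\to Z\to R_{K_L/L}\mu_n\to\mu_n\to 1$ one sees that $\ker\bigl(H^2(L,Z)\to H^2(K_L,\mu_n)\bigr)$ is a quotient of $H^1(L,\mu_n)$, which one can only kill for $n$ odd (the kernel of restriction to the quadratic extension is $2$-torsion and $n$-torsion). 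Concretely, for $n$ even the weight $\omega_{n/2}$ is fixed by the $*$-action and contributes a Tits algebra defined over $L$ itself, the discriminant algebra; the lift can only exist if $D(M_n(K_L),\tau)$ and $D(B_{0,L},\tau_{0,L})$ agree in $\Br(L)$. Your $\tau=\gamma\otimes\sigma_q$ has $q$ of trivial discriminant, so $D(M_n(K_L),\tau)$ is split, whereas $D(B_{0,L},\tau_{0,L})=D(B_0,\tau_0)_L$ is arbitrary and need not be split. Hence for even $n$ your class in $H^1(L,\PGU)$ may fail to be in the image of $H^1(L,SU)$, and the argument collapses exactly where it matters (for odd $n$ it is complete, since then $H^2(L,Z)$ injects into $\Br(K_L)$). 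The paper sidesteps this by never lifting: it works with $\mathrm{GU}(A,\sigma)$, whose torsors classify unitary involutions on $A$ directly, so the anisotropic involution is already a torsor of the relevant group. To repair your route for even $n$ you would either have to do the same, or strengthen \cref{Anisotropic hermitian involution} so that the discriminant of the constructed hermitian form (equivalently, the discriminant algebra of $\tau$) can be prescribed, in the spirit of what the paper does for quadratic forms in \cref{existence of anisotropic disc}.
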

\begin{proof}
By \cite[Theorem 26.9]{Knus1998} if $G$ is of outer type then it is the special unitary group of a simple algebra with involution $(A,\sigma)$ with center $Z(A) = K$ where $k\subset K$ is a Galois extension of order $2$. Let $\tilde{G} = \text{GU}(A,\sigma)$ be the general unitary group. Since $G$ is the derived group of $\tilde{G}$ it suffices to prove $\tilde{G}$ is not strongly isotropic by \cref{reduction to semisimple}. By  \cref{Anisotropic hermitian involution} there exists an anisotropic unitary involution $\tau$ on $A_L$ for some field extension $k\subset L$ in which $\alpha$ is not a square. Let $[\gamma]\in H^1(L,\tilde{G})$ be the torsor corresponding to $\tau$ as in \cite[Page 401]{Knus1998}. For any parabolic subgroup $P\subset \tilde{G}$ the twisted flag variety ${}_\gamma (\tilde{G}/P) = {}_\gamma (G/P\cap G)$ consists of flags of $\tau$-isotropic ideals of $A_L$ (see \cite[Page 172]{Merkurjev1998}) and therefore has no $K$-points since $\tau$ is anisotropic. Therefore $[\gamma]$ is anisotropic by \cref{isotropy and flags}.
\end{proof}

It remains to consider groups of inner type. By the classification of simple classical groups these are of the form $\SL_n(D)/\mu_d$ for some integers $d\mid n\deg(D)$ and a central division algebra $D$ over $k$. By \cref{reduction to semisimple}  $\SL_n(D)/\mu_d$ is strongly isotropic if and only if $\GL{n}(D)/\mu_d$ is. For the rest of the subsection fix $G = \GL{n}(D)/\mu_d$. Our goal is to show that the square free part of $n$ divides $d$ if and only if $G$ is not strongly isotropic. The following lemma gives us a grasp on the Galois cohomology of $G$ using the natural projection $\Ad: G\to \overline{G}= \PGL_n(D)$. Recall the cohomology set $H^1(k,\overline{G})$ classifies central simple algebras of degree $n\ind(D)$.

\begin{lemma}\label{not simply connected An}
  Let $A$ be a central simple algebra of degree $n\ind(D)$. Let $[\gamma]\in H^1(k,\overline{G})$ be the corresponding torsor.  $[\gamma]$ lies in the image of natural map $ H^1(k,G)\to H^1(k,\overline{G})$ if and only if $d([A]-[D])=0$ in the Brauer group of $k$.
\end{lemma}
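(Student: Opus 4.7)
The plan is to realize $H^1(k,G)\to H^1(k,\overline{G})$ as part of a long exact cohomology sequence, and identify the connecting map as multiplication by $d$ composed with the standard connecting map for $\PGL_n(D)$.

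First I would set up the short exact sequence
$$1 \longrightarrow \Gm/\mu_d \longrightarrow G \longrightarrow \overline{G} \longrightarrow 1,$$
obtained from the fact that the center of $\GL_n(D)$ is $\Gm$ (scalar matrices), the kernel of $\GL_n(D)\to G$ is $\mu_d\subset \Gm$, and the kernel of $\GL_n(D)\to \overline{G}$ is all of $\Gm$. The resulting cohomology sequence
$$H^1(k,\Gm/\mu_d)\longrightarrow H^1(k,G)\longrightarrow H^1(k,\overline{G})\xrightarrow{\;\partial\;} H^2(k,\Gm/\mu_d)$$
combined with the isomorphism $\Gm/\mu_d\cong \Gm$ (via $[t]\mapsto t^d$) and Hilbert 90 shows that $[\gamma]\in H^1(k,\overline{G})$ lifts to $H^1(k,G)$ if and only if $\partial[\gamma]=0$ in $\Br(k)$.

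Next I would compute $\partial$ by comparing with the classical sequence $1\to \Gm \to \GL_n(D)\to \overline{G}\to 1$. These two sequences fit in a commutative diagram with identity on the right and the natural quotient $\Gm\twoheadrightarrow \Gm/\mu_d$ on the left. Under the identification $\Gm/\mu_d\cong \Gm$ above, this quotient becomes the $d$-th power map, which induces multiplication by $d$ on $H^2(k,\Gm)=\Br(k)$. By functoriality of the connecting map,
$$\partial[\gamma] \;=\; d\cdot \partial_0[\gamma],$$
where $\partial_0:H^1(k,\overline{G})\to \Br(k)$ is the connecting map of the top sequence.

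Finally I would invoke the standard computation of $\partial_0$: since the top sequence is the twist by $M_n(D)$ of the split sequence $1\to \Gm\to \GL_N\to \PGL_N\to 1$ with $N=n\ind(D)$, the map $\partial_0$ sends a central simple algebra $A$ of degree $N$ to $[A]-[M_n(D)]=[A]-[D]$. Combined with the previous step this yields $\partial[A]=d([A]-[D])$, completing the proof.

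The only subtle point I anticipate is the bookkeeping in the comparison of short exact sequences, in particular making sure that the identification $\Gm/\mu_d\cong \Gm$ is made consistently so that the induced map on $H^2$ is genuinely multiplication by $d$ rather than by some other power; everything else reduces to standard facts.
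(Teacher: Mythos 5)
Your argument is correct and rests on the same two facts the paper uses: the identification $\Gm/\mu_d\cong\Gm$ via the $d$-th power map (which turns the connecting map into multiplication by $d$ on $\Br(k)$) and the formula $\partial_0[\gamma]=[A]-[M_n(D)]$ for the twisted sequence, which is exactly the content of \cite[Proposition I.44]{Serre1997} cited in the paper. The only difference is organizational: the paper outsources the split case ($D=k$) to \cite[Lemma 2.6]{Berhuy2005} and then twists the whole statement by $M_n(D)$, whereas you run the comparison of the two central extensions $1\to\Gm\to\GL{n}(D)\to\overline{G}\to1$ and $1\to\Gm/\mu_d\to G\to\overline{G}\to1$ directly over the non-split group; this makes the proof self-contained but is the same computation. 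One caveat (shared with the paper, so not a gap relative to it): when $\Char(k)$ divides $d$ the group scheme $\mu_d$ is not smooth, so the exactness of these sequences on $k_s$-points, and hence the Galois-cohomology boundary maps you use, requires a word of justification or an fppf reformulation.
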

 \begin{proof}
 The case where $D$ is split was proven in \cite[Lemma 2.6]{Berhuy2005}. The general case follows from the split case by twisting the exact sequence $1\to \Gm \to \GL{n\ind(D)} \to \PGL_{n\ind(D)}\to 1$ by the cohomology class of $M_n(D)$ in $H^1(k,\PGL_{n\ind(D)})$ using \cite[Proposition I.44]{Serre1997} and \cite[Proposition I.35]{Serre1997}.
\end{proof}

Our next goal is to characterize isotropy of $\overline{G}$-torsors.

\begin{lemma}\label{criterion for isotropy An}
A torsor $[\gamma] \in H^1(k,\PGL_n(D))$  corresponding to a central simple algebra $A$ is anisotropic if and only if $$\ind(A) =  \gcd(\ind(A),\ind(D)) n.$$
In particular if $D = k$, $[\gamma]$ is anisotropic if and only if $A$ is a division algebra.
\end{lemma}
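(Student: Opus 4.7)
The plan is to translate isotropy of $[\gamma]$ into an arithmetic statement about Schur indices, via the dictionary between parabolic subgroups of $\PGL_1(B)$ and flags of right ideals in a central simple algebra $B$. Since $\PGL_n(D) = \Aut(M_n(D))$ and $H^1(k,\PGL_n(D))$ parametrizes $k$-forms of $M_n(D)$, i.e., central simple $k$-algebras of degree $n\ind(D)$, the torsor $[\gamma]$ corresponds to such an algebra $A$ and the twisted group is ${}_\gamma \PGL_n(D) \cong \PGL_1(A)$.

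For any central simple $k$-algebra $B$ of degree $N$, the $k$-rational parabolic subgroups of $\PGL_1(B)$ are the stabilizers of flags of right ideals of $B$, and their types (parametrized by compositions $(m_1,\dots,m_s)$ of $N$ over $\bar k$) are realized over $k$ exactly when each partial sum $m_1,m_1+m_2,\dots$ is a valid reduced dimension of a right ideal of $B$, equivalently when each part $m_i$ is divisible by $\ind(B)$; cf.\ the analysis of twisted flag varieties in \cite[Section 3]{Merkurjev1996}. Applying this description to $B = M_n(D)$ and $B = A$, and invoking \cref{isotropy and flags}, $[\gamma]$ is isotropic if and only if there exists a composition $(m_1,\dots,m_s)$ of $n\ind(D)$ with $s \geq 2$ whose parts are all divisible by both $\ind(D)$ and $\ind(A)$, i.e., by $\mathrm{lcm}(\ind(D),\ind(A))$.

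Such a composition exists if and only if $\mathrm{lcm}(\ind(D),\ind(A))$ is a proper divisor of $n\ind(D)$; since the $\mathrm{lcm}$ automatically divides $n\ind(D)$, this simply says $\mathrm{lcm}(\ind(D),\ind(A)) < n\ind(D)$. Hence $[\gamma]$ is anisotropic if and only if $\mathrm{lcm}(\ind(D),\ind(A)) = n\ind(D)$, and using the identity $\mathrm{lcm}(a,b)\gcd(a,b) = ab$ this rearranges to $\ind(A) = n\gcd(\ind(A),\ind(D))$, giving the main statement. The ``in particular'' assertion follows by setting $\ind(D) = 1$, in which case the condition becomes $\ind(A) = n = \deg(A)$, i.e., $A$ is a division algebra.

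The main technical input is the correct description of $k$-rational parabolic types of $\PGL_1(B)$ as compositions of $\deg(B)$ with parts divisible by $\ind(B)$; once this is in place the rest is elementary arithmetic. The subtle point in invoking \cref{isotropy and flags} is that a \emph{common} parabolic type must be realized over $k$ in both $G$ and ${}_\gamma G$, which is why the divisibility condition involves the $\mathrm{lcm}$ of the two indices rather than either one singly. This is also what produces the nontrivial anisotropic examples in which $A$ is not a division algebra but still has index ``incompatible'' with $\ind(D)$.
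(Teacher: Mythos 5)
Your proof is correct and follows essentially the same route as the paper: both translate isotropy via \cref{isotropy and flags} into the existence of a (flag of) right ideal(s) whose reduced dimension is simultaneously a multiple of $\ind(D)$ and of $\ind(A)$, and then conclude by the identity $\mathrm{lcm}(\ind(A),\ind(D))\gcd(\ind(A),\ind(D))=\ind(A)\ind(D)$. The only cosmetic difference is that you work with condition (2) of that lemma (rational parabolics of ${}_\gamma G\cong\PGL_1(A)$) while the paper works with condition (3) (rational points of the twisted flag variety); the underlying input from \cite{Merkurjev1996} is the same.
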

\begin{proof}
We use the notation and results of \cite[Page 561]{Merkurjev1996}. Denote $\ind(D)= r$. Any parabolic subgroup of $P\subset \overline{G}$ is of type $(r n_1,\dots,r n_t)$ for some integers $1\leq n_1 \leq \dots\leq n_t\leq n$. The twisted flag variety ${}_\gamma (G/P)$ is the variety of flags of ideals $$ I_1\subset\dots \subset I_t\subset A, \text{ for all }j:\ \dim_k I_j = n r^2 n_j.$$ Therefore by \cref{isotropy and flags} $[\gamma]$ is isotropic if and only if $A$ has an ideal of dimension $n r^2m_1$ for some $m_1\lneq r n$. On the other hand the ideals of $A$ are of dimension $n r\ind(A) m_2,$ for $0\leq m_2 \leq \frac{n r}{\ind(A)}$. Therefore $A$ is anisotropic if and only if:
$$\frac{\ind(A)r}{\gcd(\ind(A),r)} = \text{lcm}(\ind(A),r) = n r.$$
Since $r=\ind(D)$ this is what we wanted to prove. 
\end{proof}

To prove the existence of anisotropic $G$-torsors we will need to know division algebras with certain Schur index properties exist.

We can now put it all together and finish the case of type $A_{l}$. Recall that we set $G = \GL{n}(D)/\mu_d$ and our goal is to prove $G$ is strongly isotropic if and only if there exists a prime divisor of $n$ which does not divide $d$.

\begin{proposition}
$G$ is strongly isotropic if and only if there exists a prime divisor $p$ of $n$ which does not divide $d$.
\end{proposition}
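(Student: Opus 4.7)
The plan is to reduce the problem to a question about central simple algebras via the adjoint representation. By \cref{adjoint thm1}, a torsor $[\gamma] \in H^1(K, G)$ is isotropic if and only if $\Ad_*[\gamma] \in H^1(K, \overline{G})$ is isotropic. I would then combine \cref{not simply connected An}, which identifies the image of $H^1(K, G) \to H^1(K, \overline{G})$ with classes corresponding to central simple algebras $A$ of degree $n\ind(D)$ over $K$ satisfying $d([A] - [D_K]) = 0$ in $\Br(K)$, with the anisotropy criterion $\ind(A) = \gcd(\ind(A), \ind(D_K)) \cdot n$ from \cref{criterion for isotropy An}. This recasts the claim as follows: $G$ fails to be strongly isotropic if and only if, for some $K/k$, there exists a central simple algebra $A$ over $K$ of degree $n\ind(D)$ satisfying both $d([A]-[D_K]) = 0$ and the anisotropy identity.

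For the backward implication, I would assume some prime $p$ divides $n$ but not $d$ and rule out the existence of such an $A$. Since $p \nmid d$, the relation $d([A] - [D_K]) = 0$ forces the $p$-primary parts of $[A]$ and $[D_K]$ in $\Br(K)$ to agree, so $v_p(\ind(A)) = v_p(\ind(D_K))$. Applying $v_p$ to the anisotropy identity then yields
\[ v_p(\ind(A)) = \min(v_p(\ind(A)), v_p(\ind(D_K))) + v_p(n) = v_p(\ind(A)) + v_p(n), \]
forcing $v_p(n) = 0$ and contradicting $p \mid n$.

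For the forward implication I would argue the contrapositive: assume every prime divisor of $n$ divides $d$, so that the squarefree part of $n$ divides $d' := \gcd(n,d)$, and $d' \mid n$. These are precisely the hypotheses of \cref{existence of division}, which furnishes a field extension $k \subset K$ and a central division algebra $\epsilon$ over $K$ of index $n$ and exponent $d'$ such that $A := D_K \otimes_K \epsilon$ is a division algebra. A direct check then gives $\deg(A) = n\ind(D)$, so $A$ represents a class in $H^1(K,\overline{G})$; the class $[A] - [D_K] = [\epsilon]$ has order $d' \mid d$, so this class lifts to $H^1(K,G)$; and since $A$ is a division algebra, $\ind(A) = n\ind(D) = \gcd(n\ind(D), \ind(D)) \cdot n$, witnessing anisotropy.

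The main subtlety is reconciling the divisibility $d \mid n\ind(D)$ from the definition of $G$ with the stronger hypothesis $d \mid n$ that \cref{existence of division} requires. Passing to $d' = \gcd(n,d)$ resolves this cleanly, since an algebra of exponent $d'$ automatically has exponent dividing $d$, and the condition that every prime divisor of $n$ divides $d$ is unchanged under this replacement.
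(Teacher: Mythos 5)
Your proposal is correct and follows essentially the same route as the paper: it reduces to the adjoint group via \cref{adjoint thm1}, combines \cref{not simply connected An} with the index criterion of \cref{criterion for isotropy An}, constructs the anisotropic torsor from \cref{existence of division}, and rules out anisotropic torsors in the other direction by a $p$-primary/valuation argument identical in substance to the paper's. The only cosmetic differences are your use of $\gcd(n,d)$ in place of the squarefree part of $n$ when invoking \cref{existence of division}, and phrasing the contradiction with $v_p$ rather than with a maximal exponent $r$.
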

\begin{proof}
\begin{itemize}
    \item Denote by $t$ the square free part of $n$. We assume $t$ divides $d$ and show $G$ is not strongly isotropic. By \cref{existence of division} there exists a field extension $k\subset K$ and a central division algebra $\epsilon$ over $K$ with index $n$ and period $t$ such that:
    $$\ind(\epsilon\otimes_K D_K) = n \ind(D_K).$$
    Set $A = \epsilon\otimes_K D_K$ and let $[\gamma]\in H^1(K,\overline{G})$ be the corresponding torsor. By assumption $t\mid d$ and so $(A\otimes_K D^{op}_K)^d$ is split. Therefore $[\gamma]$ lies in the image of natural map $ H^1(k,G)\to H^1(k,\overline{G})$ by \cref{not simply connected An}. On the other hand \cref{criterion for isotropy An} implies $[\gamma]$ is anisotropic. Therefore $G$ is not strongly isotropic
    \item Let $p$ be a prime divisor of $n$ which does not divide $d$. To show $G$ is strongly isotropic we assume it admits anisotropic torsors over a field extension $K$ and derive a contradiction. Assume $[\gamma]$ is an anisotropic torsor in $H^1(K,G)$ corresponding to a central simple algebra $A$. By \cref{not simply connected An}  $A^d$ is Brauer equivalent to $D_K^d$. Therefore:
    $$\ind(A^d) = \ind(D_K^d)\mid \ind(D_K).$$
    Since $[\gamma]$ is anisotropic,  \cref{criterion for isotropy An} implies:
    $$\ind(A) = \gcd(\ind(A),\ind(D_K))n.$$
    Let $r$ be the largest exponent such that $p^r$ divides $\gcd(\ind(A),\ind(D_K))$. By assumption $p\mid n$ and so $p^{r+1}$ divides $\ind(A)$. Since $p$ does not divide $d$, $p^{r+1}$ divides $\ind(A^d)=\ind(D_K^d)$ and so $p^{r+1}$ divides $\ind(D_K)$, see \cite[Theorems 5.5, 5.7]{Saltman1999}. Therefore $p^{r+1}\mid \gcd(\ind(A),\ind(D_K))$. This contradicts maximality of $r$ and finishes the proof.
\end{itemize}  

\end{proof}

\subsection{Type \texorpdfstring{$C_l$}{C}}

Let $G$ be a simple group of type $C_n$. We show  $G$ is strongly isotropic if and only if $G = \Sp_{2n}$. One direction is easy: since $H^1(K, \Sp_{2n}) = 1$ for every $K/k$, $\Sp_{2n}$ is obviously strongly isotropic. Our goal is thus to prove the opposite implication: If $G$ is strongly isotropic, then it must be isomorphic to $\Sp_{2n}$. 

\begin{lemma}\label{Cn simply connected}
If $G$ is not simply connected, then it is not strongly isotropic.
\end{lemma}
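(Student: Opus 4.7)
Since $G$ is a simple algebraic group of type $C_n$ and the fundamental group of $\Sp_{2n}$ is $\mu_2$, the only non-simply-connected possibility for $G$ is the adjoint form. Thus my starting point will be the classification of simple classical groups of type $C_n$: it yields $G \cong \Sp(A_0, \sigma_0)/\mu_2$ for a central simple $k$-algebra $A_0$ of degree $2n$ equipped with a symplectic involution $\sigma_0$. My aim is to produce, over some field extension $K/k$, a torsor $[\gamma] \in H^1(K, G)$ whose twist ${}_\gamma G$ is anisotropic.

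The plan rests on the standard description of $H^1(K, G)$ for $G$ an inner form of $\Sp_{2n}/\mu_2$: such torsors are in natural bijection with isomorphism classes of pairs $(A, \sigma)$ over $K$, where $A$ is a central simple $K$-algebra of degree $2n$ and $\sigma$ is a symplectic involution, and under this bijection ${}_\gamma G \cong \Sp(A, \sigma)/\mu_2$. By \cref{adjoint thm1} and \cref{isotropy and flags}, the torsor $[\gamma]$ is isotropic if and only if $\Sp(A, \sigma)$ has a proper parabolic defined over $K$. Since the parabolic subgroups of $\Sp(A, \sigma)$ are stabilizers of flags of totally $\sigma$-isotropic right ideals of $A$, this happens exactly when $\sigma$ is isotropic. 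My task is therefore reduced to exhibiting an anisotropic pair $(A, \sigma)$ of the required type over some extension of $k$.

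For the construction I will appeal to \cref{anisotropic symplectic involution}. Set $K_0 = k(t_1, t_2)$ and let $Q = (t_1, t_2)_{K_0}$, a quaternion division algebra over $K_0$. Then $A' := M_n(Q)$ is a central simple $K_0$-algebra of degree $2n$ and exponent two which is not split. Applying \cref{anisotropic symplectic involution} to $A'$ produces a field extension $K_0 \subset K$ on which $A'_K$ carries an anisotropic symplectic involution $\sigma$. The resulting pair $(A'_K, \sigma)$ gives the required anisotropic torsor in $H^1(K, G)$, so $G$ is not strongly isotropic.

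The only delicate point I anticipate is the bookkeeping at the start: verifying that forms of an arbitrary inner form $G$ of $\Sp_{2n}/\mu_2$ are parametrized by pairs $(A, \sigma)$ in the way described, and that the twisted group is correctly identified with $\Sp(A,\sigma)/\mu_2$, so that every anisotropic pair indeed yields an anisotropic $G$-torsor. Once this identification is in hand, the construction step is immediate from \cref{anisotropic symplectic involution}, which does the hard quadratic-form-theoretic work.
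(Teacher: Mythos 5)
Your proposal is correct and follows essentially the same route as the paper: reduce to the adjoint form $\PGSp(A_0,\sigma_0)$, identify its torsors with degree-$2n$ algebras with symplectic involution, invoke \cref{anisotropic symplectic involution} to produce an anisotropic pair over an extension, and conclude via the description of the twisted flag varieties as flags of $\sigma$-isotropic ideals. Your explicit choice of $A'=M_n\bigl((t_1,t_2)_{k(t_1,t_2)}\bigr)$ just makes concrete the non-split exponent-two algebra that the paper leaves implicit, and note that only the unconditional direction of \cref{isotropy and flags} (anisotropy of the twist implies anisotropy of the torsor) is needed, since the adjoint group need not be quasisplit.
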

\begin{proof}
Since $|Z(\Sp_{2n})|=2$ if $G$ is not simply connected it is adjoint. By \cite[Theorem 26.14]{Knus1998} and \cite[29.22, Page 404]{Knus1998}, $G$-torsors correspond to isomorphism classes of central simple algebras of degree $2n$ with symplectic involution $(A,\tau)$. By \cref{anisotropic symplectic involution} after passing to a field extension we may choose $(A,\tau)$ with $\tau$ anisotropic. Then for any parabolic $P\subset G$ the twisted flag variety ${}_\gamma (G/P)$ has no $k$-point since it parametrises flags of $\tau$-isotropic ideals of $A$ by \cite[5.24, Page 53]{Merkurjev1996}. Therefore $[\gamma]$ is anisotropic by \cref{isotropy and flags}.
\end{proof}

The next proposition finishes the case of groups of type $C_n$.

\begin{proposition}
If $G$ is strongly isotropic then $G\cong \Sp_{2n}$.
\end{proposition}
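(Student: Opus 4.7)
The plan is to reduce the simply connected case to the adjoint case already treated in \cref{Cn simply connected}. By \cref{Cn simply connected} I may assume $G$ is simply connected, in which case $G=\Sp(A,\sigma)$ for some central simple algebra $A$ of degree $2n$ equipped with a symplectic involution $\sigma$; the existence of $\sigma$ forces $[A]$ to have exponent dividing $2$ in $\Br(k)$. If $A$ is split, then $G\cong \Sp_{2n}$ and there is nothing to prove, so I will suppose for contradiction that $A$ is non-split, i.e.\ $[A]$ has exponent exactly $2$, and produce an anisotropic torsor $[\gamma]\in H^1(K,G)$ over some field extension $k\subset K$.

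By \cref{anisotropic symplectic involution} there is a field extension $k\subset K$ and an anisotropic symplectic involution $\tau$ on $A_K$. The pair $(A_K,\tau)$ determines a class $[\eta]\in H^1(K,\overline{G})$. I claim that $[\eta]$ lifts to a class $[\gamma]\in H^1(K,G)$. Indeed, the central extension $1\to\mu_2\to G\to\overline{G}\to 1$ yields the exact sequence
\[
H^1(K,G)\longrightarrow H^1(K,\overline{G})\stackrel{\partial}{\longrightarrow} H^2(K,\mu_2)=\Br_2(K),
\]
and the connecting map $\partial$ sends a class represented by $(A',\sigma')$ to $[A']-[A_K]$ in $\Br_2(K)$. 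Since $(A_K,\tau)$ has underlying algebra $A_K$ itself, $\partial[\eta]=0$, so $[\eta]$ lifts to some $[\gamma]\in H^1(K,G)$.

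Finally, exactly as in the proof of \cref{Cn simply connected}, for any parabolic $\overline{P}\subset \overline{G}$ the twisted flag variety ${}_\eta(\overline{G}/\overline{P})$ parametrizes flags of $\tau$-isotropic right ideals of $A_K$, and no proper such flags exist because $\tau$ is anisotropic. By \cref{isotropy and flags}, $[\eta]$ is anisotropic, and \cref{adjoint thm1} then forces $[\gamma]$ itself to be anisotropic, contradicting the strong isotropy of $G$. The most delicate step is the identification of the image of $H^1(K,G)\to H^1(K,\overline{G})$; this is what ensures that the anisotropic involution produced by \cref{anisotropic symplectic involution} actually lifts from the adjoint group to $G$.
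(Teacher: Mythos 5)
Your proof is correct, and the core engine is the same as the paper's (reduce to the simply connected case via \cref{Cn simply connected}, produce an anisotropic symplectic involution $\tau$ on $A_K$ via \cref{anisotropic symplectic involution}, and read off anisotropy from the twisted flag variety of $\tau$-isotropic ideals via \cref{isotropy and flags}), but you handle the lifting step differently. You pass to the adjoint group and lift the class of $(A_K,\tau)$ along $H^1(K,G)\to H^1(K,\overline G)$ by computing the connecting map of $1\to\mu_2\to G\to\overline G\to 1$; your formula $\partial(A',\sigma')=[A']-[A_K]$ is the standard twisted-form computation (the same twisting trick the paper uses for type $A$ in \cref{not simply connected An}), so $(A_K,\tau)$ indeed lifts, and \cref{adjoint thm1} then gives an anisotropic class in $H^1(K,G)$ itself. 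The paper instead sidesteps the connecting map entirely: it replaces $G=\Sp(A,\sigma)$ by the similitude group $\widetilde G=\mathrm{GSp}(A,\sigma)$, whose torsors classify symplectic involutions on the fixed algebra $A$ directly (so no lifting question arises), and transfers strong isotropy between $\widetilde G$ and its derived group $G$ using \cref{reduction to semisimple}, which applies because $\Rad(\widetilde G)=\Gm$ is split. Your route is more explicit about where the anisotropic $G$-torsor lives, at the cost of having to justify the connecting-map formula (which you should reference or derive by twisting); the paper's route avoids that computation but leans on the similitude group and the split-radical hypothesis in \cref{reduction to semisimple}.
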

\begin{proof}
By \cref{Cn simply connected} and \cite[Theorem 26.14]{Knus1998}, $G$ is the symplectic group of some central simple algebra $A$ with a symplectic involution $\sigma$. Since all symplectic involutions on $M_n(k)$ are isomorphic it suffices to check $A$ must be split. Assume the contrary: $A$ is not split. Let $\tilde{G} = \text{GSp}(A,\sigma)$ be the group of symplectic similitudes. Since $G$ is the derived group of $\tilde{G}$ it suffices to prove $\tilde{G}$ is not strongly isotropic by \cref{reduction to semisimple}. $\tilde{G}$-torsors correspond to conjugacy classes of symplectic involutions on $A$ (see \cite[29.23]{Knus1998}). By \cref{anisotropic symplectic involution} after extending scalars to a field extension $K$ we may assume there exists an anisotropic symplectic involution $\tau$ on $A$. Let $[\gamma]\in H^1(K,\tilde{G})$ be a torsor corresponding to $\tau$. For any parabolic $P\subset \tilde{G}$ the twisted flag variety ${}_\gamma (\tilde{G}/P)= {}_\gamma (G/P\cap G)$ has no $K$-point since it parametrises flags of $\tau$-isotropic ideals of $A$ by \cite[5.24, Page 53]{Merkurjev1996}. This contradicts our assumption that $G$ is strongly isotropic by \cref{isotropy and flags} and therefore we conclude $A$ must be split.
\end{proof}

\subsection{Type \texorpdfstring{$D_5$}{D}}
Let $G$ be a simple group of type $D_5$. If $G = \Spin(q)$ for some regular 10-dimensional quadratic form with trivial discriminant and split Clifford algebra then $G$ is strongly isotropic by \cref{simply connected split groups} because $G$ is a strong inner form of $\Spin_{10}$ (see \cite[Page 140, III.3.2 Example b]{Serre1997}). It remains to prove that if $G$ is strongly isotropic then $G = \Spin(q)$ for some 10-dimensional quadratic form with trivial discriminant and split Clifford algebra. Let $q$ be a (regular) quadratic form. Torsors $[\gamma] \in H^1(k,\SO(q))$ correspond to quadratic forms $q'$ with the same discriminant as $q$, see \cite[29.29]{Knus1998}. The group $\SO(q)$ is isotropic if and only if $q$ is isotropic if and only if $\SO(q)$ has a parabolic which is the stabilizer of a one dimensional completely isotropic subspace of $k^{\dim(q)}$ \cite[5.49]{Merkurjev1996}.

\begin{lemma}\label{criterion for isotropy D5}
Let $q$ be an isotropic quadratic form. A torsor $[\gamma] \in H^1(k,\SO(q))$ corresponding to a quadratic form $q'$ is isotropic if and only if $q'$ is isotropic.
\end{lemma}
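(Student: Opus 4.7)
The strategy is to apply \cref{isotropy and flags} to translate isotropy of $[\gamma]$ into a statement about $q'$. Under the dictionary of \cite[29.29]{Knus1998}, the twisted group ${}_\gamma\SO(q)$ is identified with $\SO(q')$, and for any proper parabolic $P\subset \SO(q)$ the quotient $\SO(q)/P$ is a generalised flag variety parametrising flags of totally $q$-isotropic subspaces of prescribed dimensions. Twisting by $\gamma$ replaces $q$ by $q'$ throughout, so ${}_\gamma(\SO(q)/P)$ parametrises the analogous flags of totally $q'$-isotropic subspaces.

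For the backward implication, suppose $q'$ is isotropic. Since $q$ is isotropic by hypothesis, $\SO(q)$ contains a $k$-defined proper parabolic $P$, namely the stabiliser of a $k$-rational totally $q$-isotropic line. The twisted variety ${}_\gamma(\SO(q)/P)$ is then the projective quadric of $q'$, which has a $k$-rational point because $q'$ is isotropic. By \cref{isotropy and flags}, $[\gamma]$ admits reduction of structure to $P$. For the forward implication, suppose $[\gamma]$ admits reduction of structure to some proper $k$-defined parabolic $P\subset \SO(q)$. By \cref{isotropy and flags}, ${}_\gamma(\SO(q)/P)$ has a $k$-rational point, which produces a $k$-defined flag of totally $q'$-isotropic subspaces whose smallest member has positive dimension; in particular $q'$ admits a nonzero $k$-rational isotropic vector.

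The whole argument is essentially a translation through the form--torsor correspondence, so no substantive obstacle is expected. The only point that warrants a word of justification is that in type $D_n$ the parabolics stabilising maximal totally isotropic subspaces split into two $\SO(q)$-conjugacy classes, but this causes no trouble here, since in either case a $k$-point of ${}_\gamma(\SO(q)/P)$ still exhibits a $k$-defined totally isotropic subspace of $q'$ of positive dimension.
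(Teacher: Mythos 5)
Your proof is correct and takes essentially the same route as the paper's: both reduce the claim to \cref{isotropy and flags} together with the identification of twisted flag varieties of $\SO(q)$ with varieties of totally $q'$-isotropic flags. The paper runs the forward direction through the chain ``$[\gamma]$ isotropic $\Rightarrow {}_\gamma\SO(q)=\SO(q')$ isotropic $\Rightarrow q'$ isotropic'' (parts $(1)\iff(2)$ of that lemma) while you use rational points of the twisted flag variety (part $(3)$); these are interchangeable.
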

\begin{proof}
Since $q$ is isotropic, $\SO(q)$ has a parabolic $P$ which is the stabilizer of a one dimensional completely isotropic subspace of $k^{\dim(q)}$. By \cref{isotropy and flags} $[\gamma]$ is isotropic if and only if ${}_\gamma \SO(q) = \SO(q')$ has a parabolic of the same type as $P$ if and only if $\SO(q')$ is isotropic. Therefore by the remark preceding the lemma $[\gamma]$ is isotropic if and only if $q'$ is isotropic.
\end{proof}

\begin{lemma}\label{D5 simply connected}
If $G$ is not simply connected then it is not strongly isotropic.
\end{lemma}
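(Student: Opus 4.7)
The plan is to exhibit an anisotropic $G$-torsor over some field extension $K/k$, thereby showing $G$ is not strongly isotropic. The argument first reduces to the case $G = \SO(A,\sigma)$ for a degree-$10$ central simple $k$-algebra $(A,\sigma)$ with orthogonal involution, and then constructs such a torsor in terms of quadratic forms.

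For the reduction, note that the simply connected cover of $G$ has the form $\Spin(A,\sigma)$ for such a pair $(A,\sigma)$, and its geometric center $\mu_4$ has a unique proper non-trivial subgroup scheme $\mu_2$, which is characteristic and hence defined over $k$. Therefore the intermediate quotient $\SO(A,\sigma) := \Spin(A,\sigma)/\mu_2$ exists as a $k$-group, and the isogeny $\Spin(A,\sigma) \twoheadrightarrow G$ factors as $\Spin(A,\sigma) \twoheadrightarrow \SO(A,\sigma) \twoheadrightarrow G$. If $G = \SO(A,\sigma)$ we address it directly; otherwise $G = \PGO^+(A,\sigma)$ is adjoint, and \cref{covering thm1}(2), applied to the central quotient $\SO(A,\sigma) \twoheadrightarrow G$ (with reductive source and central kernel $\mu_2$), implies that strong isotropy of $G$ forces strong isotropy of $\SO(A,\sigma)$. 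It therefore suffices to falsify strong isotropy of $\SO(A,\sigma)$.

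Next, pass to a field extension $K/k$ over which $A$ splits and $\sigma_K$ is adjoint to an isotropic ten-dimensional quadratic form $q$; such a $K$ always exists (e.g., take a generic splitting field of $A$ and then further extend to make $q$ isotropic). Over $K$ we have $\SO(A,\sigma)_K \cong \SO(q)$, and \cite[29.29]{Knus1998} identifies $H^1(K,\SO(q))$ with the isomorphism classes of ten-dimensional quadratic forms over $K$ having the same signed discriminant $\delta := d_{\pm}(q)$ as $q$. Since $q$ is isotropic, \cref{criterion for isotropy D5} says such a torsor is isotropic if and only if the corresponding quadratic form is isotropic. It therefore remains to exhibit an anisotropic ten-dimensional quadratic form $q'$ over some extension of $K$ with signed discriminant $\delta$: for instance, over $K(t_1,\ldots,t_9)$ one may take $q' = \langle -t_1,\ldots,-t_9,c\rangle$ with $c \in K^*$ adjusted so that $d_{\pm}(q') = \delta$, whose anisotropy follows from \cref{Existence of anisotropic}.

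The step I expect to require the most care is the first reduction — specifically, identifying the intermediate quotient $\Spin(A,\sigma)/\mu_2$ with the classical $\SO$-group of $(A,\sigma)$ uniformly across the inner and outer types of $D_5$, and verifying that the hypotheses of \cref{covering thm1}(2) apply to the induced central quotient $\SO(A,\sigma) \to \PGO^+(A,\sigma)$. By contrast, the final quadratic-form construction is routine once the reduction is in place.
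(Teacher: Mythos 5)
Your proof is correct and follows essentially the same route as the paper's: reduce via \cref{covering thm1} to showing that the special orthogonal group of an isotropic ten-dimensional form admits anisotropic torsors, then combine \cref{criterion for isotropy D5} with a generic anisotropic form of the prescribed discriminant (the paper shortcuts your reduction by base-changing to $\overline{k}$ at the outset, where the covering group is $\SO(q_0)$ for $q_0$ hyperbolic and $\sqrt{-1}\in\overline{k}$ disposes of the discriminant bookkeeping). One small slip in the final construction: since $d_{\pm}(\langle -t_1,\dots,-t_9,c\rangle)=t_1\cdots t_9\,c$ modulo squares, no $c\in K^*$ can make this equal to $\delta\in K^*$; you should instead take $c=\delta\,t_1\cdots t_9$, for which the residue-form induction behind \cref{Existence of anisotropic} still yields anisotropy.
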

\begin{proof}
As in the proof of \cref{strongly isotropic families} by extending scalars to the algebraic closure we may assume $G$ is split and $\sqrt{-1}\in k$. In that case if $G$ is not simply connected it is covered by $\SO(q_0)$ where $q_0$ is a 10-dimensional hyperbolic quadratic form. By \cref{covering thm1} it suffices to show $\SO(q_0)$ admits anisotropic torsors. Since $\sqrt{-1}\in k$, by \cref{Existence of anisotropic} there exists a 10-dimensional anisotropic form $q$ with trivial discriminant over a field extension $k\subset K$. Since $\disc(q_0) = 1$, $q$ corresponds to an anisotropic torsor in $H^1(k,\SO(q_0))$ by \cref{criterion for isotropy D5}.
\end{proof}

The next proposition finishes the case of type $D_5$ and therefore the proof of \cref{simple strongly isotropic}.

\begin{proposition}
If $G$ is strongly isotropic then $G = \Spin(q)$ for some regular ten dimensional quadratic form with trivial discriminant and split Clifford algebra.
\end{proposition}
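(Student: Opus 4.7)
The plan is to verify, in order, three conditions on $G$: (a) the underlying central simple algebra is split, (b) the relevant quadratic form has split Clifford algebra, and (c) it has trivial discriminant. Each will be argued by contradiction: assume the condition fails and produce an anisotropic $G$-torsor over some field extension.

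First I would invoke Lemma \ref{D5 simply connected} to reduce to the case where $G$ is simply connected. By the classification of simply connected simple classical groups, one can then write $G = \Spin(A,\sigma)$ for some central simple $k$-algebra $A$ of degree $10$ equipped with an orthogonal involution $\sigma$. The target is to show $A$ must be split (so $G = \Spin(q)$ for a $10$-dimensional form $q$), $\disc(q) = 1$, and $C(q)$ is split.

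For the splitness of $A$, I would adapt the strategy of Lemma \ref{anisotropic symplectic involution}. Passing to the function field of the generalised Severi-Brauer variety of right ideals of dimension $2\deg(A)=20$ in $A$ drops the index to $2$ by \cite[Theorem 3]{Blanchet1991}, so $A \cong M_5(Q)$ for some quaternion division algebra $Q$. Orthogonal involutions on $M_5(Q)$ correspond to skew-hermitian forms on $Q^5$, and an analogue of Corollary \ref{Quaternion hermitian} in the skew-hermitian setting would produce an anisotropic pair $(A,\sigma')$ after a further transcendental extension. The twisted flag varieties of the associated torsor in $H^1(K,\overline G)$ parametrise flags of $\sigma'$-isotropic ideals of $A$ and are therefore empty, so the torsor is anisotropic; Corollary \ref{adjoint thm1} lifts this to an anisotropic $G$-torsor, contradicting strong isotropy.

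With $A$ split, write $G = \Spin(q)$. Suppose $C_0(q)$ is not split. Then Proposition \ref{existence split isotropic clifford} supplies an anisotropic $10$-dimensional form $q'$ over an extension $K/k$ with the same dimension, discriminant and Witt invariant as $q_K$. By \cite[III.3.2.b]{Serre1997}, $q'$ defines an $\SO(q)$-torsor that lifts to a $\Spin(q)$-torsor (the obstruction in $\Br(K)[2]$ vanishes precisely because $c(q') = c(q_K)$); after extending $K$ further so that $q$ becomes isotropic, Lemma \ref{criterion for isotropy D5} shows this torsor is anisotropic, again a contradiction. Now assume $C_0(q)$ is split but $\disc(q) = \delta$ is non-square over some $K/k$. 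Since $C_0(q)$ is split over $K(\sqrt\delta)$, the class $c(q_K) \in \Br(K)[2]$ is killed upon extension to $K(\sqrt\delta)$, hence $c(q_K) = (b,\delta)_K$ for some $b \in K^*$. Applying Proposition \ref{existence of anisotropic disc} with $a = b$ yields an anisotropic $10$-dimensional form $q'$ over some extension $L/K$ with matching discriminant and Witt invariant, producing an anisotropic $\Spin(q)$-torsor by the same argument, a contradiction.

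The main obstacle is the first reduction. The excerpt does not include an analogue of Corollary \ref{Quaternion hermitian} for skew-hermitian forms on $Q^n$, so one would either prove such a lemma or find an alternative construction of anisotropic orthogonal involutions on $M_5(Q)$. Once that is in hand, the remaining two steps follow routinely from Propositions \ref{existence split isotropic clifford} and \ref{existence of anisotropic disc} and the cohomological dictionary provided by Lemma \ref{criterion for isotropy D5}.
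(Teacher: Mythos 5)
Your reduction to $G=\Spin(A,\sigma)$ and your treatment of the Clifford invariant and the discriminant (via Propositions \ref{existence split isotropic clifford} and \ref{existence of anisotropic disc} together with the lifting criterion of \cite[III.3.2.b]{Serre1997}) essentially match the paper's steps. The genuine gap is in your first step, and it is not only the missing skew-hermitian analogue of \cref{Quaternion hermitian} that you flag. Even granting an anisotropic orthogonal involution $\sigma'$ on $A_K\cong M_5(Q)$, the resulting class in $H^1(K,\overline{G})$ (or in $H^1(K,\mathrm{GO}(A,\sigma))$) need not lie in the image of $H^1(K,G)$ for $G=\Spin(A,\sigma)$: that image is cut out by conditions on $\disc(\sigma')$ and on the Clifford algebra of $(A,\sigma')$, which a generic skew-hermitian construction does not control. \cref{adjoint thm1} only transfers isotropy along $\Ad_*$; it does not let you start from an arbitrary anisotropic $\overline{G}$-torsor. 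Nor can you imitate the $C_n$ argument by passing to a similitude group whose derived subgroup is $G$ and all of whose torsors are involutions on $A$: the derived group of $\mathrm{GO}^+(A,\sigma)$ is $\mathrm{O}^+(A,\sigma)$, not $\Spin(A,\sigma)$, so the trick from \cref{reduction to semisimple} used for symplectic involutions is unavailable here.

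The paper circumvents this entirely and never constructs anisotropic involutions on non-split algebras. It first rules out $[C]\notin\langle[A_Z]\rangle$ by using Amitsur's theorem to split $A$ generically \emph{while keeping $C$ non-split}, reducing to the quadratic-form situation where \cref{existence split isotropic clifford} and the lifting criterion apply. It then deduces from the fundamental relation $2[C]=[A_Z]$ (with $Z=k(\sqrt{\disc\sigma})$) that $A_Z$ is split and $[C]=0$; if $Z$ is a field it passes to $k(SB(A))$, which splits $A$ without making $\disc(\sigma)$ a square, and invokes \cref{existence of anisotropic disc}. If you want to keep your outline, you must either prove that an anisotropic $(A,\sigma')$ can be produced with prescribed discriminant and Clifford invariant matching those of $\sigma$ (the hard part), or adopt the paper's order of argument, in which splitness of $A$ is a consequence of the Clifford and discriminant steps rather than a prerequisite for them.
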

\begin{proof}
By \cite[Theorem 26.15]{Knus1998} and \cref{D5 simply connected} $G$ is isomorphic to $\Spin(A,\sigma)$ for some central simple algebra with orthogonal involution over $k$. Denote by $C$ the Clifford algebra of $(A,\sigma)$ as defined in \cite[Section 8.B]{Knus1998} and let $Z = Z(C)$.
We need to show $A$ and $C$ are split and $\disc(\sigma)=1$. We break the proof into three steps.
    \begin{enumerate}
        \item Assume the Brauer class $[C]$ is not in the cyclic group generated by $[A_Z]$. By extending scalars to a field extension $k\subset K$ we may split $A$ without splitting $C$ by Amitsur's Theorem (see \cite[Theorem 5.4.1]{Gille2006}). The involution on $A_K$ is adjoint to some 10-dimensional quadratic form $q$. By functoriality of the Clifford algebra and \cite[Proposition 8.8]{Knus1998} the even part of the Clifford algebra of $q$ is isomorphic to $C_K$ and thus is not split. Therefore by  \cref{existence split isotropic clifford} after possibly enlarging $K$ there exists an anisotropic 10-dimensional form $q'$ with the same Witt invariant and discriminant as $q$. Let $[\gamma]\in H^1(K,\SO(q))$ be the anisotropic torsor corresponding to $q'$. By \cite[Page 140 ,III.3.2 Example b]{Serre1997}, $[\gamma]$ lies in the image of the natural map $H^1(K,\Spin(q))\to H^1(K,\SO(q))$. This contradicts our assumption that $G$ is strongly isotropic.
        
        \item Therefore $[C]$ is in the cyclic group generated by the Brauer class $[A_Z]$. For some $n\in \bN$ we have $[C] = n[A_Z]$.  By \cite[Theorem 9.12]{Knus1998}, $C\otimes C$ is Brauer equivalent to $A_Z$. Therefore:
        $$2n[A_Z] = 2[C] = [A_Z].$$
        Since $A$ has exponent two it follows that $A_Z$ is split.  Therefore $$[C] = n[A_Z] =0 .$$
        If $Z = k\times k$ then $A$ is split and $\disc(\sigma)$ is a square by \cite[Theorem 8.10]{Knus1998}. We assume this is not the case (i.e. $Z=k(\sqrt{\disc(\sigma)})$ is a quadratic field extension of $k$) and reach a contradiction. 
        
        \item Let $K = k(SB(A))$ be the function field of the Severi-Brauer variety of $A$. Since $SB(A)$ is absolutely irreducible, $k$ is integrablly closed in $K$ and $\disc(\sigma)$ is not a square in $K$. Therefore by extending scalars to $K$ we may assume $A$ is split without loss of generality. Then $G$ is the spin group of some 10-dimensional quadratic form $q$ with non-trivial discriminant since $\disc(q) = \disc(\sigma)$. The even part of the Clifford algebra $C(q)$ is split since it is isomorphic to $C$. Therefore by \cite[Theorem 2.5, Chapter V]{Lam2004}, $C(q)_Z$ is split. We see that $C(q)_Z$ is Brauer equivalent to a quaternion algebra $(a, \disc(\sigma))_k$ for some $a\in k$ by \cite[Theorem 4.1, Chapter III]{Lam2004}. By \cref{existence of anisotropic disc} there exists an anisotropic 10-dimensional quadratic form $q'$ over a field extension $k\subset K$ with the same Witt invariant and discriminant as $q$. Let $[\gamma]\in H^1(K,\SO(q))$ be the anisotropic torsor corresponding to $q'$. By \cref{criterion for isotropy D5} $[\gamma]$ is anisotropic. By \cite[III.3.2.b, Page 140]{Serre1997}, $[\gamma]$ lies in the image of the natural map $H^1(K,\Spin(q))\to H^1(K,\SO(q))$. This contradicts our assumption that $G$ is strongly isotropic.

    \end{enumerate} 

\end{proof}

%%%%%%%%%%%%%%%%%%%%%%%%%%%%%%%%%%%%%%%%%%%%%%%%%%%%%%%5

\section{A partial classification of strongly isotropic semisimple groups}\label{semisimple section}

Let $G$ be a semisimple group over $k$. When $G$ splits as a direct product of simple groups, we can use \cref{simple strongly isotropic} and \cref{products and strong isotropy} to check if $G$ is strongly isotropic. However in general $G$ is only isogenous to a product of simple groups and as we have seen, strong isotropy is not preserved by isogenies. Consequently, applying \cref{simple strongly isotropic} to check if $G$ is strongly isotropic requires a more careful analysis. In this section we carry out this analysis under certain restrictions on the root system of $G$. In the next section we prove another partial classification result which illustrates the difficulties that arise when we lift these restrictions. Our goal is to prove \cref{strongly isotropic quotient} which we now restate for the readers convenience. 

\begin{theorem}
  Let $G$ be a semisimple group over a field $k$ with $\Char(k)\neq 2$. Assume for any simple factor of $G$ of type $A_{n-1}$ the integer $n$ is squarefree. Then $G$ is strongly isotropic if and only if it admits a simple strongly isotropic quotient.
\end{theorem}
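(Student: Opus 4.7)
The ``$\Leftarrow$'' direction is immediate from \cref{covering thm1}(2): if $H$ is a simple strongly isotropic quotient of $G$, the surjection $G \twoheadrightarrow H$ is a quotient morphism of reductive groups, so strong isotropy of $H$ propagates to $G$.

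For the ``$\Rightarrow$'' direction, assume $G$ is strongly isotropic. Write the simply connected cover as $G^{sc} = \prod_{i=1}^n G_i^{sc}$ with each $G_i^{sc}$ a $k$-simple simply connected factor, and $G = G^{sc}/Z$ with $Z \subseteq Z(G^{sc}) = \prod_i Z(G_i^{sc})$. Let $Z^{(i)}$ denote the projection of $Z$ onto $Z(G_i^{sc})$ and set $H_i := G_i^{sc}/Z^{(i)}$; each $H_i$ is a $k$-simple quotient of $G$, and it suffices to exhibit one index $i$ for which $H_i$ is strongly isotropic.

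Using \cref{versal torsors}, fix a proper parabolic $P \subseteq G$ to which $G$ admits reduction of structure. Its lift to $G^{sc}$ decomposes as $P^{sc} = \prod_i P_i^{sc}$ with some $P_i^{sc}$ proper, and since reduction of structure is preserved when $P$ is enlarged, we may assume $P^{sc} = P_1^{sc} \times \prod_{j \ge 2} G_j^{sc}$ with $P_1^{sc}$ proper in $G_1^{sc}$. With this choice the kernel of $\pi_1 \colon G \twoheadrightarrow H_1$ is contained in $P$, so by \cref{covering thm1}(1) every image $(\pi_1)_*[\gamma] \in H^1(K, H_1)$ admits reduction of structure to the proper parabolic $\pi_1(P) = P_1^{sc}/Z^{(1)}$ of $H_1$. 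Consequently, if the induced map $H^1(K, G) \to H^1(K, H_1)$ is surjective for every field extension $K/k$, then $H_1$ admits reduction of structure to $\pi_1(P)$, and \cref{versal torsors} gives that $H_1$ is strongly isotropic, completing the proof.

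The main obstacle is this surjectivity statement, and the squarefree hypothesis is precisely what is needed to establish it. The kernel of $\pi_1$ is (up to its centre) the semisimple group $\prod_{j \ge 2} G_j^{sc}$, and the obstruction to lifting a class $[\delta] \in H^1(K, H_1)$ to $H^1(K, G)$ lies in an $H^2$-group valued in the central quotient of this kernel, itself a quotient of $\prod_{j \ge 2} Z(G_j^{sc})$ by the appropriate portion of $Z$. For each type-$A$ factor $G_i^{sc}$ with squarefree $n_i$, the centre $\mu_{n_i} = \prod_{p \mid n_i} \mu_p$ splits into primary components; since non-type-$A$ factors contribute only $\mu_2$, $\mu_2 \times \mu_2$, $\mu_3$, or trivial centres, the subgroup $Z \subseteq Z(G^{sc})$ decomposes compatibly into $p$-primary pieces for each prime $p$. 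A prime-by-prime analysis, in which the key point is that multiplication by a unit modulo $p$ is invertible on the $p$-primary parts, yields an explicit construction of a lift of $[\delta]$ and so establishes the required surjectivity. Without the squarefree assumption this primary decomposition breaks down, and \cref{example no quotient} supplies an explicit strongly isotropic $G$ none of whose simple quotients is strongly isotropic.
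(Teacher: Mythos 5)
Your setup for the ``only if'' direction (passing to the universal cover, arranging $\tilde{P} = \tilde{P}_1 \times \prod_{j\ge 2}\tilde{G}_j$ via \cref{versal torsors} and \cref{products and strong isotropy}, taking $H_1 = \tilde{G}_1/p_1(Z)$ as the candidate quotient, and observing that $\ker\pi_1 \subseteq P$ so that \cref{covering thm1} pushes reduction of structure forward) coincides with the paper's. The gap is the step you yourself identify as ``the main obstacle'': the surjectivity of $H^1(K,G)\to H^1(K,H_1)$ for all $K$. This is asserted, not proved, and the mechanism you propose for proving it does not exist: $\ker\pi_1$ is (a central quotient of) the semisimple group $\prod_{j\ge 2}\tilde{G}_j$, which is neither central nor abelian, so non-abelian cohomology attaches no $H^2$-valued obstruction to lifting along $\pi_1$. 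Even after reducing to the genuinely cohomological part of the problem, lifting a class $[\delta]\in H^1(K,H_1)$ amounts to realizing a prescribed collection of classes in $H^2(K,Z(\tilde{G}_j))$ as Tits algebras of torsors of the other factors compatibly with the relations cut out by $Z$; this is a nontrivial realizability question for every simple type, and the ``multiplication by a unit mod $p$'' remark does not address it. Note also that surjectivity genuinely fails once the squarefree hypothesis is dropped: for $(\SL_p\times\SL_{p^2})/\Delta(\mu_p)\to \SL_{p^2}/\mu_p$, a degree-$p^2$ algebra of exponent $p$ and index $p^2$ gives a class of the target that does not lift, so any proof of surjectivity would have to locate precisely where squarefreeness enters --- which your sketch does not do.

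The paper avoids this surjectivity question entirely. Instead of lifting torsors from $H_1$ to $G$, it pulls back \emph{Brauer invariants} from $G_1=H_1$ to $G$: the homomorphism $\pi_1^\#:\Inv(G_1,\Br)\to\Inv(G,\Br)$ is injective (\cref{Br invariants 1}, because $Z\to p_1(Z)$ is surjective, hence injective on characters), and evaluation at a versal $G$-torsor is injective on $\Inv(G,\Br)$ (\cref{Br invariants 2}). Combined with \cref{P strongly isotropic} and the classification \cref{simple strongly isotropic}, this shows that the Brauer invariants controlling isotropy of $\overline{G_1}$-torsors vanish identically on $H^1(\ast,G_1)$, not merely on the image of $H^1(\ast,G)$. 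The squarefree hypothesis enters exactly here: by \cref{criterion for isotropy An}, isotropy in type $A_{n-1}$ is an \emph{index} condition, and only for squarefree $n$ is it equivalent to the vanishing of the \emph{exponent}-type invariants $f_p = ([A]-[D])^{n/p}$; this is what breaks in \cref{example no quotient}. To repair your argument you would either have to prove the surjectivity statement (a substantially stronger claim than anything the paper establishes) or replace it with an argument, like the paper's, that transports the conclusion from the image of $H^1(K,G)$ to all of $H^1(K,H_1)$.
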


If $G$ admits a simple strongly isotropic quotient then it is strongly isotropic by \cref{covering thm1}. For the rest of this section we assume $G$ is a strongly isotropic group which satisfies the hypothesis of \cref{strongly isotropic quotient} and prove that it admits a simple strongly isotropic quotient. Let $\pi: \tilde{G} \to G$ be the universal cover of $G$. Write $\tilde{G} = \prod_i \tilde{G}_i$ for some simply connected simple groups $\tilde{G}_1, \dots, \tilde{G_r}$  and denote by $Z=\ker\pi$ the fundamental group of $G$. Note that $Z$ may not be reduced. We denote the projection onto the $i$-th coordinate from $\tilde{G}$ by $p_i$. By \cref{versal torsors} $G$ admits reduction of structure to some maximal parabolic $P\subset G$. Since $\pi$ is central, \cite[Theorem V.22.6]{Borel1991} implies $P = \pi(\tilde{P})$ for some maximal parabolic of $\tilde{G}$. By \cref{products and strong isotropy} we may assume without loss of generality that $\tilde{P}$ is of the form:
$$\tilde{P} = \tilde{P}_1 \times \tilde{G}_2 \times\dots\times \tilde{G}_r \ , \ \ \text{where } \tilde{P}_1 \ \text{is a maximal proper parabolic of } \tilde{G}_1. $$
Let $G_1 = \tilde{G}_1 / p_1(Z)$ and denote by $\pi_1$ the canonical projection $G\to G_1$, $P_1 = \pi_1(P)$. We denote by $\overline{G_i}$ the adjoint group of $\tilde{G_i}$ for all $i$. Our goal is to prove that $G_1$ is strongly isotropic. The following lemma connects our assumptions to \cref{simple strongly isotropic}.

\begin{lemma}\label{P strongly isotropic}
Let $U \in H^1(K_0,G)$ be a versal torsor over a field extension $k\subset K_0$. Since $G$ admits reduction of structure to $P$, $q_*U = [\gamma_1]$ for some $[\gamma_1]\in H^1(K_0,P_1)$ by \cref{covering thm1}. The group ${}_{\gamma_1}\tilde{G}_1$ is strongly isotropic.
\end{lemma}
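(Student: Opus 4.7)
My plan is to prove the stronger statement that ${}_{\gamma_1}\tilde{G}_1$ admits reduction of structure to the proper parabolic subgroup ${}_{\gamma_1}\tilde{P}_1$; strong isotropy then follows from \cref{versal torsors}. The central idea is to work inside the \emph{strongly inner twist} ${}_{\gamma} G$ of $G$, where $\gamma \in H^1(K_0, P)$ is the reduction of $U$ to $P$ whose image under the natural map $H^1(K_0, P) \to H^1(K_0, P_1)$ is $\gamma_1$. Because $\gamma$ is a genuine $G$-cocycle, Serre's twisting bijection \cite[I.5.3]{Serre1997} transports the statement ``$G$ admits reduction of structure to $P$'' to ``${}_{\gamma} G$ admits reduction of structure to ${}_{\gamma} P$''.

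Given $L/K_0$ and $[\beta] \in H^1(L, {}_{\gamma_1}\tilde{G}_1)$, the adjoint action of $\gamma$ on $\tilde{G} = \prod_i \tilde{G}_i$ respects the product decomposition, so ${}_{\gamma} \tilde{G} = \prod_i {}_{\gamma} \tilde{G}_i$ with first factor ${}_{\gamma} \tilde{G}_1 = {}_{\gamma_1}\tilde{G}_1$. I would form $T' = ([\beta], 1, \ldots, 1) \in H^1(L, {}_{\gamma} \tilde{G})$, set $T = \pi_* T' \in H^1(L, {}_{\gamma} G)$, and use the transported reduction property to write $T = \iota_*[\gamma'']$ for some $[\gamma''] \in H^1(L, {}_{\gamma} P)$. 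I would then lift $[\gamma'']$ through the twisted central isogeny ${}_{\gamma} \tilde{P} \to {}_{\gamma} P$, whose kernel $Z$ is central and hence unchanged by the twist: the obstruction $\partial[\gamma''] \in H^2(L, Z)$ equals $\partial T$ by functoriality of connecting maps, and vanishes because $T$ already lifts to $T' \in H^1(L, {}_{\gamma} \tilde{G})$. Since the fibers of $H^1(L, {}_{\gamma} \tilde{P}) \to H^1(L, {}_{\gamma} P)$ and $H^1(L, {}_{\gamma} \tilde{G}) \to H^1(L, {}_{\gamma} G)$ are compatibly $H^1(L, Z)$-torsors, there is a unique lift $[\tilde{\gamma}''] \in H^1(L, {}_{\gamma} \tilde{P})$ with $\iota_*[\tilde{\gamma}''] = T'$. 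Projecting to the first factor then gives $p_{1,*}[\tilde{\gamma}''] \in H^1(L, {}_{\gamma_1}\tilde{P}_1)$, a reduction of $[\beta] = p_{1,*} T'$ to ${}_{\gamma_1}\tilde{P}_1$.

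The main obstacle is conceptual. One cannot apply Serre's bijection directly to $\tilde{G}_1$ twisted by $\gamma_1$, since $\gamma_1 \in H^1(K_0, G_1)$ may fail to lift to $H^1(K_0, \tilde{G}_1)$; the would-be ``strongly inner'' twist of $\tilde{G}_1$ is only inner, and strong isotropy is not preserved by general inner twists (compare, for instance, the classification of $D_5$-groups in \cref{simple strongly isotropic}). The key maneuver is to perform the twist one level up, at $G$, where $\gamma$ is a genuine cocycle; there the twisting bijection applies cleanly and preserves the property of admitting reduction of structure, so $G$'s strong isotropy transfers to ${}_{\gamma} G$. Once this transfer is in place, the reduction for $[\beta]$ is extracted by the routine lift-and-descent argument for central extensions.
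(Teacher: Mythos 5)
Your proposal is correct and follows essentially the same route as the paper: twist $G$ by the genuine $P$-valued cocycle $\gamma$ so that Serre's bijection transports "reduction of structure to $P$" to "${}_\gamma G$ reduces to ${}_\gamma P$", lift through the central isogeny ${}_\gamma\tilde G\to{}_\gamma G$ (your obstruction-plus-$H^1(L,Z)$-action argument is just a reproof of \cref{covering thm1}(1), which the paper cites), and then isolate the first factor via the product decomposition of $H^1(L,{}_\gamma\tilde P)$ and $H^1(L,{}_\gamma\tilde G)$. The only cosmetic slip is the word "unique" for the lift of $[\gamma'']$ hitting $T'$ (the fibers are $H^1(L,Z)$-orbits, not necessarily torsors), but existence is all you use.
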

\begin{proof}
For all $i\geq 2$ denote by $[\gamma_i]$ the image of $U$ under the natural map $H^1(K_0,G) \to H^1(K_0, \overline{G_i})$. Let $K_0\subset K$ be a field extension. By \cite[Proposition I.35]{Serre1997}, ${}_\gamma G$ admits reduction of structure to ${}_{\gamma}P$. \cref{covering thm1} implies ${}_\gamma \tilde{G}$ admits reduction of structure to ${}_\gamma \tilde{P}$ and so $H^1(K,{}_\gamma \tilde{P})\to H^1(K,{}_\gamma \tilde{G})$ is surjective. Since these cohomology sets split as products:
\begin{align*}
    H^1(K,{}_\gamma \tilde{P}) &= H^1(K,{}_{\gamma_1}\tilde{P_1})\times\prod_{i=2,\dots ,r}H^1(K,{}_{\gamma_i}\tilde{G_i})\\   H^1(K,{}_\gamma \tilde{G}) &=\prod_{i=1,\dots ,r}H^1(K,{}_{\gamma_i}\tilde{G_i}),
\end{align*}
We conclude that 
$H^1(K,{}_{\gamma_1}P_1)\to H^1(K,{}_{\gamma_1}G_1)$ is surjective.
\end{proof}

Fix $U,K_0, [\gamma_1]$ as in the statement of \cref{Br invariants 1} for the rest of this section. Using \cref{P strongly isotropic} and our classification of strongly isotropic simply groups we will deduce that certain invariants of $[\gamma_1]$ vanish. The invariants we use are called \emph{normalized Brauer invariants}. Let $A$ be an algebraic group over $k$. Denote by $H^1(\ast,A)$ the functor taking a field extension $k\subset K$ to the \emph{pointed set} $H^1(K,A)$. A normalized Brauer invariant of $A$-torsors is a morphism of functors between $H^1(\ast,A)$ and the Brauer group functor $K\mapsto \Br(K)$. The adjective "normalized" refers to the functor preserving the pointed set structure. From now on all invariants are assumed to be normalized. The set of Brauer invariants of $A$-torsors is denoted $\Inv(A,\Br)$. It comes with a natural abelian group structure of pointwise multiplication in the Brauer group. Any morphism of algebraic groups $f:A\to B$ induces a homomorphism $f^\#:\Inv(B,\Br) \to \Inv(A,\Br)$ given by precomposition with the induced functor $f_*:H^1(\ast,A)\to H^1(\ast,B)$. The following fact makes Brauer invariants useful for studying the relationship between $G$-torsors and $G_1$-torsors.

\begin{lemma}\label{Br invariants 1}
The induced homomorphism on Brauer invariants $\pi_1^\#:\Inv(G_1,\Br)\to \Inv(G,\Br)$ is injective.
\end{lemma}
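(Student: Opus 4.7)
The plan is to invoke Merkurjev's classification of normalized Brauer invariants of semisimple groups via characters of the fundamental group, and then compute $\pi_1^\#$ explicitly under this identification. For any semisimple group $H$ over $k$ with simply connected cover $\tilde{H} \to H$ and fundamental group $\mu_H := \ker(\tilde{H} \to H)$, the normalized Brauer invariants of $H$ are in natural bijection with the Galois-equivariant characters $\mu_H \to \Gm$: a character $\chi$ corresponds to the invariant sending $[\eta] \in H^1(K, H)$ to $\chi_*(\partial_H [\eta])$, where $\partial_H : H^1(K, H) \to H^2(K, \mu_H)$ is the connecting map of $1 \to \mu_H \to \tilde{H} \to H \to 1$. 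I would invoke this classification from Merkurjev's work on cohomological invariants of semisimple groups as the starting point.

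Under this identification, the key observation is that $\pi_1^\#$ becomes precomposition with the induced map of fundamental groups. The projection $p_1 : \tilde{G} \to \tilde{G}_1$ lifts $\pi_1$ to the simply connected covers, and restricts to a surjection $p_1|_Z : Z = \mu_G \twoheadrightarrow p_1(Z) = \mu_{G_1}$. Applying the functoriality of connecting maps to the commutative diagram
\[
\begin{tikzcd}
1 \arrow[r] & Z \arrow[r] \arrow[d, "p_1|_Z"'] & \tilde{G} \arrow[r, "\pi"] \arrow[d, "p_1"] & G \arrow[r] \arrow[d, "\pi_1"] & 1 \\
1 \arrow[r] & p_1(Z) \arrow[r] & \tilde{G}_1 \arrow[r] & G_1 \arrow[r] & 1
\end{tikzcd}
\]
shows that $\pi_1^\#$ sends a Galois-equivariant character $\chi : p_1(Z) \to \Gm$ to $\chi \circ p_1|_Z$.

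The conclusion is then immediate: since $p_1|_Z$ is surjective by the very definition of $p_1(Z)$, precomposition with it is injective on character groups. This yields injectivity of $\pi_1^\#$ on normalized Brauer invariants, and since constant invariants (those factoring through $\Br(k)$) pull back isomorphically under any morphism of groups, injectivity extends to the full invariant group $\Inv(G_1, \Br)$. The main point to justify carefully is the identification with Galois-equivariant characters and its naturality with respect to quotient maps of semisimple groups; both are standard consequences of Merkurjev's classification theorem, and no further group-theoretic input beyond the surjectivity of $p_1|_Z$ is needed.
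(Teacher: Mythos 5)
Your proposal is correct and follows essentially the same route as the paper: both identify normalized Brauer invariants with ($k$-rational) characters of the fundamental group via the Merkurjev--Blinstein/Sansuc classification, observe that $\pi_1^\#$ corresponds to precomposition with the surjection $p_1|_Z : Z \twoheadrightarrow p_1(Z)$, and conclude injectivity on character groups. The only cosmetic difference is your closing remark about constant invariants, which is unnecessary here since $\Inv(\cdot,\Br)$ already denotes normalized invariants in this context.
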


\begin{proof}
Let $Z^*$, $p_1(Z)^*$ be the group of characters defined over $k$ of $Z$ and $p_1(Z)$ respectively. By \cite[Example 31.20]{Knus1998} we have a commutative diagram (see also \cite[Theorem 2.4]{Blinstein2013}, \cite[Proposition 6.10]{Sansuc1981} for more details):
$$% https://tikzcd.yichuanshen.de/#N4Igdg9gJgpgziAXAbVABwnAlgFyxMJZABgBpiBdUkANwEMAbAVxiRAC0A9AKhAF9S6TLnyEUZAIxVajFmzQB9CQAp2ASh79BIDNjwEiAJlJTq9Zq0QgAOtYCSYGsoCKpWwCEATmq1C9oo3JpczkrWwcnAHE3ay8fPmkYKABzeCJQADNPCABbJAlqHAgkYxAcOiwGNkgwVjNZSxA6BWdfECzcpDIy4sQAZkKKqqsaupkLNmbIto68xFKipAHx0JAAR05bAGIZ7LmCnq7qBjoAIxgGAAVhfTEQTyxkgAscEHqJqwz+Cj4gA
\begin{tikzcd}
Z^* \arrow[rr, "a_G"]                      &  & {\Inv(G,\Br)}                   \\
p_1(Z)^* \arrow[rr, "a_{G_1}"] \arrow[u, "f"'] &  & {\Inv(G_1,\Br)} \arrow[u, "{\pi_1}^\#"]
\end{tikzcd} $$
Here $f$ is the pull back map induced from $p_1$ which is injective since ${p_1}_{|Z}:Z\to p_1(Z)$ is surjective. Since $a_G$ and $a_{G_1}$ are isomorphisms it follows that $\pi_1^\#$ is injective.
\end{proof}

The following lemma follows directly from \cite[Theorem 2.2]{Blinstein2013} and the proof of \cite[lemma 5.4.6]{Gille2006}. Recall $U\in H^1(K_0,G)$ denotes a versal $G$-torsor.
\begin{lemma}\label{Br invariants 2}
The evaluation homomorphism $\theta: \Inv(G,\Br) \to \Br(K_0), f\mapsto f(K_0)(U)$ is injective.
\end{lemma}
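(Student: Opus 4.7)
The plan is to prove injectivity of $\theta$ by a standard specialization argument using the versality of $U$. Explicitly, I want to show that if $f \in \Inv(G,\Br)$ satisfies $f(K_0)(U)=0$, then for any field extension $k\subset L$ and any $[\gamma]\in H^1(L,G)$, one has $f(L)(\gamma)=0$.

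First I would fix the geometric setup. Write $U$ as the generic fiber of a $G$-torsor $Q \to X$ over a smooth irreducible $k$-variety $X$ with function field $K_0 = k(X)$. The key input from Blinstein--Merkurjev (Theorem 2.2) is that Brauer invariants of a reductive group are \emph{unramified}: the class $f(K_0)(U) \in \Br(K_0)$ lies in the image of $\Br(X') \to \Br(K_0)$ for some non-empty open $X' \subset X$. This uses purity for the Brauer group on regular schemes together with the description of Brauer invariants in terms of characters of the fundamental group $Z$, which in our context is exactly what is set up in the diagram appearing in the proof of \cref{Br invariants 1}.

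Next, I would invoke the versality of $U$ in the precise form used in the proof of \cite[Lemma 5.4.6]{Gille2006}: for every infinite field extension $L/k$ and every $[\gamma]\in H^1(L,G)$, after shrinking $X'$ further if necessary, there exists an $L$-rational point $x\in X'(L)$ whose pullback $x^{*}(Q_L)$ represents $[\gamma]$. By functoriality of cohomological invariants,
\[
f(L)(\gamma) \;=\; x^{*}\bigl(f(K_0)(U)\bigr) \;\in\; \Br(L).
\]
Since $f(K_0)(U)=0$ by hypothesis, its extension to $\Br(X')$ is zero (by injectivity of $\Br(X')\hookrightarrow \Br(K_0)$ from purity), and hence $f(L)(\gamma)=0$. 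For finite $L$ the Brauer group vanishes, so nothing needs to be proved there; for an arbitrary infinite $L/k$ one reduces $[\gamma]$ to a finitely generated subextension before applying the above.

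The main obstacle is really the unramifiedness step, i.e. showing that $f(K_0)(U)$ automatically lies in $\Br(X')\subset \Br(K_0)$; this is where the structure theorem for Brauer invariants of reductive groups (via the identification with $Z^{*}$ modulo torsors that are trivialized Zariski-locally on $X$) enters, and it is precisely the content of the cited Blinstein--Merkurjev theorem. Once this is granted, the specialization argument is immediate, so the entire proof amounts to assembling the two quoted references rather than doing new calculations.
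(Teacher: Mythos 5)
Your proposal is correct and takes essentially the same route as the paper, whose entire proof is to cite \cite[Theorem 2.2]{Blinstein2013} together with the specialization argument from the proof of \cite[lemma 5.4.6]{Gille2006} --- exactly the two ingredients you assemble (unramifiedness of $f(K_0)(U)$ over an open $X'\subset X$, then evaluation at an $L$-point of $X'$ supplied by versality). The only small caveat is attributional: the identity $f(L)(\gamma)=x^{*}\bigl(f(K_0)(U)\bigr)$ is not mere functoriality but is itself part of the specialization machinery you are quoting, so it deserves the same citation rather than being waved through.
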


We now have all the ingredients needed to finish the proof of \cref{strongly isotropic quotient}.
\begin{proposition}
The group $G_1$ is strongly isotropic.
\end{proposition}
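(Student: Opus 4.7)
The goal is to show that $G_1$ appears in the classification of simple strongly isotropic groups provided by \cref{simple strongly isotropic}. By \cref{P strongly isotropic}, the simply connected group ${}_{\gamma_1}\tilde{G}_1$ is strongly isotropic; since it is simply connected, \cref{simple strongly isotropic} identifies it as one of: $\Sp_{2n}$ over $K_0$ (type $C$), $\Spin(q)$ for a ten-dimensional quadratic form $q$ over $K_0$ with trivial discriminant and split Clifford algebra (type $D_5$), or $\SL_m(D_0)$ with $m > 1$ and $m\deg(D_0) = n$ (type $A$). Consequently $\tilde{G}_1$ is of Dynkin type $A_{n-1}$, $C_n$, or $D_5$.

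Combining the isomorphism $a_{G_1}: p_1(Z)^* \cong \Inv(G_1,\Br)$ with the injections from \cref{Br invariants 1} and \cref{Br invariants 2}, we obtain an injection
\[
p_1(Z)^* \hookrightarrow \Br(K_0), \quad \chi \mapsto \chi_*\partial_{G_1}[q_*U],
\]
where $\partial_{G_1}$ is the connecting map of $1 \to p_1(Z) \to \tilde{G}_1 \to G_1 \to 1$. For each $\chi$, this Brauer class is a Tits-algebra-type invariant of ${}_{q_*U}\tilde{G}_1 = {}_{\gamma_1}\tilde{G}_1$, which is explicitly computable from the identification of ${}_{\gamma_1}\tilde{G}_1$ above. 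The plan is to use this injection, together with the computed values of the Tits invariants, to constrain $p_1(Z)$ sufficiently that $G_1$ falls into the classification of \cref{simple strongly isotropic}.

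In types $C$ and $D_5$, the strongly isotropic simply connected group ${}_{\gamma_1}\tilde{G}_1$ is split (respectively has only trivial Tits algebras), so every Brauer class $\chi_*\partial_{G_1}[q_*U]$ vanishes and injectivity forces $p_1(Z) = 1$, hence $G_1 = \tilde{G}_1$; a parallel Brauer invariant analysis at the $G$-level using the larger character group $Z^*$ and evaluation at $U$ then shows that $\tilde{G}_1$ itself satisfies the structural conditions of \cref{simple strongly isotropic} over $k$ (splitness in type $C$, trivial discriminant and split Clifford algebra in type $D_5$). In type $A$, writing $\tilde{G}_1 = \SL_{m'}(D')$ and $G_1 = \SL_{m'}(D')/\mu_d$ with $m'\deg(D') = n$ squarefree, \cref{not simply connected An} identifies the boundary as $\beta := [D_0] - [D'_{K_0}] \in \Br(K_0)[d]$, and injectivity gives $\exp(\beta) = d$ in $\Br(K_0)$. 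A primary-decomposition analysis exploiting that $v_p(\deg D') = v_p(n/m') = 0$ for every prime $p \mid m'$ (where the squarefree hypothesis on $n$ is essential) together with the bound $\ind(D_0) = n/m < n$ coming from $m > 1$ produces a prime $p \mid m'$ with $p \nmid d$, exactly the condition of \cref{simple strongly isotropic} for $G_1 = \SL_{m'}(D')/\mu_d$ to be strongly isotropic. The main obstacle is the careful primary-decomposition bookkeeping in the type $A$ case and, in types $C$ and $D_5$, the descent argument using $G$-level Brauer invariants needed to conclude the structural conditions on $\tilde{G}_1$ over $k$ rather than merely over $K_0$.
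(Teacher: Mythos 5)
Your overall framework --- \cref{P strongly isotropic}, the classification \cref{simple strongly isotropic}, and the injection obtained by composing $a_{G_1}$ with \cref{Br invariants 1} and \cref{Br invariants 2} --- is the same machinery the paper uses, but two of your steps have genuine gaps. First, in types $C$ and $D_5$ you claim $\chi_*\partial_{G_1}(q_*U)$ vanishes because ${}_{\gamma_1}\tilde{G}_1$ has trivial Tits algebras. But $\chi_*\partial_{G_1}(q_*U)$ computes the \emph{difference} between the Tits algebras of the twisted group and those of the untwisted $\tilde{G}_1$ over $K_0$: for example, if $p_1(Z)=\mu_2$ in type $C$ then $G_1=\PGSp(A_0,\sigma_0)$ and the connecting map sends $(A',\sigma')$ to $[A']-[A_0]$, not to $[A']$. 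So the vanishing you need already presupposes that $\tilde{G}_1$ itself is split (resp.\ has trivial Tits algebras) --- exactly what you defer to your second, vaguely described step. Moreover that second step cannot be carried out with Brauer invariants of $G$: normalized invariants evaluated at $U$ detect only the twisting class, never the structure of $\tilde{G}_1$ over $k$. The missing ingredient is that $\tilde{G}_1$ itself admits reduction of structure to $\tilde{P}_1$ (because $\tilde{G}$ admits reduction of structure to $\tilde{P}=\tilde{P}_1\times\prod_{i\geq 2}\tilde{G}_i$, by \cref{covering thm1} and \cref{products and strong isotropy}), hence is strongly isotropic, so that \cref{simple strongly isotropic} pins down its structure over $k$; this is what the paper invokes when it writes ``since $\tilde{G}_1$ is strongly isotropic.''

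Second, in type $A$ the data you extract --- $\exp(\beta)=d$ and $m>1$ --- do not suffice to produce a prime $p\mid m'$ with $p\nmid d$. Take $n=6$, $m'=2$, $\deg(D')=3$, $d=6$ (so $G_1=\overline{G_1}=\PGL_2(D')$), and suppose ${}_{\gamma_1}\tilde{G}_1\cong \SL_3(D_0)$ with $\ind(D_0)=2$: then $\beta=[D_0]-[D'_{K_0}]$ has exponent exactly $6=d$, $m=3>1$, and ${}_{\gamma_1}\tilde{G}_1$ is strongly isotropic, yet every prime of $m'=2$ divides $d$. What rules this scenario out is none of your stated hypotheses but the fact that $[\gamma_1]=\pi_{1,*}(U)$ reduces to the parabolic $P_1$ and hence is isotropic, whereas the corresponding degree-$6$ algebra $M_3(D_0)$ has no ideal of reduced dimension $3$ and so gives an anisotropic $\PGL_2(D')$-torsor by \cref{criterion for isotropy An}. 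The paper's type-$A$ argument uses exactly this isotropy of $\pi_{1,*}(U)$ (via \cref{covering thm1}) in combination with \cref{Br invariants 1} and \cref{Br invariants 2}; you need to add that input to your primary-decomposition bookkeeping, after which the argument does close up.
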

\begin{proof}
We proceed by separating into cases according to the type of $G_1$. In each case we characterize isotropy of torsors in $H^1(K,\overline{G_1})$ by the vanishing of certain Brauer invariants and prove that these invariants vanish on $\Ad_*(H^1(K,G_1))$ using strong isotropy of ${}_{\gamma_1}\tilde{G}_1$ and \cref{Br invariants 1}. Using \cref{adjoint thm1}, we conclude $G_1$ is strongly isotropic.
\begin{itemize}
    \item[Type $C_n$:] Since $\tilde{G}_1$ is strongly isotropic, \cref{simple strongly isotropic} implies it is isomorphic to $\Sp_{2n}$. By \cite[29.22]{Knus1998} torsors $[\gamma]\in H^1(K,\overline{G_1})$ classify central simple algebras of degree $2n$ with symplectic involution $(A,\sigma)$ and ${}_\gamma \tilde{G}_1$ is isomorphic to the symplectic group of $(A,\sigma)$. Let $f\in \Inv(\overline{G_1},\Br)$ be defined by $f(K)([\gamma]) = [A]$. By \cref{simple strongly isotropic}, for any $[\gamma] \in H^1(K,\overline{G_1})$ we have $f(K)([\gamma]) = 0$ if and only if ${}_\gamma \tilde{G}_1$ is strongly isotropic if and only if $[\gamma]$ is split. Since ${}_{\gamma_1} \tilde{G}_1$ is strongly isotropic \cref{Br invariants 2} implies $(\Ad\circ \pi_1)^{\#}(f) = 0$. Therefore by \cref{Br invariants 1} $\Ad^\#(f)=0$ and so $G_1$ is strongly isotropic by \cref{adjoint thm1}.
    
    \item[Type $D_5$: ]  Since $\tilde{G}_1$ is strongly isotropic, \cref{simple strongly isotropic} implies it is isomorphic to $\Spin(q)$ for some 10-dimensional quadratic form $q$ with trivial discriminant and split Clifford algebra. By \cite[Page 409]{Knus1998} torsors $[\gamma]\in H^1(K,\overline{G_1})$ classify central simple algebras of degree $10$ with orthogonal involution $(A,\sigma)$ such that $\disc(\sigma) = 1$. By \cite[Theorem 26.15]{Knus1998} ${}_\gamma \tilde{G}_1$ is isomorphic to the spin group of $(A,\sigma)$. Let $f,c_{\pm} \in \Inv(\overline{G_1},\Br)$ be defined by 
    $$f(K)([\gamma]) = [A], \ \  c_{\pm}(K)([\gamma])= C_{\pm} (A,\sigma),$$ 
    where $C_\pm (A,\sigma)$ are the components of the Clifford algebra of $A$ (see \cite[Theorem 8.10]{Knus1998} for the relevant definitions). By \cref{simple strongly isotropic}, since ${}_{\gamma_1}\tilde{G}_1$ is strongly isotropic \cref{Br invariants 2} implies $$(\Ad\circ \pi_1)^{\#}(f) = (\Ad\circ \pi_1)^{\#}(c_\pm)= 0.$$  Therefore by \cref{Br invariants 1} $$\Ad^{\#}(f) = \Ad^{\#}(c_\pm)= 0.$$ In particular torsors in $\Ad_*(H^1(K,G_1))$ correspond to pairs $(A,\sigma) =  (M_{10}(K),\sigma_q)$ where $\sigma_q$ is adjoint to a 10-dimensional quadratic form $q$ with trivial discriminant and split Clifford algebra. By \cite[Proposition 2.8]{Lam2004} such quadratic forms are always isotropic. Therefore $G_1$ is strongly isotropic by \cref{adjoint thm1} and \cref{criterion for isotropy D5}.
    
    \item[Type $A_{n-1}$:]  Since $\tilde{G}_1$ is strongly isotropic, \cref{simple strongly isotropic} implies it is isomorphic to $\SL_m(D)$ for some central division algebra $D$ with $m\neq 1$ and $m\ind(D)=n$. By assumption $m$ and $\ind(D)$ are squarefree and coprime. Torsors $[\gamma]\in H^1(K,\overline{G_1})$ classify central simple algebras $A$ with $\deg(A)= n$. For any prime divisor $p$ of $n$ define a Brauer invariant of $\overline{G_1}$-torsors: 
    $$f_p(K)([\gamma])= ([A] - [D])^{{n}/{p}}.$$ 
    By \cref{criterion for isotropy An} $[\gamma]$ is isotropic if and only if there exists a prime divisor $p$ of $m$ such that $f_p(K)([\gamma]) = 0$. $ \pi_{1,*}(U)= [\gamma_1]$ is isotropic by \cref{covering thm1}. Therefore \cref{Br invariants 2} implies that for some prime divisor of $m$ we have $(\Ad\circ \pi_1)^\#(f_p) = 0$ and by \cref{Br invariants 1} $\Ad^\#(f_p) = 0$. We conclude that $G_1$ is strongly isotropic by \cref{adjoint thm1}.
    
\end{itemize}
\end{proof}

\begin{remark}
Note that when $\tilde{G}_1$ is of type $D_5$, $C_n$ or $A_{p-1}$ with $p$ prime,  our arguments imply $G_1 = \tilde{G}_1$ or equivalently $p_1(Z)=\{1\}$. This follows from \cref{simple strongly isotropic} since any proper simple quotient of $\tilde{G}_1$ admits anisotropic torsors.
\end{remark}
%%%%%%%%%%%%%%%%%%%%%%%%%%%%%%%%%%%%%%%%%%%%%%%%%%55

\section{Strongly isotropic semisimple groups of type  \texorpdfstring{$A$}{A}}
The next theorem gives another partial classification of strongly isotropic semisimple groups. We consider groups which are of "opposite" type to the groups covered by the previous theorem. We allow only factors of type $A_n$ and without any restriction on $n$.  The complexities that arise when considering factors of type $A_{n-1}$ with $n$ not squarefree are illustrated well by the complicated divisibility criterion we obtain. To make the computations easier we work with products of copies of $\GL{n}$ instead of $\SL_n$. \cref{reduction to semisimple} shows this makes no difference as far as strong isotropy is concerned. 
Let $n_1,\dots,n_r$ be positive integers and let $C\subset \Gm^r$ be a algebraic subgroup. Define:
$$G_C = (\GL{n_1}\times \dots\times \GL{n_r})/C. $$
We denote by $S_C$ the derived subgroup of $G_C$. There is a canonical isomorphism:
$$S_C \cong (\SL_{n_1}\times \dots\times \SL_{n_r})/C\cap (\mu_{n_1} \times\dots \times\mu_{n_r}).$$
Identify the group of characters $(\Gm^r)^*$ with $\bZ^r$. Let $M_C\subset \bZ^r$ be the submodule of all characters vanishing on $C$. By \cite[Theorem A.1]{Cernele2015} for any field extension $k\subset K$, $H^1(K,G_C)$ is in bijection with the following set of isomorphism classes of $r$-tuples of central simple algebras over $K$:
$$\Big\{(A_1,\dots,A_r) \mid \begin{matrix}
\text{For all } 1\leq j\leq r : & \deg(A_j) = n_j \\
\text{For all } (k_1,\dots,k_r) \in M_C:  & \sum{k_j [A_j]}=0 
\end{matrix}\Big\}$$
We will show how to determine if $S_C$ is strongly isotropic using $M_C$. We will need the existence of "sufficiently independent" central simple algebras. This is the content of the next lemma.

\begin{lemma}\label{existence of disjoint algebras}
Let $k$ be a field. For any natural numbers $r, n_1,\dots,n_r$ there exists a field extension $k\subset K$ and central division algebras $A_1,\dots,A_r$ over $K$ such that $\deg(A_i)=n_i$ and for any integers $k_1,\dots,k_r$:
$$\ind( \sum_j k_j [A_j] ) = \prod_j \frac{n_j}{\gcd(n_j,k_j)}.$$
\end{lemma}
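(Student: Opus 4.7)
The plan is to reduce to prime-power degrees via primary decomposition and then build the algebras iteratively, in the spirit of the cyclic construction used in the proof of \cref{existence of division} (itself based on \cite[Lemma 5.1]{Merkurjev1996}). Writing $n_i = \prod_p p^{v_p(n_i)}$, I would first argue that it suffices, for each prime $p$ separately, to produce division algebras $A_{i,p}$ of degree $p^{v_p(n_i)}$ over a common field $K$ satisfying the analogous index formula. Setting $A_i := \bigotimes_p A_{i,p}$ then gives a central simple algebra of degree $n_i$ (since the primary degrees are pairwise coprime), and the identity $\ind(\sum_j k_j[A_j]) = \prod_j n_j/\gcd(n_j,k_j)$ follows prime-by-prime: each primary part $\sum_j k_j[A_{j,p}]$ has index $\prod_j p^{v_p(n_j)-v_p(\gcd(n_j,k_j))}$, and multiplying over $p$, using that indices of coprime primary parts multiply, reassembles the full product.

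I would then treat the prime-power case by induction on $r$, keeping a prime $p$ fixed and writing $e_i = v_p(n_i)$. The base case $r=1$ is \cite[Lemma 5.1]{Merkurjev1996} applied with $n = d = p^{e_1}$, producing a cyclic division algebra of index and exponent $p^{e_1}$ over a suitable extension. For the inductive step, assuming $A_1,\dots,A_{r-1}$ have been built over a field $K_{r-1}$, I would apply Merkurjev's construction once more with a fresh pair of parameters (a cyclic extension generated by permuting new indeterminates, together with a new uniformizer $t_r$) to produce a cyclic algebra $A_r$ of index and exponent $p^{e_r}$ over $K_r := K_{r-1}(t_r,\dots)$. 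The key feature of this construction is that the $t_r$-adic valuation on $K_r$ is a brand new discrete valuation at which $A_1,\dots,A_{r-1}$ are unramified and $A_r$ is totally ramified of order $p^{e_r}$; after $r$ steps one obtains $r$ independent discrete valuations $v_1,\dots,v_r$ on $K_r$ such that $A_i$ is ramified only at $v_i$, with ramification of order $p^{e_i}$.

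The index formula is then verified by a tame ramification computation. At each $v_j$ the Brauer class $\sum_i k_i[A_i]$ has ramification of order equal to that of $k_j[A_j]$ at $v_j$, namely $p^{e_j}/\gcd(p^{e_j},k_j)$; the Witt-type exact sequence for the Brauer group of a discretely valued field shows that the index is at least the product of these local ramification orders. The reverse inequality is immediate from $\ind(k_j[A_j]) \mid p^{e_j}/\gcd(p^{e_j},k_j)$ and subadditivity of the index under sums. The main obstacle is the bookkeeping in the inductive step: one must ensure that at each stage all previously constructed $A_i$ remain division algebras after passing to $K_r$, and that the newly introduced valuation is truly independent of the previous ones. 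This is precisely the technical content of Merkurjev's construction as used in \cref{existence of division}, and carries over here without essential modification once the right tower of fields has been chosen.
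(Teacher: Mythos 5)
Your route is genuinely different from the paper's. The paper takes $(A_1,\dots,A_r)$ to be the tuple corresponding to a \emph{versal} torsor of $\PGL_{n_1}\times\dots\times\PGL_{n_r}$: the upper bound $\ind(\sum_j k_j[A_j])\mid \prod_j n_j/\gcd(n_j,k_j)$ is standard, and the matching lower bound is obtained by computing the \emph{exponent} exactly, using the isomorphism $\Inv(G,\Br)\cong\bigoplus_j\bZ/n_j\bZ$ together with the injectivity of evaluation at a versal torsor (\cref{Br invariants 2}), and then invoking $\exp\mid\ind$. This avoids all valuation-theoretic bookkeeping. Your proposal instead builds the algebras explicitly by iterating the cyclic construction of \cite[Lemma 5.1]{Merkurjev1996}, as in \cref{existence of division}. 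The primary-decomposition reduction at the start is fine: classes of coprime prime-power index have multiplicative indices, so the formula reassembles prime by prime once the $A_{i,p}$ exist over a common field with each primary formula intact.

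There is, however, a genuine gap at the key step. You assert that because $\sum_i k_i[A_i]$ has ramification of order $m_j:=p^{e_j}/\gcd(p^{e_j},k_j)$ at each of $r$ ``independent'' discrete valuations, the Witt sequence forces $\ind(\sum_i k_i[A_i])\geq\prod_j m_j$. As a general principle this is false: a quaternion algebra over $\bQ$ ramifies at arbitrarily many independent places and still has index $2$. A single valuation only gives $\ind\geq m_j$, i.e.\ $\ind\geq\max_j m_j$, not the product. The product lower bound requires the valuations to be \emph{nested}: one must complete at $v_r$, use the refined index formula $\ind(\alpha)=m_r\cdot\ind(\widetilde{\alpha}|_{\kappa_{\chi}})$ where $\kappa_{\chi}$ is the degree-$m_r$ cyclic extension of the residue field cut out by the ramification character $k_r\partial_{v_r}[A_r]$, check that $\kappa_{\chi}$ still carries $A_1,\dots,A_{r-1}$ and the remaining valuations with their indices undiminished (here one uses that $\kappa_\chi$ sits inside a purely transcendental extension of $K_{r-1}$), and induct on $r$. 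Your construction does produce such a nested tower, so the argument is repairable, but the inductive residue computation over $\kappa_\chi$ is precisely the content you defer to ``bookkeeping,'' and it is where the proof actually lives; without it the stated lower bound does not follow.
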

\begin{proof}
Consider $G = \times^r_{j=1}\PGL_{n_j}$ as an algebraic group over $k$. Let $U = (U_1,\dots,U_r)$ be a versal torsor in $H^1(K,G) = \times^r_{j=1} H^1(K,\PGL_{n_j})$ for some field extension $k\subset K$. For all $1\leq j\leq r$, let $A_j$ be the central simple algebra of degree $n_j$ corresponding to $U_j$ as in \cite[29.10]{Knus1998}.
By \cite[Theorem 5.5]{Saltman1999} and \cite[Proposition 4.5.8]{Gille2006}, for any integers $k_1,\dots,k_r$ we have:
$$\exp( \sum_j k_j [A_j] ) \mid \ind( \sum_j k_j [A_j] ) \mid \prod_j \frac{\deg(A_j)}{\gcd(\deg(A_j),k_j)}  = \prod_j \frac{n_j}{\gcd(n_j,k_j)}.$$ Therefore it suffices to prove:
$$\exp( \sum_j k_j [A_j] ) = \prod_j \frac{n_j}{\gcd(n_j,k_j)}.$$
For all $1\leq j\leq r$ we define a Brauer invariant $f_j$ as follows. For any field extension $k\subset L$ and torsor $[\gamma]\in H^1(L,G)$ corresponding to an $r$-tuple of central simple algebras $(B_1,\dots,B_r)$ we define:
$$f_j(L)([\gamma]) = [B_j]\in \Br(L).$$
By \cite[Example 31.21]{Knus1998} (see also \cite[Theorem 2.4]{Blinstein2013}, \cite[Proposition 6.10]{Sansuc1981} for more details), we have an isomorphism between the group of characters of the fundamental group of $G$ and $\Inv(G,\Br)$. Using this isomorphism, one easily checks the following morphism is an isomorphism: $$F:\bigoplus_j \bZ/n_j\bZ \rightarrow \Inv(G,\Br),\ \ (k_1,\dots,k_r)\mapsto  \sum_j k_j f_j .$$ 
By \cref{Br invariants 2} the order of $F(k_1,\dots,k_r)(U)=  \sum_j k_j [A_j]$ in $\Br(K)$ is the same as the order of $(k_1,\dots,k_r)$ in $\bigoplus_j \bZ/n_j\bZ$ which is $ \prod_j \frac{n_j}{\gcd(n_j,k_j)}$ by elementary group theory. 
\end{proof}

We can now obtain a concrete criteria for strong isotropy of $S_C$ in terms of $C$.
The proof is inspired by the construction of ``generic" algebras in \cite[Definition 4.4]{Karpenko1999}.

\begin{theorem}\label{Strongly isotropic GLn's}
  The group $S_C$ is strongly isotropic if and only if for some $1\leq j\leq r$ there exists $(k_1,\dots,k_r)\in M_C$ such that:
$$(*):\ \ n_j \not \vert \ \frac{n_j}{\gcd(1+k_j, n_j)} \prod_{s\neq j} \frac{n_s}{\gcd(k_s, n_s)}. $$
\end{theorem}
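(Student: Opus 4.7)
The plan is to reformulate strong isotropy of $G_C$ (equivalent to that of $S_C$ by Proposition \ref{reduction to semisimple}) as a statement about indices of algebras in the corresponding tuples, and then prove the two directions via an index estimate and an index-reduction construction. Throughout let $\mathrm{RHS}(k) := \frac{n_j}{\gcd(1+k_j, n_j)} \prod_{s\neq j} \frac{n_s}{\gcd(k_s,n_s)}$. By \cite[Theorem A.1]{Cernele2015}, torsors in $H^1(K,G_C)$ are parametrized by constrained tuples $(A_1,\ldots,A_r)$ as in the paragraph preceding the theorem. Since $C$ is central, parabolics of $G_C$ descend from parabolics of $\prod_s \GL{n_s}$; by Lemma \ref{products and strong isotropy} the maximal proper ones are $(\prod_{s \neq j}\GL{n_s} \times P_{j,d})/C$, where $P_{j,d}\subset \GL{n_j}$ stabilizes a $d$-dimensional subspace, and reduction of structure of $(A_1,\ldots,A_r)$ to such a parabolic amounts to $\ind(A_j)\mid d$. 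Combining with Lemma \ref{versal torsors}, $G_C$ is strongly isotropic iff there exist $j$ and $d<n_j$ such that $\ind(A_j)\mid d$ for every constrained tuple over every extension.

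\textbf{Forward direction.} If $(*)$ holds for some $j$ with witness $(k_1,\ldots,k_r)\in M_C$, rewrite $[A_j]=(1+k_j)[A_j]+\sum_{s\neq j}k_s[A_s]$ using $\sum_s k_s[A_s]=0$. Sub-multiplicativity of index and the bound $\ind(m[B]) \mid \deg(B)/\gcd(m, \deg B)$ from \cite[Proposition 4.5.8]{Gille2006} (as used in Lemma \ref{existence of disjoint algebras}) give $\ind(A_j)\mid \mathrm{RHS}(k)$. Combined with $\ind(A_j)\mid n_j$, this yields $\ind(A_j)\mid \gcd(n_j,\mathrm{RHS}(k)) < n_j$, so $G_C$ admits reduction to the corresponding parabolic.

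\textbf{Converse direction.} Assume $\neg(*)$ for every $j$, i.e., $n_j\mid \mathrm{RHS}(k)$ for all $j$ and all $(k)\in M_C$. I construct a constrained tuple of division algebras as follows. Take $B_1,\ldots,B_r$ from Lemma \ref{existence of disjoint algebras} over a field extension $K_0/k$. Fix a $\bZ$-basis $v_1,\ldots,v_t$ of $M_C$, set $C_i:=\sum_l (v_i)_l [B_l]$, and let $L:=K_0\bigl(\prod_{i=1}^t SB(C_i)\bigr)$. Each $C_i$ splits over $L$, so all $M_C$-constraints hold for $(B_1,\ldots,B_r)$ over $L$. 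The iterated Merkurjev-Panin-Wadsworth index-reduction formula for products of Severi-Brauer varieties, combined with the generic index formula of Lemma \ref{existence of disjoint algebras}, gives
\begin{align*}
\ind_L(B_j) = \gcd_{(m_i) \in \bZ^t} \ind_{K_0}\Bigl(B_j + \sum_i m_i C_i\Bigr) = \gcd_{(k) \in M_C} \mathrm{RHS}(k),
\end{align*}
since $(m_i) \mapsto \sum_i m_i v_i$ parametrizes $M_C$. By hypothesis every $\mathrm{RHS}(k)$ is divisible by $n_j$, while $\mathrm{RHS}(0)=n_j$, so the gcd equals $n_j$. Thus each $B_j$ remains a division algebra over $L$, producing a constrained tuple of division algebras and contradicting strong isotropy. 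The main obstacle is the converse: the hypothesis $\neg(*)$ is tailored precisely so that every term in the index-reduction gcd is a multiple of $n_j$, forcing the $B_j$'s to stay anisotropic over $L$.
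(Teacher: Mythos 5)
Your proposal is correct and follows essentially the same route as the paper: the forward direction via the rewriting $[A_j]=(1+k_j)[A_j]+\sum_{s\neq j}k_s[A_s]$ and the index bound, and the converse via the generic tuple of Lemma \ref{existence of disjoint algebras} together with the Schofield--Van den Bergh index-reduction formula over the function field of the product of Severi--Brauer varieties of a basis of $M_C$. The only cosmetic difference is that you re-derive the isotropy criterion from the parabolic structure of $G_C$ directly, where the paper invokes \cref{criterion for isotropy An} and \cref{adjoint thm1}.
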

\begin{proof}
By \cref{reduction to semisimple} it suffices to show $G_C$ is strongly isotropic if and only if for some $1\leq j\leq r$ there exists $(k_1,\dots,k_r)\in M_C$ such that $(*)$ is satisfied. For any $(k_1,\dots,k_r)\in M_C$ and tuple of algebras $(A_1,\dots,A_r)$ corresponding to a torsor in $H^1(K,G_C)$ we have by \cite[Theorem 5.5]{Saltman1999}:
\begin{align*}
    \ind([A_j]) &= \ind\big( [A_j] + \sum_s k_s[A_s]\big)  \mid \frac{n_j}{\gcd(1+k_j, n_j)} \prod_{s\neq j} \frac{n_s}{\gcd(k_s, n_s)}.
\end{align*}
Therefore if $(*)$ holds for some $(k_1,\dots,k_r)\in M_C$ then $A_j$ is not a division algebra and $G_C$ is strongly isotropic by \cref{criterion for isotropy An} and \cref{adjoint thm1}. We assume $(*)$ does not hold for any $(k_1,\dots,k_r)\in M_C$ and show $G_C$ is not strongly isotropic. By \cref{existence of disjoint algebras} after extending scalars we may choose a tuple $(A_1,\dots,A_r)$ of central division algebras such that for any integers $k_1,\dots,k_r$ we have:
$$\ind( \sum_j k_j [A_j] )  = \prod_j \frac{n_j}{\gcd(n_j,k_j)}.$$
Choose a basis $\{(k_{i 1},\dots,k_{i r})\}_{i=1,\dots,d}$ for $M_C$ and denote by $Y_i$ the Severi-Brauer variety of $A^{\otimes k_{i1}}_1\otimes\dots\otimes A^{\otimes k_{ir}}_r$ for all $1\leq i\leq d$. We put $K = k(Y_1\times\dots\times Y_d)$ and $B_j = (A_j)_K$ for all $j$. Since $k(Y)$ splits $A^{\otimes k_{i1}}_1\otimes\dots\otimes A^{\otimes k_{ir}}_r$ for all $i$, the tuple $(B_1,\dots,B_r)$ corresponds to a torsor in $H^1(K,G_C)$. By \cite[Theorem 2.3]{Schofield1992} we have:
\begin{align*}
    \ind([B_j]) &= \underset{a_1,\dots,a_d\in\bZ}{\gcd} \Big\{\ind\big( [A_j] + \sum_i a_i\sum_s k_{is}[A_s]\big)\Big\}\\
                &= \underset{(k_1,\dots,k_r)\in M_C}{\gcd} \Big\{\ind\big( [A_j] + \sum_s k_{s}[A_s]\big)\Big\}\\
                &=\underset{(k_1,\dots,k_r)\in M_C}{\gcd} \Big\{\frac{n_j}{\gcd(1+k_j, n_j)} \prod_{s\neq j} \frac{n_s}{\gcd(k_s, n_s)}.\Big\}.
\end{align*}
Since $(*)$ does not hold for any $(k_1,\dots,k_r)\in M_C$ it follows that $B_j$ is a division algebra for all $j$ and so the torsor corresponding to $(B_1,\dots,B_r)$ is anisotropic by  \cref{criterion for isotropy An} and \cref{adjoint thm1}.
\end{proof}

It is easy to use \cref{Strongly isotropic GLn's} to construct examples showing \cref{strongly isotropic quotient} fails without restrictions on the root system of $G$.

\begin{example}\label{example no quotient}
Let $\Delta: \Gm \to \Gm^2$ be the diagonal embedding and let $p$ be a prime . Define $G_C=\GL{p}\times \GL{p^2}/C$ where $C = \Delta(\Gm)$. We have $(k_1,k_2) = (1,-1)\in M_C$ and:
$$\frac{n_2}{\gcd(1+k_2, n_2)} \frac{n_1}{\gcd(k_1, n_1)}=\frac{p^2}{\gcd(0, p^2)} \frac{p}{\gcd(1, p)} = p. $$
Therefore $S_C\cong \SL_{p}\times \SL_{p^2}/\Delta(\mu_p)$ is strongly isotropic by \cref{Strongly isotropic GLn's}. Since $\SL_{p}\times \SL_{p^2}/\Delta(\mu_p)$ admits no strongly isotropic simple quotients this shows \cref{strongly isotropic quotient} fails without the restrictions on the root system.

\end{example}

%%%%%%%%%%%%%%%%%%%%%%%%%%%%%%%%%%%%%%%%%%%%%%%%

\printbibliography

@Article{Berhuy2005,
  author  = {Berhuy, G. and Reichstein, Z.},
  journal = {Advances in Mathematics},
  title   = {{On the notion of canonical dimension for algebraic groups}},
  year    = {2005},
  issn    = {0001-8708},
  month   = dec,
  number  = {1},
  pages   = {128--171},
  volume  = {198},
  doi     = {10.1016/j.aim.2004.12.004},
  url     = {https://linkinghub.elsevier.com/retrieve/pii/S0001870805000447},
}

@Article{Merkurjev1996,
  author  = {Merkurjev, A. S. and Panin, I. A. and Wadsworth, A. R.},
  journal = {K-Theory},
  title   = {{Index reduction formulas for twisted flag varieties, I.}},
  year    = {1996},
  issn    = {1573-0514},
  month   = nov,
  number  = {6},
  pages   = {517--596},
  volume  = {10},
  doi     = {10.1007/BF00537543},
  url     = {http://www.portico.org/Portico/article?article=pgg197h1958},
}

@Book{Lam2004,
  author    = {Lam, T. Y.},
  publisher = {American Mathematical Society},
  title     = {{Introduction to Quadratic Forms over Fields}},
  year      = {2004},
  address   = {Providence, Rhode Island},
  isbn      = {9780821810958},
  month     = dec,
  series    = {Graduate Studies in Mathematics},
  volume    = {67},
  doi       = {10.1090/gsm/067},
  url       = {http://www.ams.org/gsm/067},
}

@Article{Sansuc1981,
  author  = {Sansuc, J.-J.},
  journal = {Journal f{\"{u}}r die reine und angewandte Mathematik (Crelles Journal)},
  title   = {{Groupe de Brauer et arithm{\'{e}}tique des groupes alg{\'{e}}briques lin{\'{e}}aires sur un corps de nombres.}},
  year    = {1981},
  issn    = {0075-4102},
  month   = sep,
  number  = {327},
  pages   = {12--80},
  volume  = {1981},
  doi     = {10.1515/crll.1981.327.12},
  url     = {https://www.degruyter.com/document/doi/10.1515/crll.1981.327.12/html},
}

@Article{Tits1990,
  author  = {Tits, J.},
  journal = {Journal of Algebra},
  title   = {{Strongly inner anisotropic forms of simple algebraic groups}},
  year    = {1990},
  issn    = {0021-8693},
  month   = jun,
  number  = {2},
  pages   = {648--677},
  volume  = {131},
  doi     = {10.1016/0021-8693(90)90201-X},
  url     = {https://linkinghub.elsevier.com/retrieve/pii/002186939090201X},
}

@Article{Blinstein2013,
  author  = {Blinstein, S. and Merkurjev, A.},
  journal = {Algebra {\&} Number Theory},
  title   = {{Cohomological invariants of algebraic tori}},
  year    = {2013},
  issn    = {1944-7833},
  month   = oct,
  number  = {7},
  pages   = {1643--1684},
  volume  = {7},
  doi     = {10.2140/ant.2013.7.1643},
  url     = {http://msp.org/ant/2013/7-7/p04.xhtml},
}

@incollection{Tits1966,
author = {Tits, J.},
booktitle = {Algebraic groups and discontinuous subgroups},
chapter = {1},
pages = {33--62},
publisher = {American Mathematical Society},
title = {{Classification of algebraic semi-simple groups}},
year = {1966}
}

@Article{Rosenlicht1956,
  author  = {Rosenlicht, M.},
  journal = {American Journal of Mathematics},
  title   = {{Some Basic Theorems on Algebraic Groups}},
  year    = {1956},
  issn    = {0002-9327},
  month   = apr,
  number  = {2},
  pages   = {401},
  volume  = {78},
  doi     = {10.2307/2372523},
  url     = {https://www.jstor.org/stable/2372523?origin=crossref},
}

@Article{Merkurjev1998,
  author  = {Merkurjev, A. S. and Panin, I. A. and Wadsworth, A. R.},
  journal = {K-Theory},
  title   = {{Index Reduction Formulas for Twisted Flag Varieties, II}},
  year    = {1998},
  issn    = {1573-0514},
  month   = jun,
  number  = {2},
  pages   = {101--196},
  volume  = {14},
  doi     = {10.1023/A:1007793218556},
  url     = {http://www.portico.org/Portico/article?article=pgg1zfp93hg},
}

@Article{Borel1965,
  author  = {Borel, A. and Tits, J.},
  journal = {Publications Math{\'{e}}matiques de l'IH{\'{E}}S},
  title   = {{Groupes r{\'{e}}ductifs}},
  year    = {1965},
  pages   = {55--151},
  volume  = {27},
}

@Book{GaribaldiS.MerkurjevA.Serre2003,
  author    = {{G}aribaldi, {S}. and {M}erkurjev, {A}. and {S}erre, J. P.},
  publisher = {American Mathematical Society},
  title     = {{Cohomological invariants in Galois cohomology}},
  year      = {2003},
}

@Article{Cernele2015,
  author  = {Cernele, S. and Reichstein, Z.},
  journal = {Pacific Journal of Mathematics},
  title   = {{Essential dimension and error-correcting codes}},
  year    = {2015},
  issn    = {0030-8730},
  month   = dec,
  number  = {1-2},
  pages   = {155--179},
  volume  = {279},
  doi     = {10.2140/pjm.2015.279.155},
  url     = {http://msp.org/pjm/2015/279-1/p08.xhtml},
}

@Book{Knus1998,
  author    = {Knus, M.-A. and Merkurjev, A. and Rost, M. and Tignol, J. P.},
  publisher = {American Mathematical Society},
  title     = {{The Book of Involutions}},
  year      = {1998},
  address   = {Providence, Rhode Island},
  isbn      = {9780821809044},
  month     = jun,
  series    = {Colloquium Publications},
  volume    = {44},
  doi       = {10.1090/coll/044},
  url       = {http://www.ams.org/coll/044},
}

@Article{Karpenko1999,
  author   = {Karpenko, N. A.},
  journal  = {Israel Journal of Mathematics},
  title    = {{Three theorems on common splitting fields of central simple algebras}},
  year     = {1999},
  issn     = {0021-2172},
  pages    = {125--141},
  volume   = {111},
  abstract = {Let A1, . . . , An be central simple algebras over a field F. Suppose that we possess information on the Schur indexes of some tensor products of (some tensor powers of) the algebras. What can be said (in general) about possible degrees of finite field extensions of F splitting the algebras? In Part I, we prove a positive result of that kind. In Part II, we prove a negative result. In Part III, we develop a general approach.},
  doi      = {10.1007/BF02810681},
  file     = {:C$backslash$:/Users/danny/Downloads/Karpenko1999{_}Article{_}ThreeTheoremsOnCommonSplitting.pdf},
}

@Book{Saltman1999,
  author    = {Saltman, D. J.},
  publisher = {American Mathematical Society; CBMS Regional Conference Series in Mathematics},
  title     = {{Lectures on Division Algebras}},
  year      = {1999},
  file      = {:C$backslash$:/Users/danny/AppData/Local/Temp/(CBMS{_}94) David J. Saltman - Lectures on division algebras-AMS (1998).pdf},
}

@Article{Schofield1992,
  author  = {Schofield, A. and Bergh, M. V. D.},
  journal = {Transactions of the American Mathematical Society},
  title   = {{The Index of a Brauer Class on a Brauer-Severi Variety}},
  year    = {1992},
  issn    = {0002-9947},
  month   = oct,
  number  = {2},
  pages   = {729},
  volume  = {333},
  doi     = {10.2307/2154058},
  file    = {:C$backslash$:/Users/danny/Downloads/S0002-9947-1992-1061778-X.pdf},
  url     = {https://www.jstor.org/stable/2154058?origin=crossref},
}

@Article{Blanchet1991,
  author  = {Blanchet, A.},
  journal = {Communications in Algebra},
  title   = {{Function fields of generalized brauer-severi varieties}},
  year    = {1991},
  issn    = {0092-7872},
  month   = jan,
  number  = {1},
  pages   = {97--118},
  volume  = {19},
  doi     = {10.1080/00927879108824131},
  file    = {:C$backslash$:/Users/danny/Downloads/00927879108824131.pdf},
  url     = {http://www.tandfonline.com/doi/abs/10.1080/00927879108824131},
}

@Book{Serre1997,
  author    = {Serre, J. P.},
  publisher = {Springer Berlin Heidelberg},
  title     = {{Galois Cohomology}},
  year      = {1997},
  address   = {Berlin, Heidelberg},
  isbn      = {978-3-540-42192-4},
  series    = {Springer Monographs in Mathematics},
  doi       = {10.1007/978-3-642-59141-9},
  file      = {:C$backslash$:/Users/danny/OneDrive/Documents/UBC/Independent studies/Galois{_}cohomology/Serre{_}Galois{_}Cohomology.pdf},
  url       = {http://link.springer.com/10.1007/978-3-642-59141-9},
}

@Article{Duncan2015,
  author        = {Duncan, A. and Reichstein, Z.},
  title         = {{Versality of algebraic group actions and rational points on twisted varieties}},
  year          = {2015},
  issn          = {1056-3911},
  number        = {3},
  pages         = {499--530},
  volume        = {24},
  abstract      = {We formalize and study several competing notions of versality for an action of a linear algebraic group on an algebraic variety X. Our main result is that these notions of versality are equivalent to various statements concerning rational points on twisted forms of X (existence of rational points, existence of a dense set of rational points, etc.). We give applications of this equivalence in both directions to study versality of group actions and rational points on algebraic varieties. We obtain similar results on p-versality for a prime integer p. An appendix, containing a letter from J.-P. Serre, puts the notion of versality in a historical perspective.},
  archiveprefix = {arXiv},
  arxivid       = {1109.6093},
  booktitle     = {Journal of Algebraic Geometry},
  doi           = {10.1090/S1056-3911-2015-00644-0},
  eprint        = {1109.6093},
  file          = {:C$backslash$:/Users/danny/Downloads/S1056-3911-2015-00644-0.pdf},
  isbn          = {2502172012},
}

@Book{Borel1991,
  author    = {Borel, A.},
  publisher = {Springer New York},
  title     = {{Linear Algebraic Groups}},
  year      = {1991},
  address   = {New York, NY},
  isbn      = {978-1-4612-6954-0},
  series    = {Graduate Texts in Mathematics},
  volume    = {126},
  doi       = {10.1007/978-1-4612-0941-6},
  file      = {:C$backslash$:/Users/danny/Downloads/Borel1991{_}Book{_}LinearAlgebraicGroups.pdf},
  url       = {http://link.springer.com/10.1007/978-1-4612-0941-6},
}

@Book{Milne2017,
  author    = {Milne, J. S.},
  publisher = {Cambridge University Press},
  title     = {{Algebraic Groups}},
  year      = {2017},
  address   = {Cambridge},
  isbn      = {9781316711736},
  number    = {5},
  volume    = {4},
  booktitle = {Cambridge press},
  doi       = {10.1017/9781316711736},
  issn      = {1573-8795},
  pages     = {463--482},
  url       = {http://ebooks.cambridge.org/ref/id/CBO9781316711736},
}

@Book{Springer1998,
  author    = {Springer, T. A.},
  publisher = {Birkh{\"{a}}user Boston},
  title     = {{Linear Algebraic Groups}},
  year      = {1998},
  address   = {Boston, MA},
  isbn      = {978-0-8176-4839-8},
  doi       = {10.1007/978-0-8176-4840-4},
  file      = {:C$backslash$:/Users/danny/Downloads/(Progress in Mathematics) T.A. Springer - Linear algebraic groups-Birkh{"{a}}user Boston (1998).pdf},
  url       = {http://link.springer.com/10.1007/978-0-8176-4840-4},
}

@Book{Jacobson1996,
  author    = {Jacobson, N.},
  publisher = {Springer Berlin Heidelberg},
  title     = {{Finite-Dimensional Division Algebras over Fields}},
  year      = {1996},
  address   = {Berlin, Heidelberg},
  isbn      = {978-3-540-57029-5},
  doi       = {10.1007/978-3-642-02429-0},
  file      = {:C$backslash$:/Users/danny/Downloads/Jacobson1996{_}Book{_}Finite-DimensionalDivisionAlge.pdf},
  url       = {http://link.springer.com/10.1007/978-3-642-02429-0},
}

@Book{Gille2006,
  author    = {Gille, P. and Szamuely, T.},
  publisher = {Cambridge University Press},
  title     = {{Central Simple Algebras and Galois Cohomology}},
  year      = {2006},
  address   = {Cambridge},
  isbn      = {9780511607219},
  doi       = {10.1017/CBO9780511607219},
  file      = {:C$backslash$:/Users/danny/Downloads/Gille-Szamuely.pdf},
  url       = {http://ebooks.cambridge.org/ref/id/CBO9780511607219},
}

@Misc{Borovoi2010,
  author       = {M. Borovoi},
  howpublished = {MathOverflow},
  title        = {Splitting of a division algebra with an involution of second kind},
  year         = {2010},
  eprint       = {https://mathoverflow.net/q/24561},
  url          = {https://mathoverflow.net/q/24561},
}

\end{document}